\newcommand{\bdd}{\mbox{$\partial$}}
\newcommand{\define}[1]{\emph{\textbf{#1}}}
\newcommand{\redG}{G_\mathcal{S}}
\newcommand{\lvlG}{G_{\mathcal{T}}}
\newcommand{\redS}{\mathcal{S}}
\newcommand{\lvlS}{\mathcal{T}}
\newtheorem{thm}{Theorem}[section]
\newtheorem{defn}[thm]{Definition}
\newtheorem{lem}[thm]{Lemma}
\newtheorem{prop}[thm]{Proposition}
\newtheorem{cor}[thm]{Corollary}
\newtheorem{claim}[thm]{Claim}
\newtheorem{rem}[thm]{Remark}
\newtheorem{conj}{Conjecture}
\begin{document}
\title{Cabling Conjecture for Small Bridge Number}
\date{\today}
\author{Colin Grove}
\maketitle

\begin{abstract}
Let $k\subset S^3$ be a nontrivial knot. The Cabling Conjecture of
Francisco Gonz\'alez-Acu\~na and Hamish Short \cite{GonzalesAcunaShort86}
posits that $\pi$-Dehn
surgery on $k$ produces a reducible manifold if and only if $k$ is a
$(p,q)$-cable knot and the surgery slope $\pi$ equals $pq$.
We extend the work of James Allen Hoffman
\cite{Hoffman95} to prove the Cabling Conjecture for knots with
bridge number up to $5$.
\end{abstract}

\section{Introduction}
Let $k\subset S^3$ be a knot, and let $N(k)\subset S^3$ be the open
neighborhood of $k$. A \define{slope} is an isotopy class of nontrivial
simple closed curves on $\bdd \overline{N(k)}$. If $\pi$ is a slope, 
\define{$\pi$-Dehn surgery} on $k$ consists of drilling out $N(k)$, and
gluing a solid torus $T$ to $M = S^3 \setminus N(k)$ such that a meridian
curve on $\bdd T$ is mapped to $\pi$.

The usefulness of Dehn surgery in the study of $3$-manifolds is established
by the following theorem.
\begin{thm}[Lickorish-Wallace, \cite{Wallace60} and \cite{Lickorish62}]
 Any closed, orientable, connected $3$-manifold can be obtained from $S^3$
 by a finite collection of Dehn surgeries.
\end{thm}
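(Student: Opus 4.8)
The plan is to prove the (stronger) statement that every closed, orientable, connected $3$-manifold $M$ is obtained from $S^3$ by finitely many Dehn surgeries with coefficients $\pm 1$, following Lickorish. I would begin from a Heegaard splitting $M = V \cup_h W$, where $V$ and $W$ are two copies of the standard genus-$g$ handlebody and $h\colon \partial W \to \partial V$ is an orientation-reversing homeomorphism; such a splitting exists because a regular neighborhood of the $1$-skeleton of a triangulation of $M$ is a handlebody and so is its complement. Since $S^3$ has a genus-$g$ Heegaard splitting for every $g$ (take $S^3 = B^3 \cup B^3$ and stabilize), write $S^3 = V \cup_{h_0} W$ with the same handlebodies and a fixed ``standard'' gluing $h_0$; after stabilizing we may assume the two genera agree. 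Then $\phi := h \circ h_0^{-1}$ is a mapping class of the genus-$g$ surface $\Sigma = \partial V$, and by the Dehn--Lickorish theorem it factors as a finite product $\phi = \tau_{c_n}^{\epsilon_n} \circ \cdots \circ \tau_{c_1}^{\epsilon_1}$ of Dehn twists $\tau_{c_i}$ about simple closed curves $c_i \subset \Sigma$, with signs $\epsilon_i = \pm 1$.

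Next I would realize this factorization geometrically. Using a collar neighborhood $\Sigma \times [0,n]$ of the Heegaard surface, one can write $S^3$ as $V$, followed by $n$ copies of the product cobordism $\Sigma \times I$ glued end-to-end by the identity, followed by $W$ glued on by $h_0$; composing all the gluing maps recovers $S^3 = V \cup_{h_0} W$. If instead the $i$-th identity gluing is replaced by $\tau_{c_i}^{\epsilon_i}$, the total gluing map becomes $\phi \circ h_0 = h$, so the resulting manifold is $M$. The point is that changing the gluing of a single product collar $\Sigma \times I$ from the identity to $\tau_c^{\epsilon}$ is a purely local modification: it removes a tubular neighborhood of $c$ (pushed into the interior of that collar, hence an unknotted-in-$S^3$ solid torus) and reglues its boundary torus, which is precisely a Dehn surgery along $c$, viewed as a knot in $S^3$, with coefficient $-\epsilon$ relative to the surface framing. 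Performing these $n$ surgeries at once on the disjoint link $c_1 \sqcup \cdots \sqcup c_n \subset S^3$ (placing the $c_i$ at distinct heights in the collar) therefore produces $M$.

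The main obstacle is the last assertion, namely the lemma that regluing a product collar $\Sigma \times I$ by a Dehn twist along $c$ agrees with $(-\epsilon)$-framed surgery along $c$: one must identify the solid torus $N(c) \subset \Sigma \times I$, verify that after regluing it is again a solid torus, and check that its new meridian is the surface-framed longitude of $c$ twisted once --- this is where all the orientation and framing bookkeeping concentrates, and it is the crux of the argument. The two remaining ingredients --- existence of Heegaard splittings, and the Dehn--Lickorish generating set for $\mathrm{Mod}(\Sigma_g)$ --- I would quote as standard. I would also note, as an alternative that sidesteps mapping class groups (Wallace's route): since $\Omega_3^{SO} = 0$ every closed oriented $M^3$ bounds a compact oriented $W^4$, and after turning a handle decomposition of $W$ upside down and trading handles one may assume $W$ is built from a single $0$-handle with only $1$- and $2$-handles; as $D^4$ together with $k$ one-handles is $\natural^k (S^1 \times D^3)$, whose boundary $\#^k(S^1 \times S^2)$ is $0$-surgery on a $k$-component unlink in $S^3$, the remaining $2$-handles convert into Dehn surgeries along a framed link in $S^3$ disjoint from that unlink, exhibiting $M = \partial W$ as surgery on $S^3$.
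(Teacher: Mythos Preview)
The paper does not prove this theorem at all: it is quoted as a classical background result (with citations to Wallace and Lickorish) solely to motivate the study of Dehn surgery, and no argument is given. So there is no ``paper's own proof'' to compare against.

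That said, your sketch is the standard Lickorish argument and is essentially correct. The three ingredients --- existence of Heegaard splittings, generation of $\mathrm{Mod}(\Sigma_g)$ by Dehn twists, and the lemma that inserting a single Dehn twist into a product collar amounts to an integral Dehn surgery along the twist curve --- are exactly the steps in Lickorish's proof, and you have correctly isolated the third as the place where the bookkeeping lives. One small caution: the surgery coefficient you obtain is $\pm 1$ \emph{relative to the surface framing} of $c_i$, which in general is not the Seifert framing in $S^3$; so what drops out immediately is a presentation by \emph{integral} surgeries, and the reduction to coefficients $\pm 1$ (in the usual sense) requires an extra step. Your alternative outline via $\Omega_3^{SO}=0$ and a handle decomposition of a bounding $4$-manifold is likewise the standard Wallace/Kirby route and is fine as stated.
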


If $M$ is a $3$-manifold such that every embedded $2$-sphere in $M$ bounds
an embedded $3$-ball, $M$ is \define{irreducible}. An embedded $2$-sphere
in $M$ that bounds no $3$-ball is called a \define{essential} (or
a \define{reducing sphere}), and
the existence of an essential sphere makes $M$ \define{reducible}.
One naturally wonders about the relationship between these two important
tools, i.e., when does Dehn surgery
produce a reducible manifold?

Suppose $k\subset S^3$ is the unknot, $m$ the meridian slope on
$\bdd \overline{N(k)}$,
and $l$ the longitudinal slope (i.e. the slope which bounds a disk
in $S^3 \setminus N(k)$). Then $m$-Dehn surgery produces $S^3$,
$l$-Dehn surgery produces $S^2 \times S^1$ (see Figure 
\ref{fig:TrivReducible}), and Dehn surgery with any
other slope produces a lens space.
 \begin{figure}[h]
 \centering
  \subfloat[][$D_1\subset T$.]{\includegraphics[width=0.4\textwidth]{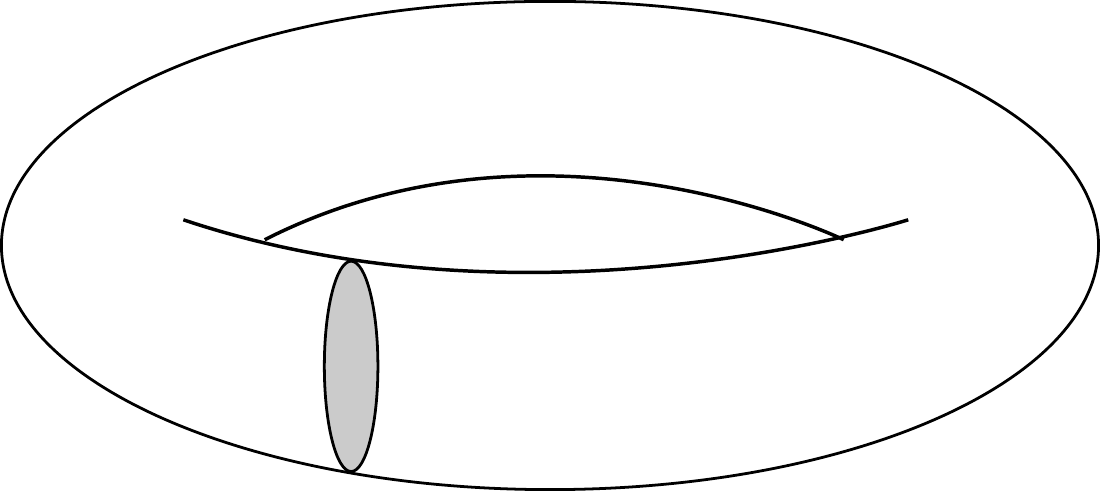}
                 \label{fig:TrivDiskInside}}
  \hspace{1cm}
  \subfloat[][$D_2\subset S^3 \setminus N(unknot)$.]{\includegraphics[width=0.4\textwidth]{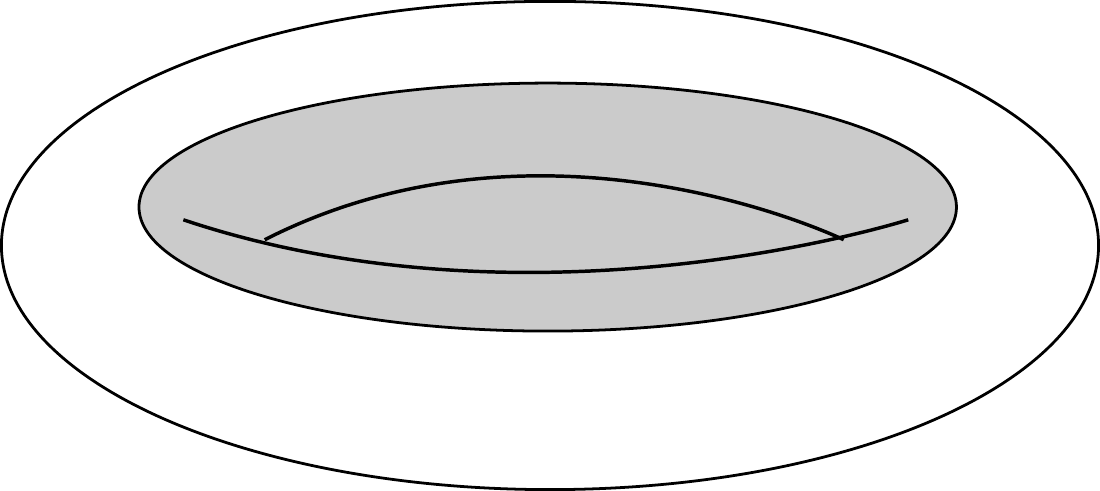}
                 \label{fig:TrivDiskOutside}}
  \caption[The essential sphere obtained from longitudinal surgery on the unknot.]{After longitudinal surgery, $D_1\cup D_2$ is an essential sphere.}
  \label{fig:TrivReducible}
\end{figure}

The Cabling Conjecture addresses when surgery on a nontrivial knot
produces a reducible manifold.
It should first be noted that David Gabai showed that only separating
 spheres need be considered, by showing that surgery on a nontrivial
 knot produces
neither $S^2 \times S^1$ (thereby proving the Property R Conjecture)
nor any manifold with an $S^2 \times S^1$ summand (proving the Poenaru
Conjecture) \cite{Gabai87}.

The Cabling Conjecture (presented in 1983) makes a claim about the specific
circumstances under which Dehn surgery on a nontrivial $k\subset S^3$
produces a reducible manifold.
\begin{conj}[Gonz\'ales-Acu\~na and Short, \cite{GonzalesAcunaShort86}]
 Let $k\subset S^3$ be a nontrivial knot. Then $\pi$-Dehn surgery on $k$
 produces a reducible manifold if and only if $k$ is a $(p,q)$-cable knot
 and the surgery slope $\pi$ equals $pq$.
\end{conj}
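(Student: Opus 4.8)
The plan is to establish the two implications separately, with essentially all the work in the converse. The \emph{forward} direction is a classical construction: if $k$ is a $(p,q)$-cable knot, then $M = S^3\setminus N(k)$ contains the cabling annulus $A$, one of whose boundary curves is the slope $pq$ on $\bdd\overline{N(k)}$; after $pq$-surgery that curve bounds a meridian disk of the surgery torus, so $A$ caps off, and together with a meridian disk of the companion solid torus it closes up to an essential $2$-sphere. Concretely, $pq$-surgery on a $(p,q)$-cable produces the connected sum of a lens space with the manifold obtained by the induced surgery on the companion, which is visibly reducible. I would dispatch this first, since it needs only the definition of a cable knot and the geometry of a solid-torus neighborhood.

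For the \emph{converse}, suppose $\pi$-surgery on a nontrivial $k$ yields a reducible manifold $M'$, and write $M' = M\cup_{\bdd} T$ with $T$ the surgery solid torus. By Gabai's theorem quoted above, $M'$ has a \emph{separating} reducing sphere $\redS$; since $k$ is nontrivial, $M$ is irreducible, so $\redS$ can be isotoped to meet $T$ in a nonempty, minimal family of $n\ge 1$ meridian disks, leaving a planar surface $P = \redS\cap M$ properly embedded in $M$ with all boundary curves of slope $\pi$. An innermost-disk argument rules out $n = 1$, and a finer analysis of small $n$ (as in Scharlemann's work on reducible surgeries) either already forces $k$ to be a cable with $\pi = pq$ or moves us to the range $n\ge 3$, where the combinatorial machinery is needed.

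The heart of the proof is that machinery. After a minimality isotopy, the intersection of $P$ with a suitable second surface (a meridian disk of $T$, and, in the refinement used here, a level sphere of $k$ in bridge position) consists of arcs, which organize into two labelled ``fat-vertex'' graphs, $\redG$ on $\redS$ and $\lvlG$ on the auxiliary sphere $\lvlS$. One then exploits: the parity and labelling structure of the edges; Euler-characteristic counts, available because both surfaces have genus $0$; and the decisive fact that a \emph{Scharlemann cycle} in $\redG$ yields, from its disk face together with a band in $\bdd T$, an essential annulus or M\"obius band in $M$ of boundary slope $\pi$, whence --- because $\bdd M$ sits in $S^3$ --- $k$ is a cable knot with cabling slope $pq = \pi$, the desired conclusion. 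So the problem reduces to showing that the combinatorics of $\redG$ and $\lvlG$ must produce a Scharlemann cycle, or else contradict the irreducibility of $M$ or Gabai's theorem.

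Forcing that Scharlemann cycle is where I expect the real obstacle, and in full generality it is genuinely unresolved: for unbounded $n$ the CGLS-type Euler-characteristic estimates, the analysis of great webs, and the Gordon--Luecke bound on the meridional intersection number have not been sharpened enough to eliminate every configuration, so extra geometric input on $k$ is needed to close the argument. The route I would take --- and the one carried out in this paper --- is to bring in the \define{bridge number} $b(k)$: a thin-position argument controls how $\redS$ can meet the surgery solid torus in terms of $b(k)$, which caps the valence of the vertices of $\lvlG$ and the size of the great webs, making the Euler-characteristic inequalities strong enough to force a Scharlemann cycle. Carrying this out for $b(k)\le 5$ yields the conjecture for those knots; removing the bridge-number restriction altogether --- i.e., pushing the combinatorial input to unbounded complexity --- is the crux that remains, and is what a complete proof of the conjecture as stated would require.
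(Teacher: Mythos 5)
The statement you were asked to prove is the Cabling Conjecture itself, which is open; the paper does not prove it (and could not), so there is no ``paper proof'' to match. What the paper actually establishes is the easy forward direction (in the introduction, via the cabling annulus) and Theorem \ref{thm:CCFiveBridgeKnots}, the case of bridge number at most $5$. Your proposal is honest about this --- you explicitly concede that closing the converse for arbitrary intersection number is unresolved --- so as a proof of the stated conjecture it has an unavoidable, and self-acknowledged, gap: the entire converse beyond the low-bridge-number case is missing. Your forward direction is essentially the paper's argument, with one slip: the reducing sphere is the cabling annulus $A$ capped off by \emph{two} meridian disks of the filling solid torus (both components of $\bdd A$ have slope $pq$ on $\bdd \overline{N(k)}$), not one disk of the surgery torus together with a meridian disk of the companion solid torus; the connected-sum description you give is correct, but the sphere you describe is not the one that exhibits it.

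On the converse, the mechanism you propose is also not the one the machinery supports. You write that the goal is to force a Scharlemann cycle in the graph on the reducing sphere and that such a cycle yields an essential annulus or M\"obius band of slope $\pi$, whence $k$ is cabled. That is backwards relative to how these arguments run: the cabling conclusion comes only from $p=2$ (Hoffman's Proposition quoted in Section 2), and one therefore assumes $p>2$ and argues toward a contradiction. A Scharlemann cycle in $G_Q$ is not the prize but a guaranteed starting point (Theorem \ref{thm:SchCycleExistence}, due to Gordon--Luecke); it does not produce a cabling annulus, and in the trivially-surgered side it is used homologically (Theorem \ref{thm:AllTypesTorsion}: representing all $V_Q$-types would force torsion in $H_1(S^3)$). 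The paper's actual contradiction for $b\leq 5$ comes from combining the failure of $G_P$ to represent some type with the great-web divisibility results of Zufelt (Theorems \ref{thm:GreatWebDivisibility} and \ref{thm:SchCycleNotWeb}) to force $q>2v+2\geq 10$, i.e.\ bridge number at least $6$ --- not from a thin-position bound ``capping the valence'' so as to force a Scharlemann cycle, as you describe. So even restricted to the $b\leq 5$ case, your outline would need to be reorganized around the ``not all types are represented'' dichotomy and the web-size counting, rather than around producing a Scharlemann cycle and reading off a cable structure from it.
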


In order to understand the Cabling Conjecture we must define cable knots.
A nontrivial knot $k\subset S^3$ is a \define{$(p,q)$-torus knot} if $k$
can be isotoped to a $\frac{p}{q}$-curve in the boundary of an
unknotted solid torus $T\subset S^3$.
Let $e:T \to S^3$ be an embedding of $T$ into $S^3$. If $k\subset \bdd T$ is
a $(p,q)$-torus knot with $p > 1$, then $e(k)$ is a
\define{$(p,q)$-cable knot}.

One direction of the Cabling Conjecture is known: a $pq$-Dehn surgery on
a $(p,q)$-cable knot produces a reducible manifold. To see this, let $T$
be the solid (possibly knotted) torus on which the $(p,q)$-cable knot
$k$ lies. Cutting $\bdd T$ along $k$ produces an annulus $A$, and
both components of $\bdd A$ represent the same slope on
$\bdd \overline{N(k)}$. It follows that $\pi$-Dehn surgery, with slope
$\pi$ equal to the slope represented by $\bdd A$, will produce a reducible
manifold, since meridian disks $D_1$, $D_2$ of the filling torus $V$ can
be found such that $\bdd D_1$ and $\bdd D_2$ are precisely $\bdd A$.
Thus $A \cup D_1 \cup D_2$ is a sphere, and essential since none of
$D_1$, $D_2$, or $A$ are boundary parallel to $\bdd V$. $A$ is called the
\define{cabling annulus}.
 \begin{figure}[!h]
  \centering
  \includegraphics[width=0.5\textwidth]{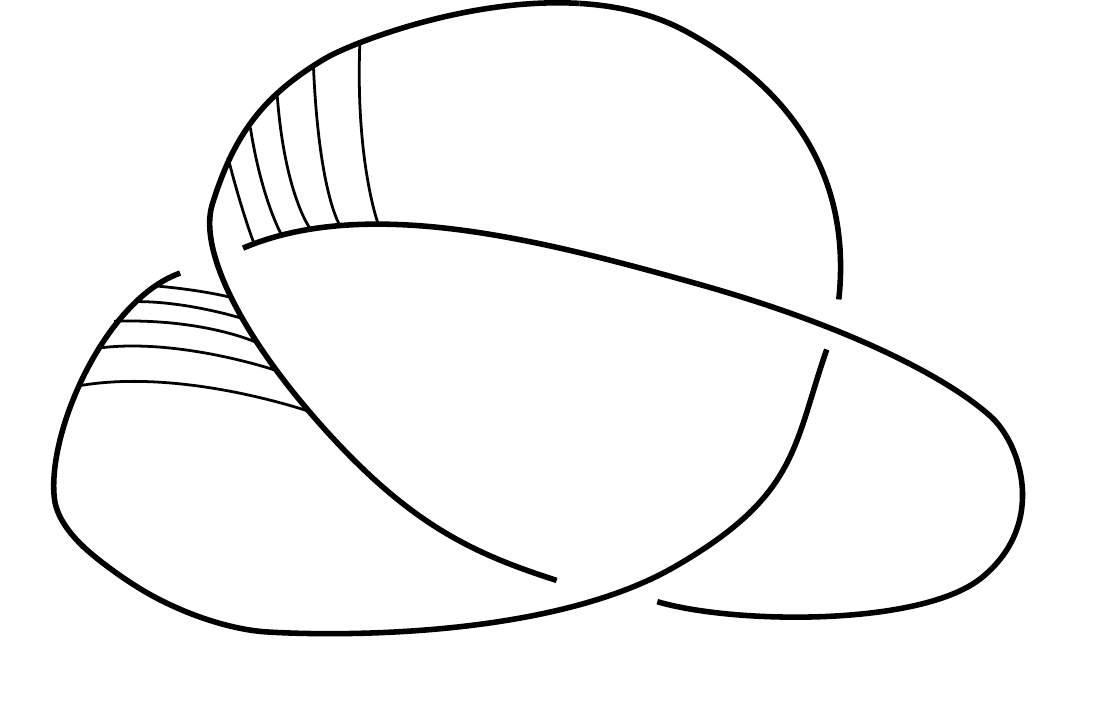}
  \caption{A section of the cabling annulus on the trefoil knot.}
  \label{fig:CablingAnnulus}
 \end{figure}

To see that the appropriate slope $\pi$ is $pq$, we first note that
since the boundary of $A$ on $\bdd T$ is $k$, each component of $\bdd A$
traverses $k$ along a longitude once, so each component of $\bdd A$ has
integral slope on $\bdd \overline{N(k)}$. Each time $k$ goes once around
the longitude of $T$, a $\bdd A$ component goes $q$ times around the
meridian of $\bdd \overline{N(k)}$. Since $k$ goes $p$ times around the
longitude of $T$, the slope of $\bdd A$ on $\bdd \overline{N(k)}$ is $pq$.

The Cabling Conjecture has been proven for many classes of knots, including:
\begin{itemize}
  \setlength\itemsep{.1em}
\item composite knots by Gordon in 1983 \cite{Gordon83};
\item satellite knots by Scharlemann in 1989 \cite{Scharlemann90};
\item strongly invertible knots by Eudave-Mu\~{n}oz in 1992
      \cite{EudaveMunoz92};
\item alternating knots by Menasco and Thistlethwaite in 1992
      \cite{MenascoThistlethwaite92};
\item arborescent knots by Wu in 1994 \cite{Wu96};
\item knots of bridge number up to $4$ in 1995 \cite{Hoffman95};
\item symmetric knots by Hayashi and Shimokawa in 1998 \cite{HayashiShimokawa98};
\item knots of bridge number at least $6$ and distance at least $3$,
      by Blair, Campisi, Johnson, Taylor and Tomova in 2012 \cite{BCJTT12}.
\end{itemize}

Due to the results of Hoffman \cite{Hoffman95} and Blair et. al.
\cite{BCJTT12}, proving the Cabling Conjecture for $5$-bridge knots
restricts remaining cases to knots with low distance.

We assume the existence of a reducing surgery on a knot that does not
satisfy the Cabling Conjecture, and find a planar
surface in the knot exterior such that its intersection with the
reducing sphere has certain desirable properties. The arcs of intersection
are treated as edges in graphs on the planar surface and the reducing
sphere, and we use combinatorial methods to show the existence of various
structure in each graph. The structure we find, combined with recent
results of Zufelt \cite{Zufelt14}, show that the bridge number must
be at least $6$.

It should be noted that in \cite{Hoffman98}, Hoffman mentions having
unpublished notes proving the Cabling Conjecture for $5$-bridge knots.
\section{Setup}

Let $k\subset S^3$ be a knot, and let $M = S^3 - N(k)$ be the exterior
of $k$. Let $\gamma$ be the meridional slope in $\bdd M$. Given
a slope $\pi$ in $\bdd M$, let $M(\pi )$ be the $3$-manifold
obtained by performing $\pi$-Dehn surgery on $k$.

Let $P$ be a $2$-sphere in $M(\pi)$ which intersects $k'$ transversely,
and let $\check{P} = P \cap M$. Then each component of $\bdd \check{P}$
has slope $\pi$. Let $\gamma$ be the meridian slope of $\overline{N(k)}$.
Then $\gamma$-Dehn surgery is trivial, so $M(\gamma) \cong S^2$.
If $Q$ is a $2$-sphere in $M(\gamma)$ intersecting $k$ transversely,
then each component of $\bdd \check{Q}$ has slope $\gamma$.

We use Gabai thin position from \cite{Gabai87} (this setup follows \cite{Hoffman95}).
Define a height function for $k\subset S^3$,
$h:S^3 \setminus \{x,y\}\cong S^2 \times \mathbb{R}\to \mathbb{R}$.
Let $Q_\alpha = h^{-1}(\alpha) = S^2\times \alpha$ be the
\define{level sphere} at height $\alpha$.
A \define{generic presentation} of $k$ is an embedding
$f:S^1\to S^3$ such that $h\circ f$ is a Morse function with $2\beta$
critical points occuring at distinct levels.
Assuming a generic presentation of $k$,
let $c_1 < c_2 < \ldots < c_{2\beta}$ be the critical levels and select
levels $h_1 < h_2 < \ldots < h_{2\beta}$ such that $c_i < h_i < c_{i + 1}$
for $i = 1,\ldots,2\beta - 1$.
Define $Q_i$ to be the level sphere at level $h_i$.
The \define{width} is
$$w(k) = min\{\frac{1}{2}\sum\vert Q_i \cap f(S^1) \vert
\vert f \text{ is a generic presentation of }k\}.$$
A \define{thin presentation} is one which realizes $w(k)$ (i.e. a generic
presentation which minimizes the sum).

Assume a thin presentation of $k$.
The following is an combination of results of Gabai \cite{Gabai87} with
results of Gordon and Luecke \cite{GordonLuecke87}, as stated in
\cite{Hoffman95}.
\begin{prop}[Proposition 1.2.1 in \cite{Hoffman95}]
\label{prop:GabaiGL}
Suppose $P\subset M(\pi)$ is an essential $2$-sphere (with
$\check{P} = P\cap M$) such that $P$ meets $k'$ transversely and minimally.
Then there is a (level) $2$-sphere $Q\subset M(\gamma)$ (with
$\check{Q} = Q\cap M$) such that
\begin{enumerate}[(i)]
\item $\bdd \check{P} \subset \bdd M$ (resp.
$\bdd \check{Q} \subset \bdd M$)
consists of parallel copies of $\pi$ (resp. $\gamma$);
\item $\check{P}$ and $\check{Q}$ intersect transversely;
\item no arc of $\check{P} \cap \check{Q}$ is boundary-parallel in either
$\check{P}$ or $\check{Q}$; and
\item \label{itm:intNumberOne} each component $\bdd \check{P}$
meets each component of $\bdd \check{Q}$ exactly once.
\end{enumerate}
\end{prop}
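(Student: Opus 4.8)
The plan is to assemble the statement from two ingredients: the theorem of Gordon and Luecke that only integral surgery slopes can yield reducible manifolds \cite{GordonLuecke87}, which supplies~\ref{itm:intNumberOne}, and the thin-position machinery of Gabai \cite{Gabai87}, which supplies a level sphere $Q$ with (i)--(iii). First I would normalize $P$: within its isotopy class in $M(\pi)$, take it transverse to $k'$ and meeting $k'$ in the fewest possible points, and set $\check P = P\cap M$. This minimality makes $\check P$ essential in $M$: an interior compressing disk for $\check P$ would let us surger $P$ into two $2$-spheres, at least one of which is essential and meets $k'$ in fewer points, contradicting minimality; and a $\bdd$-compression of $\check P$ running across the filling solid torus would isotope $P$ so as to reduce $|P\cap k'|$. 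Each component of $\bdd\check P$ is a meridian disk of the filling solid torus intersected with $M$, hence carries slope $\pi$ -- the $\check P$-part of (i). Since $M(\pi)$ is reducible, \cite{GordonLuecke87} forces $\pi$ to be integral, i.e.\ $\Delta(\pi,\gamma)=1$; so as soon as $Q$ is chosen to be a level sphere -- making each component of $\bdd\check Q$ a meridian of $\bdd M$, the $\check Q$-part of (i) -- every component of $\bdd\check P$ meets every component of $\bdd\check Q$ exactly once, which is~\ref{itm:intNumberOne}.

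To produce $Q$ I would take one of the level spheres $Q_1,\dots,Q_{2\beta-1}$ of a thin presentation of $k$ in $M(\gamma)\cong S^3$. Each $Q_i$ meets $k$ transversely, so $\check Q_i=Q_i\cap M$ is a planar surface with meridional boundary. After a small isotopy $\check P$ and $\check Q_i$ meet transversely, and using incompressibility of $\check P$ I would isotope $\check P$ to eliminate every circle of $\check P\cap\check Q_i$ bounding a disk in $\check P$, hence -- again by incompressibility of $\check P$ -- every circle bounding a disk in $\check Q_i$; after this standard innermost-disk cleanup $\check P\cap\check Q_i$ is a union of transverse arcs, giving (ii).

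It remains to choose the index $i$ so that no arc of $\check P\cap\check Q_i$ is $\bdd$-parallel in $\check P$ or in $\check Q_i$, which is (iii); this is where Gabai's argument does the real work. An outermost $\bdd$-parallel arc of $\check P\cap\check Q_i$ in $\check Q_i$ cuts off a disk $D$ with interior disjoint from $\check P$ and with one boundary arc on $\bdd M$ near $k$; according to the direction of that arc relative to the height function $h$, $D$ is an ``upper'' or a ``lower'' disk for $k$ at height $h_i$, and the simultaneous presence of a disjoint upper and lower disk would allow an isotopy of $k$ strictly decreasing $w(k)$, contradicting thinness. Running this analysis over all the levels $h_i$ (and, if necessary, comparing distinct thin presentations) shows that some level $i$ must admit no $\bdd$-parallel arc in $\check Q_i$, while the minimality of $|P\cap k'|$ simultaneously rules out arcs that would be $\bdd$-parallel in $\check P$; taking $Q=Q_i$ for such an $i$ then gives (iii), with (i)--\ref{itm:intNumberOne} already arranged above. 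I expect the bookkeeping in this last step -- classifying the outermost disks at each level as upper or lower, and checking that the resulting moves genuinely lower the width rather than merely relocating critical points -- to be the main obstacle; the rest is routine general-position and innermost-disk manipulation.
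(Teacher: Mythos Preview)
The paper does not supply its own proof of this proposition; it simply quotes it from \cite{Hoffman95} as a combination of results of Gabai \cite{Gabai87} and Gordon--Luecke \cite{GordonLuecke87}. Your overall plan---use Gordon--Luecke's integrality theorem to get $\Delta(\pi,\gamma)=1$ and hence \ref{itm:intNumberOne}, and Gabai's thin-position machinery for (i)--(iii)---is exactly the argument behind those citations.

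That said, your execution of the Gabai step has the roles of $\check P$ and $\check Q_i$ reversed. An outermost $\bdd$-parallel arc in $\check Q_i$ cuts off a disk $D\subset\check Q_i$; since $\check Q_i$ lies in a level sphere, $D$ sits at height $h_i$ and is neither an upper nor a lower disk---its boundary arc on $\bdd M$ runs along a meridian and has no ``direction relative to $h$''. The upper and lower disks in Gabai's argument come instead from arcs outermost in $\check P$: such an arc cuts off a disk $D\subset\check P$ whose interior misses $Q_i$ and therefore lies entirely above or below it. Thin position forbids disjoint upper and lower disks at the same level, and the usual high/low transition across the levels then produces a level sphere $Q$ at which no arc of $\check P\cap\check Q$ is $\bdd$-parallel in $\check P$. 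Correspondingly, the attributions in your last sentence should be swapped: thin position eliminates $\bdd$-parallel arcs on the $\check P$ side, and it is the $\bdd$-incompressibility of $\check P$ (which you correctly derive from minimality of $|P\cap k'|$) that then rules out arcs $\bdd$-parallel in $\check Q$, since an outermost one would furnish a $\bdd$-compressing disk for $\check P$. With this swap your sketch is correct; the parenthetical about ``comparing distinct thin presentations'' is unnecessary.
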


This relates to the Cabling conjecture as shown below.
Let $p = \vert P \cap k' \vert$ and $q = \vert Q \cap k \vert$.
\begin{prop}[Proposition 1.2.2 from \cite{Hoffman95}]
 If $P$ is a $2$-sphere in $M(\pi)$ such that $P$ meets $k'$ transversely
 and $p = 2$, then the knot $k$ is a cabled knot and $\check{P}$ is the
 cabling annulus.
\end{prop}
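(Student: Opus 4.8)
The plan is to recognize $\check{P}$ as an essential annulus in $M$ whose boundary slope $\pi$ is integral and not meridional, and then to read off the cable structure from such an annulus. Throughout I take $P$ to be essential and to meet $k'$ minimally, as in Proposition~\ref{prop:GabaiGL}.

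First I would identify $\check{P}$. Writing $V = M(\pi)\setminus M$ for the filling solid torus with core $k'$, minimality of $|P\cap k'| = p = 2$ lets us isotope $P$ (rel $k'$) so that $P\cap V$ is exactly two meridian disks of $V$; then $\check{P} = P\cap M$ is a twice-punctured sphere, i.e. an annulus, with each component of $\partial\check{P}$ a copy of $\pi$. Since $P$ is essential we have $\pi\ne\gamma$ (as $M(\gamma)\cong S^{3}$ is irreducible), and applying Proposition~\ref{prop:GabaiGL} to $P$ shows each component of $\partial\check{P}$ meets a $\gamma$-curve once, so $\pi$ is an integral slope.

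Next I would check that $\check{P}$ is essential, i.e. incompressible and not boundary-parallel. A compressing disk $D\subset M$ would have $\partial D$ a core circle of $\check{P}$; cutting $\check{P}$ along $\partial D$ and capping the two resulting sub-annuli with parallel copies of $D$ produces a properly embedded disk in $M$ with boundary of slope $\pi$, which is essential on $\partial M$, forcing $M$ to be a solid torus and $k$ the unknot --- contradicting nontriviality of $k$ (equivalently, an essential sphere in a surgery on the unknot meets the dual core at most once). If instead $\check{P}$ were parallel into $\partial M$, recapping by the two meridian disks of $V$ would exhibit $P$ as the boundary of a ball in $M(\pi)$, again a contradiction. (A $\partial$-compressing disk is handled as in the first case.)

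Finally, $\check{P}$ is an essential annulus in the exterior of a nontrivial knot, with both boundary circles on $\partial M$ of one integral slope $\pi\ne\gamma$. Cutting $M$ along $\check{P}$ separates it (since $P$ separates $M(\pi)$), and gluing to each side the adjacent annulus of $\partial M\setminus\partial\check{P}$ presents two tori in $S^{3}$; each bounds a solid torus by Alexander's theorem, and running through the cases places $k$ as a $(p,q)$-curve with $p\ge 2$, $q\ge 1$ on the boundary of an unknotted or knotted solid torus $T$, with $\check{P}$ the annulus obtained by cutting $\partial T$ along $k$. Equivalently, invoke the classification of essential annuli in knot exteriors (so $k$ is a torus, cable, or composite knot), excluding the composite case because there the essential annulus has meridional boundary slope, which $\pi$ is not. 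Either way $k$ is a $(p,q)$-cable knot and $\check{P}$ is (isotopic to) its cabling annulus, and the boundary-slope computation from the Introduction gives $\pi = pq$. The main obstacle is this last step --- producing the explicit cable structure, and in particular identifying $\check{P}$ with \emph{the} cabling annulus, from the bare existence of the essential annulus; the earlier steps are routine once minimality of $P$ is used.
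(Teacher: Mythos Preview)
The paper does not give its own proof of this proposition; it is quoted verbatim as Proposition~1.2.2 of \cite{Hoffman95} and used as a black box to dispose of the case $p=2$. So there is nothing in the present paper to compare your argument against.

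That said, your outline is the standard argument and is essentially correct. A few remarks. Your invocation of Proposition~\ref{prop:GabaiGL} to get integrality of $\pi$ is legitimate in this paper's setup (thin position is ambient), but the underlying fact $\Delta(\pi,\gamma)=1$ for a reducing slope is really the Gordon--Luecke result \cite{GordonLuecke87} and does not need the particular level sphere $Q$. For incompressibility, a slightly cleaner variant avoids appealing to nontriviality of $k$: surgering $\check{P}$ along a compressing disk and capping in $V$ yields two spheres each meeting $k'$ once; if either were essential this would contradict minimality $p=2$, and if both bound balls then so does $P$. Your boundary-parallel argument is fine once one checks that the product region $W\cong\check{P}\times I$ glued to the adjacent ball of $V\setminus(D_1\cup D_2)$ along a longitudinal annulus is indeed a ball.

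You correctly flag the last step as the substantive one. The cleanest route is exactly what you suggest: invoke the classification of essential annuli in nontrivial knot exteriors (torus, cable, or composite; see e.g.\ Simon or the JSJ theory for knot exteriors), and rule out the composite case because there the annulus has meridional boundary while $\pi$ is integral and $\pi\ne\gamma$. The identification of $\check{P}$ with the cabling annulus is then up to isotopy: once $k$ is pushed onto the relevant torus $T_2=\check{P}\cup A_2$, cutting $T_2$ along $k$ gives an annulus isotopic (rel $\partial$) to $\check{P}$ after absorbing the half-annuli of $A_2$ into $N(k)$.
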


Our goal is to prove the Cabling Conjecture for bridge number $b \leq 5$.
We would like to claim that because $k$ is in thin position, $q \leq 2b$.
To see this, note that with a $b$-bridge knot $k$ in (generic) bridge
position $\mathcal{B}$,
$$\sum\vert Q_i \cap f(S^1) \vert =
2 + 4 + \ldots + (2b - 2) + 2b + (2b - 2) + \ldots + 4 + 2.$$
If $k$ is put in a different generic position $\mathcal{P}$ such that
some level sphere intersects $k$ in
$q' > 2b$ points, we can compare the widths of the two presentations
\begin{align}
w_\mathcal{P}(k) &= 2 + 4 + \ldots + (q' - 2) + q' + (q' - 2) + \ldots + 4 + 2 + \ldots\\
w_\mathcal{B}(k) &= 2 + 4 + \ldots + (2b - 2) + 2b + (2b - 2) + \ldots + 4 + 2.
\end{align}
Clearly $w_\mathcal{B}(k) < w_\mathcal{P}(k)$, 
so $\mathcal{P}$ is not a thin presentation of $k$.

Thus we assume $p > 2$ and want to show that $q > 10$.

\section{Intersection Graphs}

We will henceforth assume that $p > 2$.

\subsection{Basic Definitions}

We now define the graphs $G_P \subset P$ and $G_Q\subset Q$.
Many definitions and theorems will apply to both spheres $P$ and $Q$.
We will use $\{\redS,\lvlS\} = \{P,Q\}$ when we wish to make statements
which may apply to either sphere.
A \define{fat vertex} of $\redG$ is a component of
$\bdd \check{\redS}$, and each arc component of
$\check{P} \cap \check{Q}$ is an edge in $\redG$.
Select a fat vertex in $\redG$ to label
$1$, and follow along the appropriate knot ($k'$ if $\redS = P$, $k$ if
$\redS = Q$), labeling the remaining fat vertices in the order in which they
are encountered.
We will denote by $V_\redS$ the vertex set of $\redG$.

Since edges of $\redG$ are arc components of $\check{P} \cap \check{Q}$,
they meet fat vertices of $\lvlG$ at components of
$\bdd \check{\lvlS}$. This can
be used to \define{label} the points at which edges meet fat vertices in
$\redG$ with the labels of the appropriate components of
$\bdd \check{\lvlS}$.
We will frequently refer to subsets of $V_P$ by variations of $V$, and subsets
of the labels in $G_P$ by variations of $L$. Since vertices in $\redG$ are
labels in $\lvlG$, this means we will frequently refer to subsets of
$V_Q$ by variations of $L$ (being labels in $G_P$), and subsets of the
labels of $G_Q$ by variations of $V$ (being labels in $G_P$).

Note that by Proposition \ref{prop:GabaiGL}(\ref{itm:intNumberOne}),
each label from $\bdd \check{\lvlS}$ appears
precisely once on each fat vertex in $\redG$. Furthermore, since components
of $\bdd \check{\lvlS}$ are curves of the same slope on $\bdd M$,
the labels from $\bdd \check{\lvlS}$ will appear in the same order
around every fat vertex of $\redG$, though in either orientation
(clockwise or counter-clockwise).
Fat vertices may be assigned a \define{sign} depending on the orientation
of the labels from $\bdd \check{\lvlS}$ (see Figure \ref{fig:GPFatVertices}).
Vertices in $\redG$ are \define{parallel} if they have the same sign,
and \define{antiparallel} if their signs differ. Let $V$ ($V'$)
be a set of vertices of $\redG$ such that all vertices of $V$ ($V'$) are
parallel. Then we call $V$ parallel to $V'$ if the sign which the
vertices in $V$ share matches the sign of the vertices in $V'$, and we
call $V$ antiparallel to $V'$ if the signs are opposite.
\begin{figure}[h]
 \centering
 \includegraphics[width=0.8\textwidth]{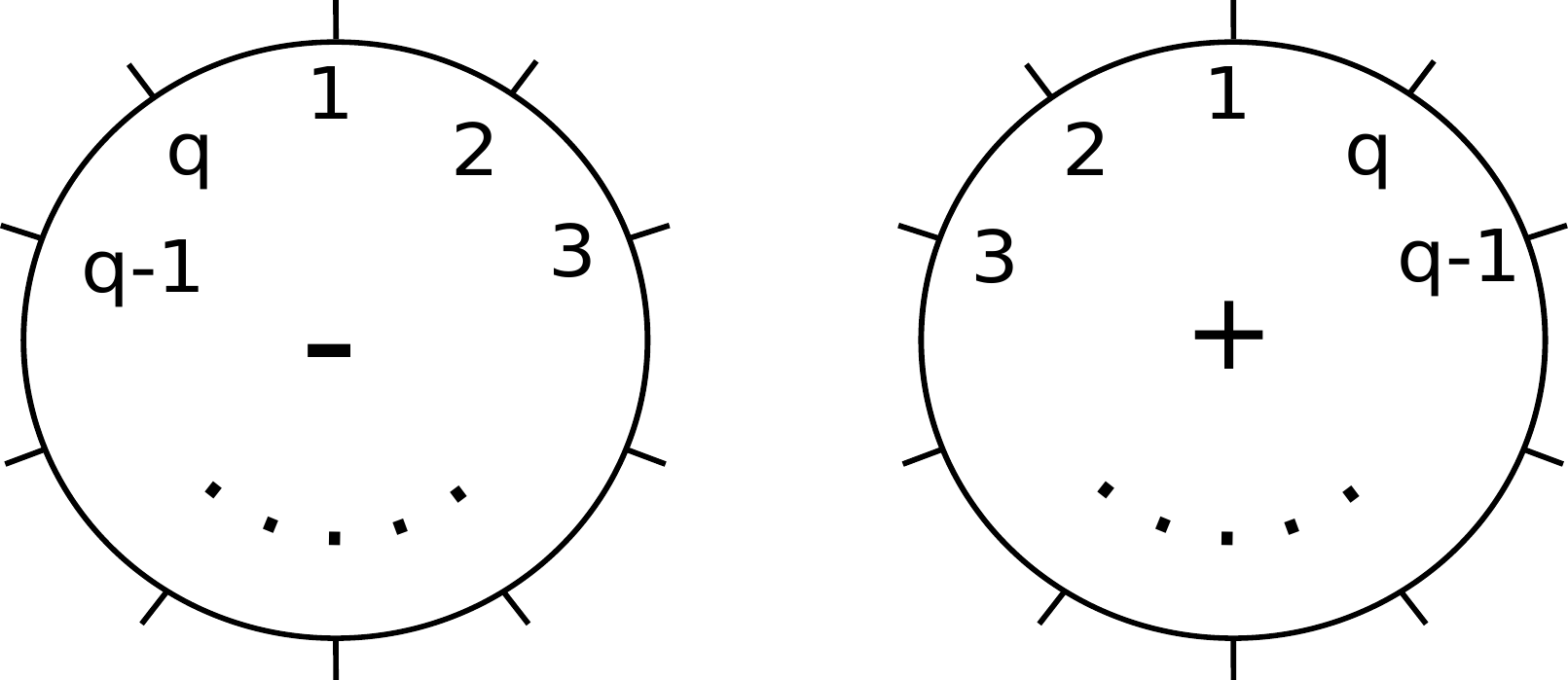}
 \caption{Fat vertices in $\redG$, of each sign.}
 \label{fig:GPFatVertices}
\end{figure}

To each label $x$ on a fat vertex we associate a \define{parity}, which is
the sign of $x$ as a vertex in the other graph. Every label and vertex
pair $(x,v)$ may therefore be assigned a \define{character}, where
char$(x,v) = ($parity $x)($sign $v)$.

It follows from orientability of $M$, $P$, and $Q$ that edges in either
graph connect $(x,v)$ pairs of opposite character. This is known as the
\define{parity rule}.

We will frequently refer to various subgraphs of $\redG$. If $E$
is a collection of edges, $\redG(E)$ is the subgraph of $\redG$ containing all
the fat vertices of $\redG$, and edges $E$.
Let $V,W\subset V_\redS$ be sets of vertices of $\redG$. Then $[V,W]$ is the 
set of edges in $\redG$ between a $V$ vertex and a $W$ vertex, or equivilantly
the set of edges in $\lvlG$ between a $V$ label and a $W$ label.
Let $L$ be a set of labels of $\redG$. We will often consider the subgraph
$\redG([L, V_\lvlS])$, which we will abbreviate as $\redG(L)$.
Note that for an arbitrary label set $L$, $\redG(L)$ may (in fact, generally
will) have edges meeting vertices at labels outside $L$. Such labels are
\define{exceptional labels}.

 \begin{figure}[!h]
  \centering
  \includegraphics[width=0.95\textwidth]{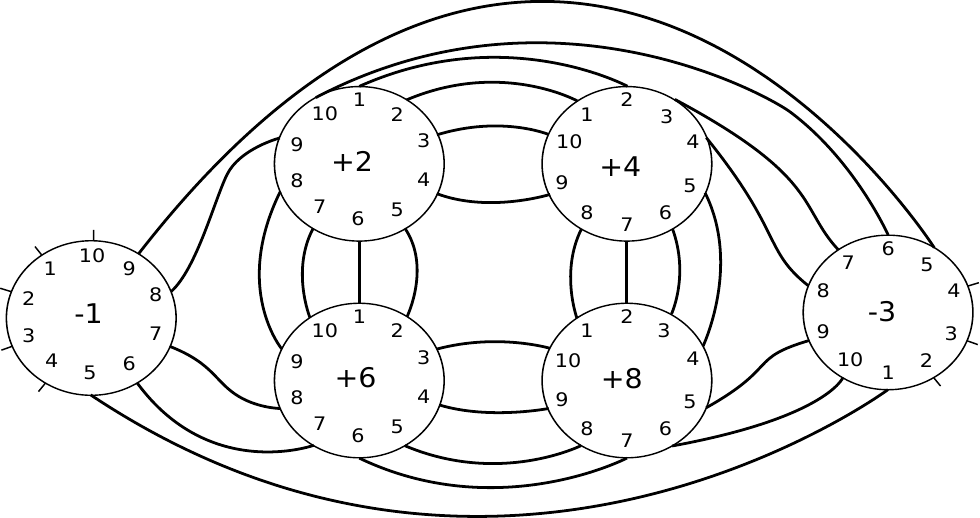}
  \caption[An example of a great web in $G_Q$.]{An example of part
  of $G_Q$. There is a Scharlemann cycle on
  the vertices $\{2,4\}$ and a new $5$-cycle $\sigma$ on the vertices
  $\{1,3\}$. The $\sigma$-disk depicted as bounded contains the
  $\sigma$-set $\{2,4,6,8\}$, which also happens to be an
  innermost $(+)$-set and a great web.}
  \label{fig:GreatWeb}
 \end{figure}
An \define{$x$-cycle} in $\redS$ is a directed cycle on parallel vertices
such that the tail of each edge in the cycle is at the label $x$ on each
vertex. A \define{great $x$-cycle} is an $x$-cycle which bounds a disk such
that all vertices inside the disk are parallel to the $x$-cycle.
A \define{Scharlemann cycle} is a great $x$-cycle bounding a disk containing
in its interior no vertices or edges. A \define{corner} of a face $F$ is the
part of the boundary of a fat vertex which is incident to $F$. If $x$ and
$y$ are labels on a vertex $v$ of $\redS$, the two complementary pieces of
the boundary of $v$ extending from the label $x$ to the label $y$ are
both $\langle x,y \rangle$ corners.
Since a Scharlemann cycle bounds a disk containing no vertices or edges
of $\redG$, each
corner in a Scharlemann cycle must be an $\langle x,x+1\rangle$ corner.
We may therefore refer to an $x,x+1$-Scharlemann cycle.
The \define{order} of a Scharlemann cycle is the number of vertices the
cycle traverses.
A \define{new $x$-cycle} is an $x$-cycle which is not a Scharlemann cycle.
 \begin{lem}[Lemma 2.1.2 of \cite{Hoffman95}]
 \label{lem:ScharlemannOrder}
 If, in $G_Q$, $\Sigma_1$ is an $x_1,x_2$-Scharlemann cycle of order $m$
 and $\Sigma_2$ is a $y_1,y_2$-Scharlemann cycle of order $n$, then
 $\langle x_1,x_2 \rangle = \langle y_1,y_2 \rangle$ and $m=n$.
\end{lem}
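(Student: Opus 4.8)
The plan is to read off enough geometry from the disk faces bounded by Scharlemann cycles to pin down both the label pair and the order. Recall that a Scharlemann cycle $\Sigma$ in $G_Q$ with label pair $\{x,x+1\}$ and order $m$ bounds a face $F$ which is an embedded disk in $\check Q\subset M$; its boundary alternates between $m$ \emph{edges} --- each an arc of $\check P\cap\check Q$ lying in $\check P$ and running from the component $u_x$ of $\bdd\check P$ to the component $u_{x+1}$ --- and $m$ \emph{corners}, each an arc on a fat vertex of $G_Q$, hence on $\bdd M$. The first step is to locate the corners precisely: since the labels occur in the common cyclic order $1,2,\dots,p$ around every fat vertex of $G_Q$, and this order agrees with the order in which $k'$ meets the meridian disks $D_1,\dots,D_p$ of the surgery solid torus $V\subset M(\pi)$ (with $\bdd D_j = u_j$), each corner is an essential arc in the annulus $R_x\subset\bdd M$ cobounded by $u_x$ and $u_{x+1}$, and all $m$ corners are mutually parallel in $R_x$.

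Next I would use $F$ and the surgery solid torus to build a closed surface in $M(\pi)$: push each corner slightly off $\bdd M$ into $V$ through $R_x$, then cap the resulting curve off using parallel copies of $D_x$ and $D_{x+1}$ (equivalently, fold $F$ across the ball $H_x\subset V$ with $\bdd H_x = D_x\cup R_x\cup D_{x+1}$). This yields an embedded surface $\widehat F$ whose topological type is governed by $m$ together with the combinatorics of $\Sigma$, and which meets $D_x$ in exactly the $m$ endpoints of the edges of $\Sigma$, appearing on $\bdd D_x$ in the cyclic order prescribed by $\Sigma$. Because $M = S^3\setminus N(k)$ is irreducible and (as $k$ is nontrivial) $\bdd$-irreducible, $\widehat F$ cannot be essential, and this forces a rigid relationship between $m$, the index $x$, and $\Sigma$.

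To finish, take $\Sigma_1$ (label pair $\langle x_1,x_2\rangle$, order $m$, face $F_1$) and $\Sigma_2$ (label pair $\langle y_1,y_2\rangle$, order $n$, face $F_2$). Distinct Scharlemann cycles bound distinct faces of $G_Q$, so $F_1$ and $F_2$ are disjoint; their capped surfaces $\widehat F_1$ and $\widehat F_2$ can then be arranged to be disjoint (or, when the label pairs overlap in one index, to meet only along a single meridian disk of $V$). If $\langle x_1,x_2\rangle\neq\langle y_1,y_2\rangle$, playing $\widehat F_1$ and $\widehat F_2$ against each other produces a configuration of spheres or compressing disks in $M$ incompatible with $M$ being the (irreducible, $\bdd$-irreducible) exterior of a nontrivial knot; hence $\langle x_1,x_2\rangle = \langle y_1,y_2\rangle$. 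With the label pair fixed, both $\widehat F_1$ and $\widehat F_2$ cap off inside the same ball $H_{x_1}$, and comparing how they meet $D_{x_1}$ --- using that the corners of each are parallel essential arcs of the single annulus $R_{x_1}$ --- forces $m=n$.

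The step I expect to be the main obstacle is the capping construction together with verifying that $\widehat F$ (respectively the disjoint pair $\widehat F_1\sqcup\widehat F_2$) is genuinely essential, i.e.\ ruling out the possibility that it compresses or is boundary-parallel in $M(\pi)$ in a way that evades the intended contradiction. That is precisely the point at which irreducibility of $M$ and the minimality and intersection clauses of Proposition~\ref{prop:GabaiGL} must be exploited carefully, and getting the bookkeeping right in the overlapping-label-pair case will require extra care.
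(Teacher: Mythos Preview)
The paper does not prove this lemma; it is quoted without proof as Lemma~2.1.2 of \cite{Hoffman95}, so there is no in-paper argument to compare against directly. I can, however, comment on your sketch versus the standard proof.

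Your geometric setup is correct --- the face $F$ of a Scharlemann cycle on $\{x,x+1\}$ has its $m$ corners sitting as parallel essential arcs in the annulus $A_x\subset\bdd M$ between $u_x$ and $u_{x+1}$, and the $1$-handle $H_x\subset V$ is indeed the object to exploit. But the conclusion you draw from a \emph{single} capped surface is misdirected. You argue that $\widehat F$ ``cannot be essential'' because $M$ is irreducible and $\bdd$-irreducible; however $\widehat F$ lives in $M(\pi)$, not in $M$, and $M(\pi)$ is reducible by hypothesis, so that appeal does not bite. (In fact $\widehat F=\bdd N(F\cup H_x)$ is compressible simply because it bounds a handlebody --- but this tells you nothing.) More importantly, a single Scharlemann cycle imposes no constraint whatsoever on its own order or label pair, so there is no ``rigid relationship between $m$, $x$, and $\Sigma$'' to extract at that stage; the constraint only appears when two cycles are compared.

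The standard argument for $m=n$ (once the label pair is fixed at $\{1,2\}$) is cleaner than your outline: set $T=(P\setminus(D_1\cup D_2))\cup A_{12}$, an embedded torus in $M(\pi)$. Each face $F_i$ has $\bdd F_i\subset T$ and interior disjoint from $T$, so $F_1,F_2$ are disjoint compressing disks for $T$ on the same side. Disjoint essential simple closed curves on a torus are parallel, and the geometric intersection of $\bdd F_i$ with the core of $A_{12}$ is exactly the order of $\Sigma_i$; hence $m=n$. For the label-pair half, the engine is the minimality of $p=|P\cap k'|$ in Proposition~\ref{prop:GabaiGL}, not irreducibility of $M$: two Scharlemann faces on distinct label pairs let one surger $P$ (via the two $1$-handles and the two faces) to a new essential sphere in $M(\pi)$ meeting $k'$ in fewer than $p$ points. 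You do flag minimality in your final paragraph as something to ``exploit carefully'', but in the actual proof it is the centerpiece of the label-pair argument, not a bookkeeping detail.
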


The key technical result in the resolution of the Knot Complement Problem
was the existence of Scharlemann cycles. Gordon and Luecke's proof
is described in a later section, as many lemmas in their proof are
integral to our result, which explores the graph structure indicated
by their inductive proof.
 \begin{thm}[\cite{GordonLuecke89}]
 \label{thm:SchCycleExistence}
  $G_Q$ contains a Scharlemann cycle.
 \end{thm}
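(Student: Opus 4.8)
The plan is to recover Gordon and Luecke's argument \cite{GordonLuecke89}, whose engine is the interplay between the parity rule and the fact that $P$ is a $2$-sphere (equivalently, that $G_P$ lives on a surface of genus $0$).

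First I would isolate the two structural inputs that make the counting work. The parity rule says that every edge of $G_P$ and of $G_Q$ joins $(x,v)$ pairs of opposite character. Part (iii) of Proposition~\ref{prop:GabaiGL}---no arc of $\check{P}\cap\check{Q}$ is boundary-parallel in $\check{P}$ or $\check{Q}$---says, read on each sphere, that no edge cuts off a monogon (a disk meeting the graph only in that one edge and one corner), and together with the parity rule this also excludes the smallest degenerate bigons, so that collapsing each parallelism class of edges of $G_P$ to a single edge produces an honest reduced graph $\overline{G_P}$ with no such trivial faces. The hypothesis that $P$ is essential is what will ultimately be contradicted: if $G_Q$ carried no Scharlemann cycle, the graph combinatorics would let us compress or ball off $P$.

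The combinatorial heart is to locate a great $x$-cycle in $G_Q$. Here $G_P$ is a graph on $P$ with $p$ fat vertices, each of valence $q$ (by Proposition~\ref{prop:GabaiGL}(\ref{itm:intNumberOne})), hence with $pq/2$ edges. Since $\chi(P)=2$ and $p>2$, the reduced graph $\overline{G_P}$ has at most $3p-6$ edges, so the $pq/2$ edges of $G_P$ are forced to pile up into large parallelism classes; organizing these extremally---as a \emph{web}, a connected subgraph of $G_P$ all of whose vertices are mutually parallel and with no vertex of $G_P$ in its interior---and reading the resulting picture off in the complementary graph yields a great $x$-cycle in $G_Q$, the relevant corner labels being forced to be consecutive by the common cyclic order in which the labels of $\partial\check{P}$ occur around each fat vertex of $G_Q$. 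This is the step I expect to be the main obstacle: one must run the Euler-characteristic bookkeeping so that the edges not paid for by $\chi(P)$ are genuinely forced into the web, and one must track exactly which labels sit on its outermost corners; the thin presentation of $k$ (via Proposition~\ref{prop:GabaiGL}) is what lets us take $Q$ to be the level sphere on which this picture lives.

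Finally I would pass from a great $x$-cycle $\sigma$ in $G_Q$ to a Scharlemann cycle. Choose $\sigma$ innermost among great cycles; it bounds a disk $D$ whose interior vertices, if any, are all parallel to $\sigma$. If $D$ contains vertices or edges, the subgraph it carries is again a graph on a disk carrying too many edges for its Euler characteristic, so an induction on the number of interior vertices (Scharlemann's cycle lemma) produces a smaller great cycle inside $D$, and ultimately one whose interior is empty; a great $x$-cycle bounding a disk with no interior vertices or edges is precisely a Scharlemann cycle, and its corners are then all of the form $\langle x,x+1\rangle$, consistent with Lemma~\ref{lem:ScharlemannOrder}. Since the web, great-cycle, and label lemmas used here recur throughout the argument, the full details are developed in the later section rather than here.
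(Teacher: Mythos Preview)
Your proposal does not match the paper's proof and has a genuine gap at its center. The paper (following \cite{GordonLuecke89}, deconstructed in Section~4) argues via the machinery of stars, types, and dual orientations: one shows by induction on the nesting of $x$-cycles in $G_Q$ (Theorem~\ref{thm:ScharlemannCycleInNewXCycle}) that either int~$D$ contains a Scharlemann cycle or $G_P(L)$ represents \emph{every} $L$-type; taking $D$ to be essentially all of $Q$, the latter alternative forces nontrivial torsion in $H_1(M(\gamma))$ by Parry's result (Theorem~\ref{thm:AllTypesTorsion}), contradicting $M(\gamma)\cong S^3$. No Euler-characteristic counting of parallel families in $G_P$ enters at all.

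The gap in your sketch is the assertion that a web in $G_P$, once found, ``yields a great $x$-cycle in $G_Q$'' by reading off in the complementary graph. By Proposition~\ref{prop:GabaiGL}(\ref{itm:intNumberOne}) each label occurs exactly once at each fat vertex, so your $3p-6$ bound on $\overline{G_P}$ gives parallel families of average size only about $q/6$, which forces no label repetition and hence no automatic cycles in $G_Q$. Concretely, a family of edges parallel in $G_P$ between vertices $u$ and $v$ becomes, in $G_Q$, a family of edges each carrying the labels $u$ and $v$ at its endpoints: this constrains the local picture at the two $G_P$-vertices $u$ and $v$, not the existence of $x$-cycles on $G_Q$-vertices. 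Arguments of the shape you describe do produce Scharlemann cycles when the geometric intersection number of the two slopes is at least $2$ (so that labels repeat around each fat vertex), or produce Scharlemann cycles in $G_P$ rather than in $G_Q$; neither applies here. Your final paragraph---passing from a great $x$-cycle to a Scharlemann cycle by an innermost argument---is essentially the easy step; it is the \emph{existence} of a great $x$-cycle in $G_Q$ that genuinely requires the full Gordon--Luecke type machinery in this intersection-number-one setting.
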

 By Theorem \ref{thm:SchCycleExistence}, $G_Q$ must have a Scharlemann cycle. By
convention the labels of $G_Q$ and therefore the vertices of $G_P$
are labeled such that Scharlemann cycles of $G_Q$ are $1,2$-Scharlemann
cycles, since by Lemma \ref{lem:ScharlemannOrder} all Scharlemann cycles
are on the same labels. The vertices $1$ and
$2$ in $G_P$ are the \define{special vertices}.
The remaining vertices $V_r$ of $G_P$ are the \define{regular vertices}.

 Hoffman's important result, meanwhile, was the nonexistence of new great
 $x$-cycles.
 \begin{thm}[Lemma 3.0.3 of \cite{Hoffman95}]
 \label{thm:NoNewGreatXCycle}
 If $p > 2$, then $G_Q$ does not contain a new great $x$-cycle.
\end{thm}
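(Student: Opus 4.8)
The plan is to argue by contradiction, mirroring the way Gordon--Luecke and Hoffman rule out ``extended'' Scharlemann cycles. Suppose $\sigma$ is a new great $x$-cycle in $G_Q$, bounding a disk $\Delta\subset Q$ all of whose interior vertices are parallel to $\sigma$; normalize so that they, and the vertices of $\sigma$, are all positive. Since $\sigma$ is not a Scharlemann cycle, $\Delta$ is not a face, so the interior of $\Delta$ contains at least one vertex or edge of $G_Q$. Among all new great cycles (over all labels) I would choose $\sigma$ so that the bounded disk $\Delta$ is minimal under inclusion. It helps to keep the dual picture in $G_P$ in mind: writing $\sigma$ as a directed cycle of order $m$ on vertices $u_{j_1},\dots,u_{j_m}$ of $G_Q$ with every tail at the label $x$, the $m$ edges of $\sigma$ are precisely the edges of $G_P$ incident to the fat vertex $x$, appearing there at the labels $j_1,\dots,j_m$, all of the same parity.

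First I would extract a clean configuration inside $\Delta$. If the interior of $\Delta$ contains no vertex, a standard innermost argument on the edges that lie in $\Delta$ either contradicts minimality of $\Delta$ or conflicts with conditions (ii)--(iii) of Proposition~\ref{prop:GabaiGL}; so I may assume the interior of $\Delta$ contains a vertex, and then $G_Q\cap\overline{\Delta}$ is a graph in a disk all of whose vertices are parallel, with $\sigma$ as its outer boundary --- exactly the kind of ``web'' to which the reduction arguments underlying Theorem~\ref{thm:SchCycleExistence} apply. Those arguments produce a Scharlemann cycle $\Sigma$ whose face lies inside $\Delta$; by Lemma~\ref{lem:ScharlemannOrder}, $\Sigma$ is a $1,2$-Scharlemann cycle of the same order $m_0$ as every Scharlemann cycle of $G_Q$, and the $m_0$ vertices it traverses lie in $\overline{\Delta}$, hence are parallel to $\sigma$.

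Next I would pass to $M(\pi)$, writing $V_\pi$ for the solid torus glued to $M$ to form it. The face $F$ of $\Sigma$ has $m_0$ edges in $\check{P}$ and $m_0$ corners on the annular piece of $\partial M$ lying between the boundary components labeled $1$ and $2$; capping the edges of $F$ with the sub-disks they cut off from the meridian disks of $P\cap V_\pi$ at labels $1$ and $2$ produces, as in Gordon--Luecke, a M\"obius band or once-punctured torus $\widehat{F}$ in $M(\pi)$. The analogous construction along $\sigma$, using $\Delta$ together with a sub-disk of the meridian disk of $P\cap V_\pi$ at label $x$, yields a second surface $\widehat{\Delta}$; the parity rule controls the relevant orientations, so both surfaces are embedded, and $\widehat{\Delta}$ is strictly ``larger'' than $\widehat{F}$ precisely because $\Delta$ carries extra vertices and edges in its interior. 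Now the reducing sphere $P$ enters: using that $P$ is essential in $M(\pi)$ and meets $k'$ minimally, the incompatibility of $\widehat{F}$ and $\widehat{\Delta}$ with $P$ forces either a compression of $\check{P}$ that lowers $|P\cap k'|$ below $p$ --- impossible by the minimality hypothesis in Proposition~\ref{prop:GabaiGL} --- or the conclusion that $\check{P}$ is itself an essential annulus, i.e.\ $p=2$, against the standing assumption $p>2$. In either case we reach a contradiction, so no new great $x$-cycle exists.

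I expect the third paragraph to be the main obstacle. Performing the surface surgery requires pinning down exactly how $\Delta$ meets $V_\pi$, choosing the right sub-disks of the label-$x$ and label-$\{1,2\}$ meridian disks of $P\cap V_\pi$ to attach, and checking via the parity rule that the glued surfaces are embedded and of the claimed type. This is more delicate than the classical single-Scharlemann-cycle argument, since a \emph{new} great $x$-cycle carries vertices and edges in the interior of $\Delta$ that cannot simply be discarded, and the crux is to show that this interior structure cannot be absorbed without shrinking $p$. Establishing that the only way out of the contradiction is $p\le 2$, so that the hypothesis $p>2$ genuinely forbids new great $x$-cycles, is where the real work lies.
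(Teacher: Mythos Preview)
The paper does not prove this statement --- it is quoted from Hoffman's thesis without argument --- so there is no in-paper proof to compare against. I comment only on the soundness of your outline.

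Your first two paragraphs are broadly reasonable. The minimality setup is standard, and a Scharlemann cycle does exist inside $\Delta$: any innermost disk face of $G_Q$ lying in a region of mutually parallel vertices is bounded by a Scharlemann cycle, which is the core of the Gordon--Luecke combinatorics. One warning: in this paper's logical order, Theorem~\ref{thm:ScharlemannCycleInNewXCycle} and several of the lemmas feeding into it (e.g.\ Lemma~\ref{lem:SDiskProperties}, Lemma~\ref{lem:TrivialBaseCase}) already invoke Theorem~\ref{thm:NoNewGreatXCycle}, so you must appeal to the original Gordon--Luecke argument rather than the paper's reformulation to avoid circularity.

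The genuine gap is your third paragraph, and you have essentially conceded it. The construction of $\widehat{\Delta}$ does not make sense as written. For the Scharlemann cycle, the face $F$ is an honest disk in $\check{Q}$ whose boundary alternates between arcs in $\check{P}$ and arcs on the annulus of $\partial M$ lying between the boundary components of $\check{P}$ labelled $1$ and $2$; in $M(\pi)$ one caps with pieces of the meridian disks of $V_\pi$ at those two components to build $\widehat{F}$. But $\Delta$ is \emph{not} a face of $G_Q$: by the hypothesis that $\sigma$ is new, the interior of $\Delta$ contains vertices of $G_Q$, and these are boundary components of $\check{Q}$ --- meridians of $k$, i.e.\ curves of slope $\gamma$ on $\partial M$. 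In $M(\pi)$ such curves do not bound disks in $V_\pi$, so $\check{\Delta}$ cannot be capped off to a surface $\widehat{\Delta}$ of the sort you describe by ``attaching a sub-disk of the meridian disk of $P\cap V_\pi$ at label~$x$.'' With $\widehat{\Delta}$ undefined, the asserted incompatibility with $P$ and the reduction to $p\le 2$ are left entirely unsubstantiated. This is not a detail to be filled in later; it is the whole argument, and the surface-building strategy you propose cannot carry it, because the very feature that makes $\sigma$ a \emph{new} great $x$-cycle --- interior vertices in $\Delta$ --- is exactly what prevents the capping you need.
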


A vertex $x$ of $G_Q$ is \define{isolated} if each edge incident to $x$ leads
to an antiparallel vertex.
\begin{lem}[2.4.1 from \cite{Hoffman95}]
\label{lem:NoIsolatedVertices}
 There are no isolated vertices in $G_Q$.
\end{lem}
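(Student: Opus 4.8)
The plan is to argue by contradiction, first translating the hypothesis into a statement about $G_P$ via the parity rule, and then using planarity of $G_P$ to produce a forbidden cycle in $G_Q$.

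Suppose $x\in V_Q$ is isolated. By Proposition~\ref{prop:GabaiGL}(\ref{itm:intNumberOne}) each of the labels $1,\dots,p$ occurs exactly once at $x$, and each such occurrence is the end of an edge of $G_Q$. Since $x$ is isolated, no edge at $x$ can be a loop (a loop leads back to $x$, which is parallel to itself), so $x$ has exactly $p$ incident edges in $G_Q$, running to $p$ (not necessarily distinct) vertices, every one of which is antiparallel to $x$. The key step is then a parity-rule translation into $G_P$. Let $e$ be an edge of $G_P$ incident to the label $x$, say $e$ joins $(x,v)$ to $(z,w)$ in $G_P$; viewed in $G_Q$, $e$ joins the vertex $x$ to the vertex $z$, and since $x$ is isolated $z$ is antiparallel to $x$ in $G_Q$ (in particular $z\ne x$). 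Now apply the parity rule to $e$: $\mathrm{char}(x,v)=-\mathrm{char}(z,w)$ with $\mathrm{char}(\,\cdot\,,\,\cdot\,)=(\text{parity})(\text{sign})$, where the parity of a label is the sign of that label as a vertex of the other graph. As $x$ and $z$ have opposite signs as vertices of $G_Q$, their parities as labels of $G_P$ are opposite, so the parity rule forces $\text{sign }v=\text{sign }w$ in $G_P$. Hence \emph{every edge of $G_P$ meeting the label $x$ joins two parallel vertices of $G_P$}. The same parity computation, applied this time to an edge of a Scharlemann cycle of $G_Q$ (which exists by Theorem~\ref{thm:SchCycleExistence}), shows that the special vertices $1$ and $2$ of $G_P$ are antiparallel; in particular both parallelism classes of $V_P$ are nonempty, and the label $x$ occurs exactly once on a vertex of each class.

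To finish I would exploit that $G_P$ is a graph on the sphere $P$. Fix a parallelism class $V_\varepsilon\subset V_P$; each of its vertices carries a unique label-$x$ endpoint, lying by the previous step on an edge of $G_P$ contained in the subgraph spanned by $V_\varepsilon$. Using the common cyclic order of labels around the fat vertices together with the Jordan curve theorem on $P$, one extracts from these edges a cycle of parallel vertices of $G_P$ whose corners all use a common label, and reads it back into $G_Q$ as a great $x'$-cycle for an appropriate label $x'$. Since an isolated vertex of $G_Q$ cannot lie on any cycle of parallel vertices of $G_Q$, one can arrange $x'\notin\{1,2\}$, so this cycle is not a Scharlemann cycle and is therefore a new great $x'$-cycle, contradicting Theorem~\ref{thm:NoNewGreatXCycle} (equivalently, the innermost disk it cuts off yields a Scharlemann cycle on labels other than $\{1,2\}$, contradicting Lemma~\ref{lem:ScharlemannOrder}). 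I expect the last passage to be the main obstacle: the parity rule only delivers the local constraint ``label-$x$ edges of $G_P$ join parallel vertices,'' and converting this into a genuine great cycle requires careful use of the planarity and label bookkeeping of $G_P$, plus verifying that the cycle produced really does avoid the special labels and so is new rather than one of the permitted Scharlemann cycles on $\{1,2\}$.
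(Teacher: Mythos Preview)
The paper does not prove this lemma; it is cited from Hoffman without argument, so there is no proof in the paper to compare against. I will assess your attempt on its own terms.

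Your opening is correct and standard: if $x\in V_Q$ is isolated, the parity rule forces every label-$x$ edge of $G_P$ to join parallel vertices, so each vertex of $G_P$ has out-degree one in the ``tail at label $x$'' directed graph, and an innermost such cycle in $G_P$ bounds a disk containing only parallel vertices --- a great $x$-cycle in $G_P$. (A minor aside: you need not invoke Theorem~\ref{thm:SchCycleExistence} to see that both signs occur in $V_P$; that follows from $P$ being a separating sphere, and is worth avoiding since the paper's deconstruction of that theorem actually appeals to the present lemma.)

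The gap is exactly where you suspect, and it is not merely a missing detail but a wrong turn. The cycle you have produced lives in $G_P$, and there is no way to ``read it back into $G_Q$ as a great $x'$-cycle'': the edges of your $x$-cycle, viewed in $G_Q$, are all incident to the single vertex $x$ and form a star there, never a cycle. Consequently Theorem~\ref{thm:NoNewGreatXCycle} and Lemma~\ref{lem:ScharlemannOrder}, which are statements about $G_Q$, simply do not apply. Your discussion of arranging $x'\notin\{1,2\}$ also conflates the two label sets --- labels on a $G_P$-cycle lie in $V_Q$, whereas the special vertices $1,2$ lie in $V_P$ --- so the comparison is not even well-typed.

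The argument should stay in $G_P$. Any great $x$-cycle in either intersection graph bounds a disk containing a Scharlemann cycle (this is the elementary CGLS/Gordon--Luecke lemma, valid symmetrically in both graphs). A Scharlemann cycle in $G_P$ on adjacent labels $y,y{+}1\in V_Q$, however, is forbidden: tubing its disk face along the annulus on $\partial N(k)$ between the meridians $y$ and $y{+}1$ and capping with meridian disks produces a sphere in $M(\gamma)\cong S^3$ exhibiting a nontrivial lens-space summand. That is Hoffman's contradiction; you found the right great cycle, but routed the contradiction through the wrong graph.
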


 A \define{great $m$-web} $\Lambda$ is a collection of parallel vertices
in $\redG$ such that
\begin{enumerate}
\item $\Lambda$ lies in a disk $D_\lambda$ of $\redS$ such that every vertex
in $D_\Lambda$ is a vertex of $\Lambda$, and
\item precisely $m$ edges leave $\Lambda$.
\end{enumerate}
A great $(p - 2)$-web in $G_Q$ will be referred to simply as a
\define{great web}.
The proof of Theorem \ref{thm:SchCycleExistence} in \cite{GordonLuecke89}
shows the existence of a great web in $G_Q$.

Only the relevent parts of the following results have been kept.
\begin{thm}[Part of Proposition 3.1 from \cite{Zufelt14}]
 \label{thm:GreatWebDivisibility}
 Let $\Lambda$ be a great web, $V(\Lambda)$ the vertices of the great web,
and $v = \vert V(\Lambda)\vert $. Let $n$ be the Scharlemann order.
Then $n$ divides $v$.
\end{thm}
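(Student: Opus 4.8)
\emph{Proof plan.} The plan is to show that the vertex set of a great web carries an $n$-periodic structure — concretely, that the order-$n$ Scharlemann cycles of $G_Q$ inside the web partition $V(\Lambda)$ — from which $n\mid v$ is immediate. I would work entirely inside the disk $D_\Lambda\subset Q$ carrying $\Lambda$. First I would record what the hypothesis "great web'' buys: every vertex of $D_\Lambda$ is a vertex of $\Lambda$, all of these vertices are mutually parallel, and exactly $m=p-2$ edge–ends escape $\Lambda$. Since each fat vertex of $G_Q$ carries exactly $p$ edge–ends by Proposition~\ref{prop:GabaiGL}(\ref{itm:intNumberOne}), the valences of the internal subgraph $G_Q|_{D_\Lambda}$ sum to $pv-m=(p-2)v+p$, which together with an Euler characteristic count on $D_\Lambda$ pins down the number of internal faces. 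Crucially, because $D_\Lambda$ contains \emph{only} vertices of $\Lambda$ and these are all parallel, every cycle of $G_Q$ contained in $D_\Lambda$ that bounds a disk is automatically a great cycle; hence by Theorem~\ref{thm:NoNewGreatXCycle} every great $x$-cycle arising here is a Scharlemann cycle, and by Lemma~\ref{lem:ScharlemannOrder} they all lie on the same label pair (taken to be $\langle 1,2\rangle$) and have the same order $n$.

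Next I would extract a usable Scharlemann cycle. Gordon–Luecke's proof of Theorem~\ref{thm:SchCycleExistence} already produces a Scharlemann cycle inside any great web by a face-counting argument run against the parity rule; I would rerun that argument using the local data above to obtain an \emph{innermost} Scharlemann cycle $\Sigma$ of order $n$ in $\Lambda$, positioned so that deleting it leaves a controlled remainder. To organize the periodicity I would orient every $1$–$2$ edge of $G_Q$ from its label-$1$ end to its label-$2$ end; with this convention an order-$n$ Scharlemann cycle is precisely a directed $n$-cycle of $1$–$2$ edges bounding a face, and Lemma~\ref{lem:NoIsolatedVertices} plus the parity rule should force, away from a collar of $\partial D_\Lambda$, that the $1$–$2$ edges form only closed directed cycles — each of which is then an order-$n$ Scharlemann cycle by the previous paragraph.

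The argument would then close by induction on $v$. The base case $v=n$ is a single Scharlemann cycle; and since a nonempty great web always contains a Scharlemann cycle, $v\ge n$ always. For the inductive step I would excise the $n$ vertices and $n$ edges of $\Sigma$ from $D_\Lambda$ and argue that the remainder is a disjoint union of great webs (of smaller sizes, with appropriately reduced counts of escaping edges), to each of which the inductive hypothesis applies, yielding $n\mid(v-n)$ and hence $n\mid v$. Alternatively, once one knows that every vertex of $\Lambda$ lies on exactly one order-$n$ Scharlemann cycle, these cycles directly partition $V(\Lambda)$ into blocks of size $n$ and the induction is unnecessary.

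The main obstacle I anticipate is exactly the bookkeeping around $\partial D_\Lambda$ in the inductive step: the $m=p-2$ escaping edges, and the possibility that a web vertex has one of its label-$1$ or label-$2$ edges running to a label other than $1$ or $2$ (allowed by the parity rule since any label with the same parity as $2$ is a candidate), can break the clean $1$–$2$ cycle pattern and can cause the remainder after excision to be an annular region rather than a union of disks. Showing that such defects are few and themselves accounted for in multiples of $n$ — and that, via Lemma~\ref{lem:ScharlemannOrder}, through each vertex there is a \emph{unique} order-$n$ Scharlemann cycle so that the cycles genuinely partition the vertex set — is where the precise numerology $m=p-2$ of a great $(p-2)$-web, together with $p>2$, has to be used.
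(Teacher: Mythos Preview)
The paper does not supply a proof of this statement: Theorem~\ref{thm:GreatWebDivisibility} is quoted as ``Part of Proposition~3.1 from \cite{Zufelt14}'' and is used as a black box (together with Theorem~\ref{thm:SchCycleNotWeb}) to obtain Corollary~\ref{cor:MinWebSize}. So there is no ``paper's own proof'' to compare your plan against; the argument you are trying to reconstruct lives in \cite{Zufelt14}.

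On the substance of your plan: the strategic picture is right---inside $D_\Lambda$ all vertices are parallel, no edge leaves $\Lambda$ at the labels $1$ or $2$ (this is exactly the paragraph after Lemma~\ref{lem:EdgesInSDisks}), so every $1$-cycle one finds in $D_\Lambda$ is a great $1$-cycle and hence, by Theorem~\ref{thm:NoNewGreatXCycle} and Lemma~\ref{lem:ScharlemannOrder}, a $1,2$-Scharlemann cycle of order $n$. What is \emph{not} yet justified is that these Scharlemann cycles partition $V(\Lambda)$. Your inductive step ``excise $\Sigma$ and the remainder is a disjoint union of great webs'' is where the plan breaks: after removing the $n$ vertices of $\Sigma$, every edge of $\Lambda$ that met a vertex of $\Sigma$ at some regular label now becomes an additional escaping edge, so the remainder is no longer a great $(p-2)$-web, and---as you already note---the ambient region need not even be a disk. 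Likewise, the alternative ``every vertex lies on exactly one order-$n$ Scharlemann cycle'' is asserted but not argued: following the label-$1$ edge at each vertex gives a functional graph of out-degree one, and the cycles of that graph are Scharlemann cycles, but nothing yet forces in-degree one, so some vertices could sit on tails feeding into a cycle rather than on the cycle itself. Resolving this (e.g.\ by a careful face/corner count inside $D_\Lambda$ that exploits the exact value $m=p-2$, or by arguing that an in-degree $\geq 2$ vertex produces a forbidden new great $x$-cycle) is the real content of the theorem, and your plan has identified but not closed that gap.
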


\begin{thm}[Corollary 3.2 from \cite{Zufelt14}]
 \label{thm:SchCycleNotWeb}
 Under the hypotheses from Theorem \ref{thm:GreatWebDivisibility}, $n\neq v$.
 Hence the Cabling Conjecture holds for knots with bridge number $\leq 3$,
 and modulo the case $n = 2$, $v = 4$, for knots with $b\leq 5$.
\end{thm}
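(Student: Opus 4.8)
The plan is to extract the inequality $n \neq v$ by a direct combinatorial argument and then read off the bridge-number statements from arithmetic. First the cheap facts: a Scharlemann cycle traverses at least two vertices, so $n \geq 2$; since $n \mid v$ and $v \geq 1$ this already gives $v \geq n \geq 2$; and once $n \neq v$ is known, $v$ being a multiple of $n$ other than $n$ itself forces $v \geq 2n \geq 4$. For the second half of the corollary, combine $v \geq 4$ with the width bound $q \leq 2b$ established earlier and the sign-homogeneity of a great web: the $v$ vertices of $\Lambda$ are all parallel, and since $Q$ is a $2$-sphere its $q$ vertices split evenly between the two sign classes, so $v \leq q/2 \leq b$. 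Recalling (Proposition~1.2.2 of \cite{Hoffman95}) that $p = 2$ already forces $k$ to be a cable, it suffices to treat $p > 2$, and then $G_Q$ carries a great web. If $b \leq 3$ we would get $v \leq 3 < 4$, which is impossible; hence under $p > 2$ no reducing sphere exists, and the Cabling Conjecture holds for $b \leq 3$. If $b \leq 5$ we instead get $4 \leq v \leq 5$, and the only pair with $n \mid v$, $n \neq v$, $n \geq 2$ is $(n,v) = (2,4)$ --- exactly the outstanding case.

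So the real content is $n \neq v$, which I would prove by contradiction. Assume $n = v$. Since $p > 2$, the great web $\Lambda$ sheds exactly $p - 2 \geq 1$ edges, so it is a proper subconfiguration of $G_Q$ inside its disk $D_\Lambda$. Running the innermost-disk argument behind Theorem~\ref{thm:SchCycleExistence} inside $D_\Lambda$ yields a great $x$-cycle, which by Theorem~\ref{thm:NoNewGreatXCycle} (valid since $p > 2$) is a Scharlemann cycle $\Sigma \subseteq \Lambda$; by Lemma~\ref{lem:ScharlemannOrder} its order is $n$. With $n = v$, the cycle $\Sigma$ must run over all $v$ vertices of $\Lambda$ and bound a disk $\Delta \subseteq D_\Lambda$ with no vertex or edge of $G_Q$ inside. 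Now count edge-endpoints on $\Lambda$: each vertex of $G_Q$ meets edges at exactly $p$ points (each label occurs once, by Proposition~\ref{prop:GabaiGL}), $\Sigma$ consumes two per vertex, and at most $p - 2$ of the remaining $v(p-2)$ endpoints lie on edges leaving $\Lambda$; so $(v-1)(p-2)$ endpoints lie on edges internal to $\Lambda$, necessarily pushed into the annular collar $D_\Lambda \setminus \mathrm{int}\,\Delta$. The faces cut from this collar are bounded by $x$-cycles, hence by Theorem~\ref{thm:NoNewGreatXCycle} again by Scharlemann cycles, which by Lemma~\ref{lem:ScharlemannOrder} again have order $n = v$ and so again exhaust all $v$ vertices; one then argues that this forces the collar to be empty, i.e.\ $(v-1)(p-2) = 0$, so $v = 1$, contradicting $v \geq 2$.

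I expect the genuine obstacle to lie in that last step, i.e.\ in excluding the degenerate webs $n = v \in \{2,3,4,5\}$, because a bare edge-count does not close the argument: parallel edges can be slipped into the collar without violating planarity, and the parity constraint that $p = \vert P \cap k' \vert$ is even does not kill them either. What seems to be needed is a structural property of the Gordon--Luecke great web beyond planar bookkeeping --- a minimality or ``irreducibility'' feature of $\Lambda$ ensuring it cannot collapse to the support of a single Scharlemann cycle, or, equivalently, the observation that such a collapsed web would already present $k$ as a cable knot. Once $n \neq v$ is established, everything downstream --- the divisibility arithmetic, the $q \leq 2b$ comparison, and the enumeration leaving only $(n,v) = (2,4)$ --- is routine.
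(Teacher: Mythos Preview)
The paper does not supply a proof of this statement; it is imported wholesale as Corollary~3.2 of \cite{Zufelt14}. There is thus no in-paper argument to compare your attempt against. Your derivation of the bridge-number consequences from $n \neq v$ is correct and mirrors exactly how the paper uses the result (compare Corollary~\ref{cor:MinWebSize}): $n \geq 2$, $n \mid v$, and $n \neq v$ force $v \geq 2n \geq 4$; the parallelism of the vertices of a great web gives $v \leq q/2 \leq b$; and the enumeration for $b \leq 3$ and $b \leq 5$ is then routine.

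For the core claim $n \neq v$, you have yourself flagged the gap, and it is genuine. The assertion that ``the faces cut from this collar are bounded by $x$-cycles'' is not justified: an innermost face in the collar is bounded by a cycle on parallel vertices, but such a cycle need not be an $x$-cycle --- its edges need not all leave their tail vertices at a common label. Parallel families of edges between consecutive vertices of $\Sigma$ can populate the collar without generating any new $x$-cycle, and the endpoint count alone does not exclude them. The actual argument in \cite{Zufelt14} uses structure not reproduced in this paper, so within the scope of this paper $n \neq v$ should be treated as an external input rather than something re-derivable from the combinatorics already on the page.
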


\begin{cor}
 \label{cor:MinWebSize}
 If $\Lambda$ is a great web, $\vert V(\Lambda) \vert \geq 4$.
\end{cor}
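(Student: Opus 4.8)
The plan is to obtain the bound arithmetically from the two cited results of Zufelt, Theorem~\ref{thm:GreatWebDivisibility} and Theorem~\ref{thm:SchCycleNotWeb}, once two elementary facts are recorded: that a great web is nonempty, and that the Scharlemann order $n$ is at least $2$.

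First I would note that $\Lambda$ cannot be empty. A great web is by definition a great $(p-2)$-web, and since we are in the case $p>2$ there are $p-2\geq 1$ edges leaving $\Lambda$; an empty set of vertices has no incident edges, so $v=\vert V(\Lambda)\vert\geq 1$. Next I would record that the Scharlemann order satisfies $n\geq 2$. By Theorem~\ref{thm:SchCycleExistence}, $G_Q$ contains a Scharlemann cycle, and by Lemma~\ref{lem:ScharlemannOrder} every Scharlemann cycle of $G_Q$ has this common order $n$. A Scharlemann cycle of order $1$ would be a single loop edge at one vertex of $G_Q$ which, together with one of its corners, bounds a disk in $Q$ whose interior is disjoint from the graph; this disk exhibits the loop as a boundary-parallel arc of $\check{P}\cap\check{Q}$ in $\check{Q}$, contradicting Proposition~\ref{prop:GabaiGL}(iii). (Alternatively, $n\geq 2$ is standard from the proof of Theorem~\ref{thm:SchCycleExistence} in \cite{GordonLuecke89}.) Hence $n\geq 2$.

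With these in hand the corollary follows at once. By Theorem~\ref{thm:GreatWebDivisibility}, $n$ divides $v$, so $v$, being a positive multiple of $n$, satisfies $v\geq n$. By Theorem~\ref{thm:SchCycleNotWeb}, $v\neq n$, so $v$ is a multiple of $n$ strictly exceeding $n$, whence $v\geq 2n\geq 4$.

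The only step that needs any thought is the justification that $n\geq 2$, i.e.\ that Scharlemann cycles genuinely traverse at least two vertices rather than being loops; once that (and the trivial observation $v\geq 1$) is granted, the remainder is purely a matter of combining $n\mid v$ with $n\neq v$, so I expect no substantive obstacle.
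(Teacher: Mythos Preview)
Your proposal is correct and follows essentially the same argument as the paper: combine $n\mid v$ (Theorem~\ref{thm:GreatWebDivisibility}) with $v\neq n$ (Theorem~\ref{thm:SchCycleNotWeb}) and $n\geq 2$ to conclude $v\geq 2n\geq 4$. The paper's proof simply asserts $n\geq 2$ without comment, whereas you supply a justification via Proposition~\ref{prop:GabaiGL}(iii); this extra care (and the observation that $\Lambda$ is nonempty) is fine but not a different route.
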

\begin{proof}
 The Scharlemann order $n$ must be at least $2$, and
 $\vert V(\Lambda)\vert > n$ must be a multiple of $n$.
\end{proof}

Our method is to establish minimums for $q$ in different cases by
finding several great webs, and applying Corollary \ref{cor:MinWebSize}.

Let $V$ be a set of vertices of $\redG$.
Suppose $\sigma$ is a cycle on parallel vertices in $\redG$ bounding a disk $D$
which contains the vertices $V$. Then $D$ is a \define{$\sigma$-disk} and $V$
is a \define{$\sigma$-set}. We call $D$ \define{nontrivial} if
$V$ contains elements of the opposite sign of $\sigma$, and $\sigma$
nontrivial if it bounds no trivial $\sigma$-disk.
 Let $s\in \{+,-\}$. If every vertex
 in $V$ has sign $s$ and every edge leaving $V$ goes to a vertex of sign $-s$,
 $V$ is an \define{$(s)$-set} of vertices. An \define{$(s)$-disk}
 is a disk $D\subset \redS$ containing a nonempty connected $(s)$-set $V$
 and no other vertices, such that all edges $[V,V]$ are also contained in $D$.
 An $(s)$-set contained in an $(s)$-disk is called an \define{innermost
 $s$-set}. $V^*$ will denote the set of labels at which edges leave $V$. 

Let $V$ be a subset of vertices in $\redG$, and $L$ a subset of labels.
$\redG$ has the parallel property $\mathbb{P}(V,L)$
if for each vertex $x$ of $V$ there exists a label $y(x)$ of $L$ such that
the edge of $\redG$ incident to $x$ at $y(x)$ goes to a vertex parallel to $x$.
$\redG$ has the antiparallel property $\mathbb{A}(V,L)$ if for each label $y$
of $L$ there exists a vertex $x(y)$ of $V$ such that the edge of $G$
incident to $x$ at the label $y(x)$ connects $x$ to an antiparallel vertex.
Note that if $V_1$ is a vertex subset of $\redG$ and $V_2$ is a vertex subset
of $\lvlG$, then $\redG$ has $\mathbb{A}(V_1,V_2)$ if and only if $\lvlG$ has
$\mathbb{P}(V_2,V_1)$.

By basic graph theory,
\begin{prop}
 \label{prop:AsManyEdgesAsVertsImpliesCircuit}
 If $G$ contains no cycles, $G$ contains more vertices than edges.
\end{prop}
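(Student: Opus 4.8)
The plan is to derive this from the elementary fact that a finite forest on $n$ vertices with $c$ connected components has exactly $n-c$ edges. First I would note that a graph with no cycles is, by definition, a forest; here one must be mildly careful about the conventions for the (generally non-simple) graphs $\redG$, since a loop is a cycle of length one and a pair of parallel edges is a cycle of length two. Thus the hypothesis that $G$ contains no cycles already forces $G$ to be a simple graph in which every connected component is a tree, and the whole statement reduces to counting edges of a forest.

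Next I would establish the edge count by induction on the number $m$ of edges. In the base case $m=0$ the graph has $n$ vertices and $n$ components, so indeed $m = 0 = n-n$. For the inductive step, delete any edge $e$ from an acyclic graph $G$ with $m\geq 1$ edges; the result $G-e$ is still acyclic, and since $e$ lies on no cycle of $G$ its two endpoints lie in distinct components of $G-e$, so $G-e$ has $c+1$ components, where $c$ denotes the number of components of $G$. By the inductive hypothesis $G-e$ has $n-(c+1) = m-1$ edges, whence $m = n-c$. Because $G$ is nonempty in our setting (its fat vertices correspond to the nonempty collection of boundary curves of $\check{\redS}$) we have $c\geq 1$, so the number of edges of $G$ is $n-c\leq n-1 < n$, which is the claim.

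An equally short alternative is induction on the number of vertices: a finite acyclic graph with at least one edge has a vertex of degree one (an endpoint of a maximal path), and deleting that vertex together with its unique incident edge preserves acyclicity while removing exactly one vertex and one edge. I do not anticipate any genuine obstacle — the paper rightly flags this as basic graph theory — and the only points that merit a sentence of care are the convention for what counts as a cycle in a non-simple graph and the tacit nonemptiness of $G$, both noted above.
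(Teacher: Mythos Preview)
Your argument is correct and is the standard proof that a forest on $n$ vertices with $c$ components has $n-c$ edges, hence fewer edges than vertices. The paper itself gives no proof of this proposition at all; it simply prefaces the statement with ``By basic graph theory,'' and moves on, so your write-up is strictly more detailed than what appears in the paper.
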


\begin{prop}
 \label{prop:EnoughEdgesImpliesCircuit}
 Let $G$ be a directed graph, let $V$ be a subset of the vertices of $G$.
 If for each $v\in V$ there is a unique edge $e_v$ between vertices of $V$,
 such that the tail of $e_v$ is at $v$, then $G$ has an directed cycle on the
 vertices $V$.
\end{prop}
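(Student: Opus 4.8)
The plan is to run a standard successor-function / pigeonhole argument. First I would use the hypothesis to define a map $\phi\colon V\to V$ by letting $\phi(v)$ be the endpoint of the designated edge $e_v$ other than $v$; this is well-defined precisely because $e_v$ has its tail at $v$ and, being an edge ``between vertices of $V$,'' has its head in $V$. Note also that $V$, being a subset of the vertex set of the (finite) graph $G$ under consideration, is finite — this is the only finiteness input needed.

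Next I would fix an arbitrary $v_0\in V$ and form the forward orbit $v_0,\,v_1=\phi(v_0),\,v_2=\phi(v_1),\,\dots$, all of whose terms lie in $V$. Since $V$ is finite, some vertex is repeated; I would choose the first repetition, i.e.\ the least $j$ for which $v_j\in\{v_0,\dots,v_{j-1}\}$, say $v_j=v_i$ with $i<j$. By minimality of $j$ the vertices $v_i,v_{i+1},\dots,v_{j-1}$ are pairwise distinct. Then the edges $e_{v_i},e_{v_{i+1}},\dots,e_{v_{j-1}}$, where $e_{v_\ell}$ is directed from $v_\ell$ to $v_{\ell+1}=\phi(v_\ell)$ (and $v_j=v_i$), form a directed cycle all of whose vertices belong to $V$, as required.

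I do not expect any genuine obstacle; this is elementary graph theory (and could alternatively be deduced from Proposition \ref{prop:AsManyEdgesAsVertsImpliesCircuit}, since the $e_v$ are distinct — their tails differ — giving a subgraph on $V$ with $|V|$ edges and $|V|$ vertices, hence not acyclic). The only point requiring a little care is to ensure the object produced is an honest \emph{directed} cycle lying entirely inside the subgraph induced on $V$, which is exactly what taking the first repetition in the orbit guarantees; I would therefore present the direct orbit argument rather than invoking the earlier proposition.
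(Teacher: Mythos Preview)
Your proof is correct. The paper, however, takes exactly the alternative route you mention and set aside: it observes that the edges $e_v$ are distinct (having distinct tails), so the subgraph on $V$ with edge set $\{e_v : v\in V\}$ has $|V|$ vertices and $|V|$ edges, whence Proposition~\ref{prop:AsManyEdgesAsVertsImpliesCircuit} yields an (undirected) cycle; the uniqueness of each $e_v$ then forces consistent orientation around that cycle, since at each vertex of the cycle exactly one incident edge has its tail there. Your orbit argument is more self-contained and avoids the separate orientation step, while the paper's version is shorter by leaning on the earlier proposition. Both are equally valid for this elementary statement.
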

\begin{proof}
 Since $\vert \{e_v\} \vert = \vert V \vert$, by Proposition
 \ref{prop:AsManyEdgesAsVertsImpliesCircuit}, $G$ contains a cycle. The
 orientations of the edges in the cycle must agree by uniqueness of each
 $e_v$.
\end{proof}

\begin{cor}
\label{cor:NoXEdgesLeavingImpliesXCycle}
 Let $V\subset V_Q$ be a set of uniform sign. If there is a label $x_0\in V_P$ such that no
 edge leaves $V$ at a label $x_0$, then $V$ contains an $x_0$-cycle.
\end{cor}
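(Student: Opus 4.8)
The plan is to reduce the statement to Proposition \ref{prop:EnoughEdgesImpliesCircuit}, applied to a directed subgraph of $G_Q$ supported on the vertex set $V$ (which, as always for an $x_0$-cycle to make sense, we take nonempty). First I would extract the distinguished edges: by Proposition \ref{prop:GabaiGL}(\ref{itm:intNumberOne}) the label $x_0$ occurs exactly once on each fat vertex of $G_Q$, so for each $v\in V$ there is a unique edge $e_v$ of $G_Q$ meeting $v$ at the label $x_0$; orient $e_v$ so that its tail lies at that occurrence of $x_0$ on $v$. Because no edge leaves $V$ at the label $x_0$, the opposite endpoint of $e_v$ also lies on a vertex of $V$, so every $e_v$ is an edge of $G_Q$ between vertices of $V$.

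Next I would verify that the assignment $v\mapsto e_v$ is injective and the orientation unambiguous, which is exactly where the uniform-sign hypothesis enters. If $e_v=e_{v'}$ with $v\neq v'$, then this single edge meets $v$ at $x_0$ and $v'$ at $x_0$; but the parity rule forces the two endpoints of an edge to have opposite character, hence $v$ and $v'$ would have opposite sign, contradicting that $V$ has uniform sign. (A loop $e_v$ at $v$ is oriented unambiguously, with tail at its $x_0$-labeled end.) Thus the directed graph $G'$ with vertex set $V$ and edge set $\{e_v : v\in V\}$ has exactly $|V|$ distinct edges, and in $G'$ each $v$ is the tail of exactly one edge, namely $e_v$, which lies between vertices of $V$.

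Then Proposition \ref{prop:EnoughEdgesImpliesCircuit} applied to $G'$ produces a directed cycle $\sigma$ whose vertices all lie in $V$. As in the proof of that proposition, each vertex of $\sigma$ is the tail of exactly one edge of $\sigma$; since that edge is some $e_v$ oriented with its tail at $x_0$, every edge of $\sigma$ has its tail at the label $x_0$ on the relevant vertex. Finally, all vertices of $\sigma$ lie in $V$ and so are parallel, so $\sigma$ is an $x_0$-cycle, as required. The only genuine obstacle here is the injectivity/orientation-consistency check in the second step (so that $G'$ really has $|V|$ edges and the cycle's orientation is compatible with "tail at $x_0$"); everything else is just assembling the hypotheses of Proposition \ref{prop:EnoughEdgesImpliesCircuit} and reading off the extra structure its proof already supplies.
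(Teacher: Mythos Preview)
Your proof is correct and follows essentially the same approach as the paper: both apply Proposition~\ref{prop:EnoughEdgesImpliesCircuit} to the edges of $G_Q(\{x_0\})$ restricted to $V$. You have simply made explicit the details the paper leaves implicit, in particular the injectivity of $v\mapsto e_v$ via the parity rule and uniform-sign hypothesis, which is exactly what is needed to invoke the proposition.
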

\begin{proof}
 Apply Proposition \ref{prop:EnoughEdgesImpliesCircuit} to the components of
 $G_Q(\{x_0\})$ on the vertices in $V$.
\end{proof}
 
 \begin{lem}
 \label{lem:SDiskProperties}
  Let $V$ be an innermost $(s)$-set in $G_Q$. Then all of the following
  hold:
  \begin{enumerate}
   \item $\vert V \vert \geq 2$,
   \item $V^* \supset V_r$,
   \item $G_P$ has $\mathbb{P}(V_r, V)$.
  \end{enumerate}
  Furthermore, if $V$ does not have a Scharlemann cycle, then $V^* = V_P$.
 \end{lem}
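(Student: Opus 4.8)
The plan is to settle the three numbered items and then the final sentence, in that order, leaning throughout on the fact that $V$ sits inside an $(s)$-disk $D$ which contains no vertex besides those of $V$ and which contains every edge of $[V,V]$. For item~(1) I would argue by contradiction. If $V=\{v\}$ then every edge of $G_Q$ with both endpoints on $V$ is a loop at $v$, and all such loops lie in $D$; since $v$ is the only fat vertex in $D$, an innermost such loop cuts a subdisk out of $D$ containing no fat vertices, so the corresponding arc of $\check P\cap\check Q$ would be boundary-parallel in $\check Q$, contradicting Proposition~\ref{prop:GabaiGL}(iii). Hence $v$ carries no loop, so every edge at $v$ runs to another vertex, necessarily antiparallel because $V$ is an $(s)$-set; thus $v$ is isolated, contradicting Lemma~\ref{lem:NoIsolatedVertices}. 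Therefore $|V|\ge 2$.

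For item~(2), suppose some regular vertex $x_0\in V_r$ lies outside $V^*$, i.e.\ no edge of $G_Q$ leaves $V$ at the label $x_0$. Since $V$ has uniform sign, Corollary~\ref{cor:NoXEdgesLeavingImpliesXCycle} produces an $x_0$-cycle $\sigma$ supported on vertices of $V$. Its edges are $[V,V]$-edges and so lie in $D$, and its vertices lie in $V\subseteq D$; hence $\sigma$ bounds a subdisk $D_\sigma\subseteq D$, and every vertex inside $D_\sigma$ is a vertex of $V$ and therefore parallel to $\sigma$. Thus $\sigma$ is a \emph{great} $x_0$-cycle. Because $p>2$, Theorem~\ref{thm:NoNewGreatXCycle} rules out a new great $x$-cycle, so $\sigma$ must be a Scharlemann cycle; but every Scharlemann cycle of $G_Q$ is a $1,2$-Scharlemann cycle (Lemma~\ref{lem:ScharlemannOrder} together with the labelling convention), which forces $x_0\in\{1,2\}$ and contradicts $x_0\in V_r$. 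Hence $V^*\supseteq V_r$.

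Item~(3) I would obtain from item~(2) via the equivalence between the parallel and antiparallel properties recorded above: $G_P$ has $\mathbb{P}(V_r,V)$ exactly when $G_Q$ has $\mathbb{A}(V,V_r)$. Given a label $y\in V_r$ of $G_Q$, item~(2) supplies an edge of $G_Q$ leaving $V$ at $y$; its endpoint on $V$ has sign $s$ and its other endpoint has sign $-s$ because $V$ is an $(s)$-set, so this edge joins a vertex of $V$ at label $y$ to an antiparallel vertex, which is precisely the witness required by $\mathbb{A}(V,V_r)$. For the final sentence, assume $V$ contains no Scharlemann cycle; since $V^*\supseteq V_r=V_P\setminus\{1,2\}$ it remains only to place $1$ and $2$ in $V^*$. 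If $1\notin V^*$ then, exactly as in item~(2), Corollary~\ref{cor:NoXEdgesLeavingImpliesXCycle} yields a $1$-cycle on vertices of $V$ that is in fact a great $1$-cycle, and Theorem~\ref{thm:NoNewGreatXCycle} forces it to be a Scharlemann cycle lying in $V$, a contradiction; the same argument with the label $2$ gives $2\in V^*$, so $V^*=V_P$.

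The one point needing care is the passage from ``cycle on vertices of $V$'' to ``\emph{great} cycle'' in item~(2) and in the final sentence: this is the only place where the $(s)$-disk hypothesis really enters, so the write-up must make explicit that $\sigma$ together with all its edges lies in $D$, so that the subdisk $\sigma$ bounds may be taken inside $D$ and its interior vertices are automatically parallel to $\sigma$. Everything else is a direct invocation of Proposition~\ref{prop:GabaiGL}, Lemma~\ref{lem:NoIsolatedVertices}, Corollary~\ref{cor:NoXEdgesLeavingImpliesXCycle}, and Theorem~\ref{thm:NoNewGreatXCycle}.
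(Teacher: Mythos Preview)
Your argument is correct and follows essentially the same route as the paper: item~(2) and the final sentence via Corollary~\ref{cor:NoXEdgesLeavingImpliesXCycle} plus Theorem~\ref{thm:NoNewGreatXCycle}, and item~(3) via the $\mathbb{A}/\mathbb{P}$ duality. The only real difference is in item~(1): the paper observes that a loop on the lone vertex $v$ is itself a (new) great $x$-cycle and invokes Theorem~\ref{thm:NoNewGreatXCycle}, whereas you rule out loops more directly by noting that an innermost loop would give a boundary-parallel arc contradicting Proposition~\ref{prop:GabaiGL}(iii); both are valid, and your version has the mild advantage of not relying on the implicit fact that Scharlemann cycles have order at least~$2$.
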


\begin{proof}
 Suppose $x_0\in V_r$ is not in $V^*$. Then there is a new $x_0$-cycle $\Sigma$ 
 on $V$, by Corollary \ref{cor:NoXEdgesLeavingImpliesXCycle}.
 By definition of $(s)$-disk, $\Sigma$ is a great new $x_0$-cycle,
 a contradiction with Theorem \ref{thm:NoNewGreatXCycle}.
 Therefore no such $x_0$ exists.
 If $V = \{v\}$, then since $v$ cannot be isolated
 (Lemma \ref{lem:NoIsolatedVertices}),
 there is a loop on $v$, which must also be a new great $x$-cycle.
 Thus $\vert V \vert \geq 2$.
 Since $V^* \supset V_r$, $G_Q$ has $\mathbb{A}(V, V_r)$, so $G_P$ has
 $\mathbb{P}(V_r,V)$.
 
 Suppose $V$ does not have a Scharlemann cycle. Then $V$ cannot have any
 $x$-cycles, so $V^* = V_P$, by Corollary
 \ref{cor:NoXEdgesLeavingImpliesXCycle}.
\end{proof}

\begin{lem}
 \label{lem:EdgesInSDisks}
 Let $V$ be an innermost $(s)$-set. If $\vert [V,V_Q\setminus V] \vert = p - 2$,
 then for each $x\in V_r$, there are precisely $\vert V \vert - 1$ edges from
 $[V, V]$ adjacent to $x$.
\end{lem}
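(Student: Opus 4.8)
The plan is to fix a regular label of $G_Q$, count the edge-endpoints carrying that label among the vertices of $V$, and pigeonhole the count of ``edges leaving $V$'' against the hypothesis $\vert[V,V_Q\setminus V]\vert = p-2$. First I would fix $x\in V_r$, i.e.\ a regular label of $G_Q$. By Proposition~\ref{prop:GabaiGL}(\ref{itm:intNumberOne}) the label $x$ occurs exactly once on each fat vertex of $G_Q$, and a single edge-endpoint sits at each such occurrence; hence the $\vert V\vert$ vertices of $V$ together carry exactly $\vert V\vert$ edge-endpoints labelled $x$. Every edge through one of these endpoints either lies in $[V,V]$ or leaves $V$ (lies in $[V,V_Q\setminus V]$), since $V_Q$ is the entire vertex set of $G_Q$.

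The key preliminary observation is that no single edge of $[V,V]$ can absorb two of these $\vert V\vert$ endpoints at once. Indeed, an edge of $[V,V]$ joins two vertices of $V$, all of which are parallel (of common sign $s$); by the parity rule its two ends have opposite character, so, since the vertex-sign contribution is $s$ at both ends, the two endpoint-labels have opposite parity and are in particular distinct. Thus each edge of $[V,V]$ meets $x$ in at most one endpoint. Writing $n(x)$ for the number of edges of $[V,V]$ incident to the label $x$, and $f_x$ for the number of edges of $[V,V_Q\setminus V]$ whose $V$-side endpoint is labelled $x$, the partition of the $\vert V\vert$ endpoints then gives the exact count $n(x)+f_x=\vert V\vert$.

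It remains to pin down $f_x=1$ for every $x\in V_r$, and here I would pigeonhole. Summing $f_x$ over all $p$ labels of $G_Q$ counts each edge of $[V,V_Q\setminus V]$ exactly once (such an edge is not a loop, so it has a unique $V$-endpoint with a unique label), so $\sum_x f_x=\vert[V,V_Q\setminus V]\vert=p-2$ by hypothesis. Meanwhile Lemma~\ref{lem:SDiskProperties} gives $V^*\supset V_r$, that is, $f_x\geq 1$ for each of the $p-2$ regular labels. Since $\vert V_r\vert=p-2$, these inequalities are forced to be equalities: $f_x=1$ for every $x\in V_r$ (and $f_x=0$ at the two special labels). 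Substituting into $n(x)+f_x=\vert V\vert$ yields $n(x)=\vert V\vert-1$ for all $x\in V_r$, as desired.

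The only step that is not bookkeeping is the parity-rule argument forbidding an edge of $[V,V]$ from having both endpoints at the same label; this is where I expect to need a little care, although it is short once one notes that the vertices of $V$ are all parallel. Everything else follows mechanically from Proposition~\ref{prop:GabaiGL}(\ref{itm:intNumberOne}), the hypothesis on $\vert[V,V_Q\setminus V]\vert$, and Lemma~\ref{lem:SDiskProperties}, together with the elementary facts that $\vert V_P\vert=p$ and $\vert V_r\vert=p-2$.
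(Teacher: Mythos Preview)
Your proof is correct and follows essentially the same approach as the paper's: invoke Lemma~\ref{lem:SDiskProperties} to get $V^*\supset V_r$, pigeonhole against $\vert[V,V_Q\setminus V]\vert=p-2$ to conclude exactly one edge leaves $V$ at each regular label, and then count the remaining $\vert V\vert-1$ endpoints at label $x$ as edges of $[V,V]$. Your explicit parity-rule justification that an edge of $[V,V]$ cannot carry the same label $x$ at both ends is a welcome elaboration that the paper's proof leaves implicit.
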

\begin{proof}
 By Lemma \ref{lem:SDiskProperties}, $V^* \supset V_r$. So if
 $\vert [V,V_Q\setminus V] \vert = p - 2$, precisely one edge leaves $V$ from
 each label $x\in V_r$. Fix $x_0\in V_r$. Each of the $\vert V\vert$ vertices in
 $V$ have an $x_0$ label, so $\vert V \vert - 1$ of the edges adjacent to $x_0$
 labels must be in $[V,V]$.
\end{proof}

Note that a consequence of Corollary \ref{cor:NoXEdgesLeavingImpliesXCycle}
and Theorem \ref{thm:NoNewGreatXCycle}
is that if $\Lambda$ is a great web, there is precisely one edge leaving
$\Lambda$ from each label in $V_r$, and $\Lambda$ must contain a 
Scharlemann cycle. Lemma \ref{lem:SDiskProperties}, meanwhile, shows
that for an innermost $(s)$-set $V$, there is at least one edge leaving
$V$ from each label in $V_r$. The two kinds of sets are similar, but an
innermost $(s)$-set $V$ has the property that all edges leaving $V$ go to
antiparallel vertices but there are an unknown number of such edges,
while a great web $\Lambda$ has the property that those number and
configuration of edges leaving $\Lambda$ are well known, but those
edges may go to parallel vertices.

\begin{lem}
 \label{lem:AntiparallelEdgesInQGiveCircuitInP}
 Let $V$ be any set of vertices in $G_Q$. Let $E_{antipar}$ be the set of edges
 joining vertices of $V$ to antiparallel vertices (possibly also in $V$).
 If $\vert E_{antipar} \vert > p - 2$, then $G_P(V)$ has a cycle on parallel
 vertices.
 
 Furthermore, if for each $x\in V_r$, there is a $v\in V$ and $e\in E_{antipar}$
 such that $e$ meets $v$ at label $x$, then the cycle can be oriented with
 the tail of each edge meeting the vertex in $G_P(V)$ at a label in $V$.
\end{lem}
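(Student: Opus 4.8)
The plan is to push the whole statement into the graph $G_P$ via the parity rule, derive the cycle from a spanning-forest count, and recover the orientation by re-running that count after pre-orienting the edges of $E_{antipar}$.

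First I would record the translation to $G_P$. If $e\in E_{antipar}$ joins a vertex $v\in V$ to an antiparallel vertex $w$ of $G_Q$, then, viewed as an edge of $G_P$, it is incident to two fat vertices and carries the labels $v$ and $w$ at its two endpoints. Since $v$ and $w$ have opposite sign as vertices of $G_Q$, they have opposite parity as labels of $G_P$, so the parity rule forces the two fat vertices of $G_P$ met by $e$ to be parallel; in other words, every edge of $E_{antipar}$ joins parallel vertices of $G_P$. Moreover one of the labels of $e$ lies in $V$, so $e$ is an edge of $G_P([V,V_Q])=G_P(V)$. Finally, the hypothesis ``$e$ meets $v$ at label $x$'' says precisely that, in $G_P$, the edge $e$ is incident to the fat vertex $x$ and carries the $V$-label $v$ at that endpoint.

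For the cycle, I would begin by noting that the special vertices $1$ and $2$ of $G_P$ are antiparallel: an edge of a $1,2$-Scharlemann cycle of $G_Q$ (one exists by Theorem \ref{thm:SchCycleExistence}, and all Scharlemann cycles lie on the same labels by Lemma \ref{lem:ScharlemannOrder}) joins parallel vertices and runs from label $1$ to label $2$, so the parity rule makes labels $1$ and $2$ have opposite parity in $G_Q$, i.e.\ the vertices $1$ and $2$ of $G_P$ have opposite sign. Thus both sign classes of $G_P$, say $U^{+}$ and $U^{-}$, are nonempty, so any acyclic subgraph of $G_P$ all of whose edges join parallel vertices is a forest meeting each class in a subtree, hence has at most $(|U^{+}|-1)+(|U^{-}|-1)=p-2$ edges (cf.\ Proposition \ref{prop:AsManyEdgesAsVertsImpliesCircuit}). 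Since $|E_{antipar}|>p-2$, the subgraph $G_P(E_{antipar})$ --- a subgraph of $G_P(V)$ by the previous paragraph --- contains a cycle; as its edges join parallel vertices, the entire cycle lies in a single sign class and is therefore on parallel vertices. This proves the first assertion.

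For the ``furthermore'', I would orient each edge of $E_{antipar}$ so that its tail lies at a $V$-label (making an arbitrary choice when both endpoints qualify), obtaining a directed graph $\vec D$ whose edges still join only parallel vertices of $G_P$. By the hypothesis and the translation above, every regular vertex of $G_P$ has out-degree at least one in $\vec D$. Since $U^{+}$ and $U^{-}$ each contain exactly one special vertex and are otherwise made up of regular vertices, if one of the special vertices also has positive out-degree then one of the two sign classes has every vertex of positive out-degree, and walking along out-edges within that class (or invoking Proposition \ref{prop:EnoughEdgesImpliesCircuit}) produces a directed cycle --- on parallel vertices, with the tail of every edge at a $V$-label --- which is the desired conclusion. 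The case I expect to be the main obstacle is the one in which both vertices $1$ and $2$ are sinks of $\vec D$: here I would use that $|E_{antipar}|>p-2$ forces some sign class $U$ to carry strictly more edges of $E_{antipar}$ than it has non-special vertices, so that after choosing one designated out-edge per regular vertex of $U$ there remains a surplus edge of $E_{antipar}$ inside $U$; combining this surplus edge with the restricted possibilities for edges incident to the special vertex of $U$ --- the labels carrying Scharlemann cycles in $G_Q$ --- one should obtain the required directed cycle, either by furnishing that special vertex with an outgoing edge at a $V$-label or by re-routing a designated out-edge around it. This bookkeeping near the special vertices is the substantive part of the argument.
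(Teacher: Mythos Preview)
Your argument for the first assertion is correct and is essentially the paper's own: both translate $E_{antipar}$ via the parity rule into edges of $G_P$ joining parallel vertices, and then count. The paper pigeonholes onto one sign class of size $p/2$ to find at least $p/2$ edges among $p/2$ vertices; you instead bound a parallel-edge forest by $(|U^{+}|-1)+(|U^{-}|-1)=p-2$. These are equivalent, and your check that $1$ and $2$ are antiparallel is a nice extra detail (in this setting $|U^{+}|=|U^{-}|=p/2$ anyway).

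For the ``furthermore'' clause, note first that the paper's proof does not address it at all, so you are attempting more than the paper supplies. Your Case~1 (some special vertex has an out-edge in $\vec D$) is fine, but Case~2 has a genuine gap: a surplus edge in a sign class does \emph{not} force a directed cycle. Concretely, take $p=6$, $U^{+}=\{2,a,b\}$, $U^{-}=\{1,c,d\}$, and oriented edges $a\to 2$, $a\to b$, $b\to 2$, $c\to 1$, $d\to 1$ (each tail at a $V$-label, each head at a non-$V$ label). Every regular vertex has positive out-degree, both special vertices are sinks, there are $5>p-2$ edges of $E_{antipar}$, and yet there is no directed cycle anywhere --- even though the undirected triangle on $a,b,2$ witnesses the first part of the lemma. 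Your appeal to ``restricted possibilities for edges incident to the special vertex --- the labels carrying Scharlemann cycles in $G_Q$'' is too vague to exclude such configurations; nothing in the stated hypotheses of the lemma constrains how the $V$-labels are distributed near the special vertices of $G_P$, and Scharlemann-cycle structure in $G_Q$ does not obviously furnish an out-edge at $1$ or $2$. If you want to salvage this clause you will need a genuinely different mechanism or an additional hypothesis, and it is worth checking whether the clause is ever actually invoked downstream in the paper.
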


\begin{proof}
 If $\vert E_{antipar} \vert > p - 2$, then $G_P(V)$ contains more than
 $p-2$ edges between parallel vertices. Without loss of generality we may
 therefore assume that $G_P(V)$ contains at least $\frac{p}{2}$ edges between
 the $\frac{p}{2}$ positive vertices.
 Let $G$ be the subgraph of $G_P(V)$ obtained by
 discarding edges between antiparallel vertices. Some positive component of $G$
 has at least as many edges as vertices, and by Proposition
 \ref{prop:AsManyEdgesAsVertsImpliesCircuit}, the component will have a cycle.
\end{proof}

Let $X,Y\subset V_Q\times V_P$, $r\in V_P$. A \define{$(X \to Y \to r)$ tree}
is a subtree of $G_P$ rooted at $r$ such that each edge is directed from
a (vertex, label) pair in $X$ to a (vertex, label) pair in $Y$.
If $V,W\subset V_Q$, a $(V\to W\to r)$ tree means a
$(V\times V_P\to W\times V_P\to r)$ tree.
If $X\subset V_Q\times V_P$, a $(X\to r)$ tree means a 
$(X\to V_Q\times V_P\to r)$ tree.

\begin{lem}
 \label{lem:APropertyImpliesCircuitsOrTrees}
 Let $V$ be an innermost $(s)$-set in $G_Q$.
 Let $W$ be the vertices in $V_Q \setminus V$ which have edges to $V$.
 Then any component of $G_P([V,W])$ that is not in a $(V\to W \to r)$ tree
 on parallel vertices with $r$ one of the special vertices, must have a
 cycle on parallel vertices.
\end{lem}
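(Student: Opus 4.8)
The plan is to extract from the parity rule and Lemma~\ref{lem:SDiskProperties} enough rigidity that $G_P([V,W])$ behaves like a functional graph, and then argue by counting edges. The first observation is that \emph{every edge of $G_P([V,W])$ joins parallel vertices}. Since $V$ is an $(s)$-set, every edge of $G_Q$ leaving $V$ ends on a vertex of sign $-s$, so $W$ consists entirely of vertices of sign $-s$, while $V$ consists of vertices of sign $s$. In $G_P$ this says a label lying in $V$ has parity $s$ and a label lying in $W$ has parity $-s$; if an edge of $[V,W]$ meets a vertex $u$ of $G_P$ at a label $y\in V$ and meets $u'$ at a label $y'\in W$, the parity rule gives char$(y,u)=-\,$char$(y',u')$, i.e. $(s)(\text{sign }u)=-(-s)(\text{sign }u')$, whence $u$ and $u'$ are parallel. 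Thus every component of $G_P([V,W])$ is monochromatic, and for such a component ``having a cycle on parallel vertices'' means simply ``having a cycle''.

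Next I would give each regular vertex a distinguished outgoing edge. By Lemma~\ref{lem:SDiskProperties}, $G_P$ has $\mathbb{P}(V_r,V)$, so for each $x\in V_r$ there is a label $y(x)\in V$ such that the edge $e_x$ of $G_P$ meeting $x$ at $y(x)$ runs to a vertex $x'$ parallel to $x$. Running the parity computation above on the two ends of $e_x$ shows that the label of $e_x$ at $x'$ has parity $-s$, hence lies outside $V$; read in $G_Q$, the edge $e_x$ then runs from a vertex of $V$ to a vertex outside $V$, so that second label lies in $W$. Hence $e_x\in[V,W]$, it meets $x$ at a $V$-label, and it meets the parallel vertex $x'$ at a $W$-label. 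Because a $[V,W]$-edge carries a $V$-label at exactly one end, the edges $e_x$ ($x\in V_r$) are pairwise distinct. From now on orient every edge of $G_P([V,W])$ from its $V$-label end to its $W$-label end; then every regular vertex has out-degree at least $1$, realized by $e_x$.

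Now fix a component $C$ of $G_P([V,W])$ and suppose it has no cycle on parallel vertices; then $C$ has no cycle at all, so $C$ is a tree with $|V(C)|-1$ edges. If $C$ contained no special vertex, every vertex of $C$ would be regular and would contribute its own edge $e_x\subset C$, giving at least $|V(C)|$ edges; this contradicts Proposition~\ref{prop:AsManyEdgesAsVertsImpliesCircuit}, so $C$ contains a special vertex. If $C$ contains exactly one special vertex $r$, then the $|V(C)|-1$ distinct edges $e_x$ with $x\in V_r\cap V(C)$ are all the edges of $C$; in the chosen orientation each regular vertex has out-degree exactly $1$ and $r$ has out-degree $0$, so the acyclic graph $C$ is an in-tree rooted at $r$ with every edge directed toward $r$ from its $V$-label end to its $W$-label end. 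This is exactly a $(V\to W\to r)$ tree on parallel vertices with $r$ special, as required.

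The remaining case, which I expect to be the crux, is when $C$ contains both special vertices $1$ and $2$. Then $C$ is a tree carrying exactly one edge $f$ beyond the $e_x$, and the $e_x$ alone split $V(C)$ into an in-tree rooted at $1$ and an in-tree rooted at $2$, linked through $f$. One must show that $V(C)$ nonetheless lies in a single $(V\to W\to r)$ tree in $G_P$ with $r\in\{1,2\}$; equivalently, that one in-tree can be rehung beneath the other with every edge still directed from its $V$-label end toward the chosen root. Making the root-ward direction agree with the $V$-to-$W$ direction along $f$ --- and supplying, where it does not, an auxiliary $G_P$-edge meeting the ``wrong'' special vertex at a $V$-label, which exists because each label occurs exactly once on each fat vertex by Proposition~\ref{prop:GabaiGL}(\ref{itm:intNumberOne}) --- is the delicate step; the rest is the edge count above.
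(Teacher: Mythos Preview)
Your approach is the paper's approach: give each regular vertex an outgoing $[V,W]$-edge from its $V$-label end and follow these edges; a component with no such path to a special root must cycle. Your treatment of the cases where $C$ contains zero or one special vertex is correct and in fact more carefully written than the paper's own proof, which compresses the whole argument into two sentences and only states the negative case.

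The gap is your third case, which you flag as ``the crux'' and do not finish. In fact that case is vacuous. You already proved that every edge of $G_P([V,W])$ joins parallel vertices, so each component $C$ is monochromatic. But the special vertices $1$ and $2$ are \emph{antiparallel} in $G_P$: by convention every Scharlemann cycle in $G_Q$ is a $1,2$-Scharlemann cycle, and its edges run between $1$-labels and $2$-labels on parallel $G_Q$-vertices; the parity rule then forces parity$(1)=-\,$parity$(2)$. (This is also why Theorem~\ref{thm:HoffmanTree} speaks of a \emph{negative} $(V\to W\to 1)$ tree and a \emph{positive} $(V\to W\to 2)$ tree.) Hence a monochromatic component can contain at most one of $1,2$, and your analysis of the first two cases already finishes the proof. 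Delete the final paragraph and replace it with this one-line observation.
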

\begin{proof}
 By Lemma \ref{lem:SDiskProperties}, $V^* \supset V_r$, which means that
 $\vert [V, W] \vert \geq p - 2$. Since $V^* \supset V_r$, for each $x\in V_r$
 there exists an edge $e_x$ leaving $x$ at a $V$ label and going to a parallel
 vertex at a $W$ label.
 Suppose that there does not exist
 a path from some negative $v_0\in V_r$ to $1$ along edges with
 tails at labels in $V$ and heads at labels in $W$. Since every
 $v\in V_r$ has such an edge, the absence of such a path
 implies that there is an oriented cycle of such edges.
\end{proof}

\begin{lem}
\label{lem:SigmaDisksHaveSDisks}
 Nontrivial $\sigma$-disks contain $(s)$-disks.
\end{lem}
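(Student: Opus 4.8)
The plan is to carve an $(s)$-disk out of a nontrivial $\sigma$-disk $D$ by an innermost-disk argument inside the $2$-sphere $\redS$. Write $s$ for the common sign of the vertices of the cycle $\sigma$, and let $V$ be the $\sigma$-set, that is, the vertices of $\redG$ lying in the interior of $D$; nontriviality of $D$ means $V$ contains a vertex of sign $-s$. The main topological input is that no edge of $\redG$ can cross the cycle $\sigma = \partial D$: consequently every edge joining two vertices of $V$ lies entirely inside $D$, while every edge joining a vertex of $V$ to a vertex outside $V$ must terminate on a fat vertex of $\sigma$, hence runs to a vertex of sign $s$.

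I would first note that suitable subsets exist: any connected component $W$ of the subgraph of $\redG$ induced on the sign-$(-s)$ vertices of $V$ is a connected $(-s)$-set, since an edge leaving $W$ cannot reach another sign-$(-s)$ vertex of $V$ (that would put it in the same component) and otherwise goes to a sign-$s$ vertex of $V$ or to a vertex of $\sigma$, in both cases to an antiparallel vertex; moreover $W\neq\emptyset$ by nontriviality. More generally, since an $(t)$-set admits no edge to a parallel vertex outside itself, every connected $(t)$-set contained in the interior of $D$ is exactly a connected component, in $\redG$, of the vertices of sign $t$. For such a set $W$, of sign $t_W$, set $\Gamma_W = W\cup[W,W]$; this is a connected graph embedded in $\redS\cong S^2$, so each complementary region is an open disk. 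Let $R_0$ be the complementary region of $\Gamma_W$ containing $\sigma$ and put $D_W = \redS\setminus R_0$, a closed disk contained in the interior of $D$ which contains $W$ together with every edge of $[W,W]$.

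Now choose $W$, among all connected $(t)$-sets contained in the interior of $D$ (with $t$ of either sign), so as to minimize $|V_\redS\cap D_W|$; this is possible since the family is finite and, by the previous paragraph, nonempty. I claim $D_W$ is then an $(s)$-disk, which proves the lemma. If not, $D_W$ contains a vertex $u\notin W$, necessarily lying in some complementary region $R\neq R_0$ of $\Gamma_W$, with $R$ in the interior of $D$. Let $Y$ be the connected component of $u$ in the subgraph of $\redG$ induced on the vertices of the same sign as $u$; by the same reasoning $Y$ is a connected $(t)$-set for an appropriate sign. Since $Y$ and $W$ are distinct components of (possibly different) uniform-sign vertex sets, they are disjoint, and distinct edges are disjoint, so $\Gamma_Y$ is disjoint from $\Gamma_W$; being connected and meeting $R$, it lies inside the open disk $R$, and hence so does every vertex of $Y$. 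Consequently the region $R_0^Y$ of $\redS\setminus\Gamma_Y$ containing $\sigma$ contains all of $\redS\setminus\overline{R}$, and also contains $\partial R$, so $D_Y = \redS\setminus R_0^Y\subseteq R$. Therefore $|V_\redS\cap D_Y|\le |V_\redS\cap R| < |W| + |V_\redS\cap R| \le |V_\redS\cap D_W|$, contradicting minimality of $W$.

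The parity/character bookkeeping and the production of the candidate sets are routine. The step I expect to demand real care is the topology in the last paragraph — that each complementary region of the connected graph $\Gamma_W$ in $S^2$ is an open disk, that $\Gamma_Y$ genuinely cannot escape the region $R$ (this uses disjointness of distinct edges together with $Y\cap W=\emptyset$), and that $\partial R\subseteq R_0^Y$ — since these are exactly what force the vertex count to drop strictly. One should also check that the degenerate configurations (for instance $W$ a single vertex with a loop, or $\Gamma_W$ whose complement consists of only two disks) are covered by the same estimate.
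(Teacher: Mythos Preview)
Your argument is correct and is the same innermost-descent idea as the paper's proof, recast as a single minimization. The paper iterates: inside a nontrivial $\sigma_i$-disk it picks an $(s_i)$-set $V_i\subset L_i$; if $V_i$ is not innermost it finds a cycle $\sigma_{i+1}$ on $V_i$ bounding a strictly smaller nontrivial $\sigma_{i+1}$-disk $D_{i+1}\subset D_i$, and repeats, terminating because the successive cycles are pairwise disjoint. Your version skips the intermediate cycles entirely and minimizes $|V_\redS\cap D_W|$ directly over connected $(t)$-sets $W$ in $\operatorname{int} D$; this has the minor advantage of handling uniformly the case where the stray vertex $u\in D_W\setminus W$ has the \emph{same} sign as $W$ (in the paper's phrasing the new disk is only guaranteed nontrivial if $V_i$ is taken to contain all sign-$s_i$ vertices of $L_i$, which is implicit there). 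One cosmetic slip: when you write ``$D_W$ is then an $(s)$-disk'' you mean a $(t_W)$-disk for the sign $t_W$ of $W$, not the sign $s$ of $\sigma$ that you fixed at the outset.
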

\begin{proof}
 Let $D_i$ be a nontrivial $\sigma_i$-disk containing the $\sigma_i$-set
 $L_i$, and let $V_i$ be an $(s_i)$-set contained in $L_i$. Note that
 such an $(s_i)$-set must exist, or $D_i$ is trivial. If $V_i$ is not
 contained in an $(s_i)$-disk, then there is a cycle $\sigma_{i + 1}$ on
 $V_i$ that bounds a nontrivial $\sigma_{i + 1}$-disk $D_{i + 1}\subset D_i$.
 The $\sigma_{i + 1}$-set $L_{i + 1}$ contains an $(s_{i + 1})$-set
 $V_{i + 1}$.
 
 Since $\sigma_i \cap \sigma_{i + 1} = \emptyset$, this can only be
 repeated finitely many times, until an $(s_n)$-set $V_n$ is reached that is
 contained in an $(s_n)$-disk.
\end{proof}

\begin{cor}
\label{cor:NewXCyclesHaveSDisks}
 If $\Sigma$ is a new $x$-cycle, any $\Sigma$-disk contains an $(s)$-disk.
\end{cor}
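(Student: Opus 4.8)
The plan is to reduce Corollary \ref{cor:NewXCyclesHaveSDisks} to Lemma \ref{lem:SigmaDisksHaveSDisks} by showing that a $\Sigma$-disk bounded by a new $x$-cycle is a \emph{nontrivial} $\sigma$-disk in the sense defined just before that lemma. Recall that a new $x$-cycle is, by definition, an $x$-cycle which is not a Scharlemann cycle; and an $x$-cycle is a directed cycle on parallel vertices. So a $\Sigma$-disk $D$ is automatically a $\sigma$-disk in the formal sense (take $\sigma = \Sigma$, a cycle on parallel vertices, and $V$ the $\sigma$-set of vertices it encloses). It then suffices to verify that $D$ is nontrivial, i.e. that the $\Sigma$-set $V$ contains a vertex whose sign is opposite that of $\Sigma$.

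The key step is this nontriviality check. Suppose for contradiction that every vertex inside the $\Sigma$-disk $D$ is parallel to $\Sigma$. Then $\Sigma$ is an $x$-cycle bounding a disk all of whose interior vertices are parallel to the cycle, which is precisely the definition of a \define{great $x$-cycle}. Since $\Sigma$ is a \emph{new} $x$-cycle, it is by definition not a Scharlemann cycle, so $\Sigma$ is a new great $x$-cycle. This contradicts Theorem \ref{thm:NoNewGreatXCycle}, which (using our standing assumption $p > 2$) rules out new great $x$-cycles in $G_Q$. Hence $D$ must contain a vertex of sign opposite to $\Sigma$, so $D$ is a nontrivial $\sigma$-disk.

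Having established that $D$ is a nontrivial $\sigma$-disk, I would simply invoke Lemma \ref{lem:SigmaDisksHaveSDisks}, which states exactly that nontrivial $\sigma$-disks contain $(s)$-disks, to conclude that $D$ contains an $(s)$-disk, completing the proof.

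I do not anticipate a serious obstacle here; the only thing to be careful about is the bookkeeping between the informal geometric notion of "the disk bounded by $\Sigma$" and the formal definitions of $\sigma$-disk/$\sigma$-set/nontrivial, and making sure the "new" hypothesis is used precisely where it is needed — namely to upgrade "great $x$-cycle" to "new great $x$-cycle" so that Theorem \ref{thm:NoNewGreatXCycle} applies. One should also note that the standing assumption $p>2$ (in force since the start of Section 3) is what makes Theorem \ref{thm:NoNewGreatXCycle} available.
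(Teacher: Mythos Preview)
Your proposal is correct and is exactly the intended argument: the paper states this as an immediate corollary of Lemma \ref{lem:SigmaDisksHaveSDisks} with no separate proof, and the only content is precisely your observation that a trivial $\Sigma$-disk would make $\Sigma$ a new great $x$-cycle, contradicting Theorem \ref{thm:NoNewGreatXCycle}.
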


\subsection{Dual Orientation}

 \begin{figure}[h]
  \centering
  \includegraphics[width=0.8\textwidth]{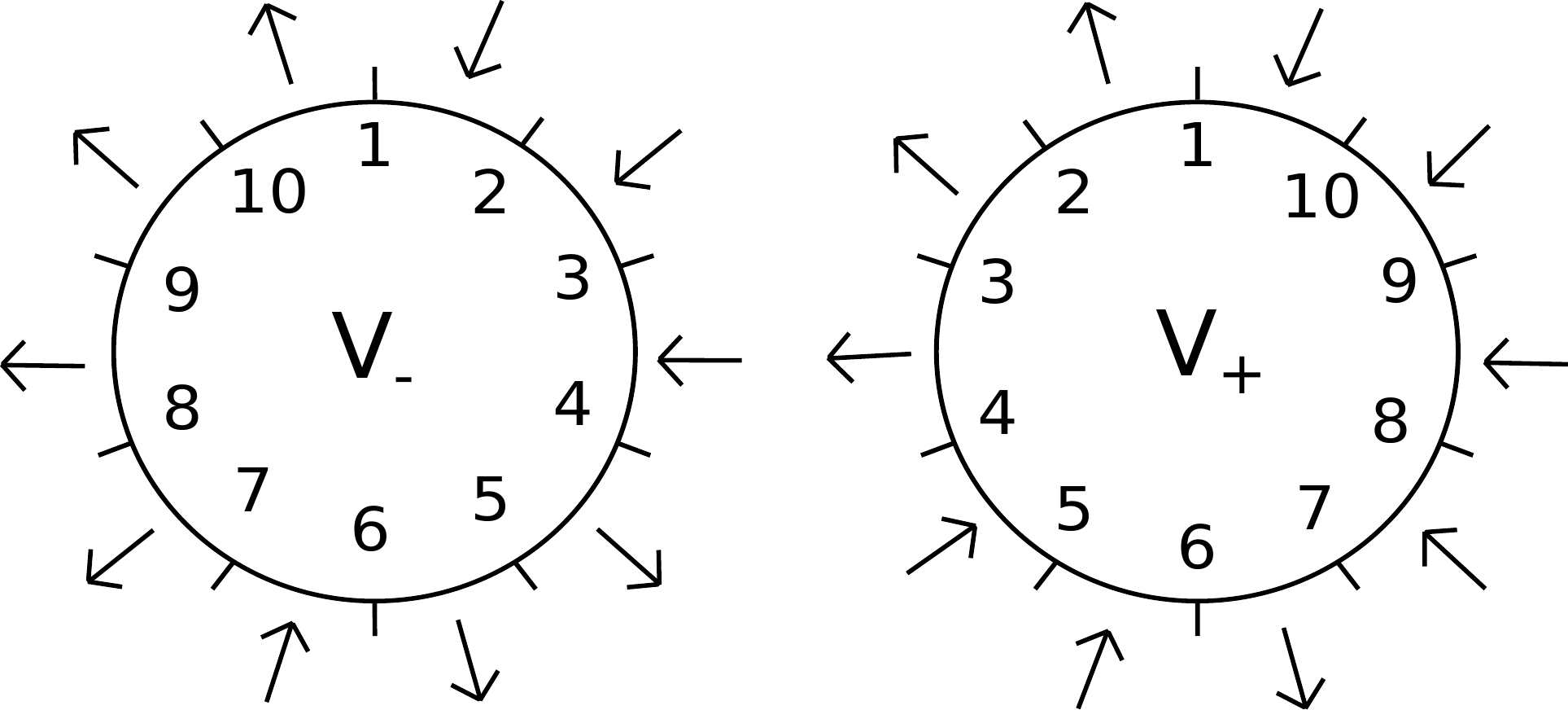}
  \caption[Model Vertices.]{Model vertices. Note that $A = \{4,7\}$, $C = \{1,6\}$ on both vertices.}
  \label{fig:ModelVertices}
 \end{figure}
A pair of \define{model fat vertices} are abstract fat vertices $V_\pm$ with
marks on the boundary representing edges incident to each vertex, labels at
each mark, and a sign $\pm$ indicated by the subscript.
$V_-$ is the reflection of $V_+$.
If $L$ is a subset of labels on an abstract fat vertex $V$, an $L$-interval
is an $\langle x,y \rangle$ corner where no labels of $L$ are in the
interior of $\langle x,y \rangle$.
A \define{dual orientation} on an $L$-interval is indicated by an arrow
pointing either into (a sink) or out from (a source) the corner.
If all dual orientations on a vertex are the same, the vertex is said to
have \define{uniform dual orientation}, or be a sink or source (depending
on the dual orientation).
A \define{star} $T$ is an ordered triple $(V(T), L(T), \omega (T))$,
where $V(T) = V_\pm$, $L(T)$ is a subset of the labels around
$V(T)$, and $\omega(T)$ is an assignment of dual orientations to each
$L(T)$-interval around $V(T)$. Given a star $T$, $\overline{T}$ is the same
star with reversed dual orientations, and $-T$ is the mirror image of
$\overline{T}$.

 Let $\Delta$ be a nonempty face of $G_P$ (meaning its interior contains
 vertices). The set of labels on the corner of vertex $i$ in $\bdd \Delta$
  will be denoted $X_i(\Delta)$. Furthermore $X_-(\Delta) =
 \bigcup X_i(\Delta)$ where the union is taken over negative vertices in
 $\bdd \Delta$. $X_+(\Delta)$ is defined similarly.

  \begin{figure}[h]
  \centering
  \includegraphics[width=0.8\textwidth]{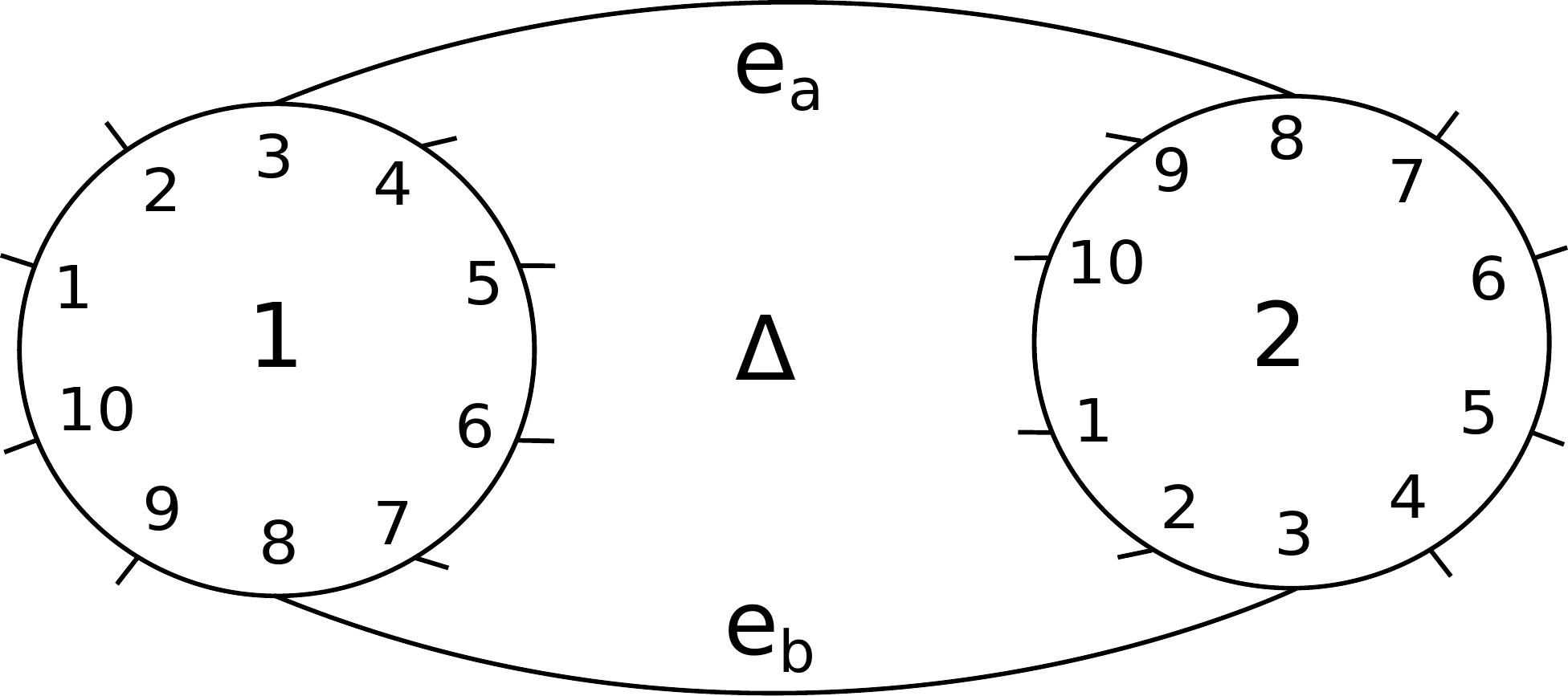}
  \caption[Example of part of $G_P$ for Lemma
          \ref{lem:DeltaCornersHaveDisjointLabels}.]{
           Example of part of $G_P$ for Lemma
           \ref{lem:DeltaCornersHaveDisjointLabels}.
           The edges $e_a$ and $e_b$ come from a Scharlemann cycle of
           order $2$ in $G_Q$ on the vertices $3$ and $8$.}
  \label{fig:DisjointSchDisks}
 \end{figure}
\begin{lem}
 \label{lem:DeltaCornersHaveDisjointLabels}
 Let $E$ be the edges of a Scharlemann cycle in $G_Q$.
 Let $\Delta$ be a disk face of $G_P(E)$ with no edges of $G_P(E)$ in its
 interior. 
 Then considered relative to $G_P$,
 $X_-(\Delta) \cap X_+(\Delta) = \emptyset$.
\end{lem}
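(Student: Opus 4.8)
The plan is to argue by contradiction: from a label lying on corners of both signs I would extract two specific edges of $G_P$ which, together with the Scharlemann cycle, produce a configuration already forbidden by Theorem~\ref{thm:NoNewGreatXCycle} (with Lemma~\ref{lem:ScharlemannOrder} handling the ``same order'' alternative).

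First I would set up the reduction. Let $\Sigma$ be the Scharlemann cycle with edge set $E$, say of order $m$ with vertices $w_1,\dots,w_m$ in $G_Q$. By the labeling convention $\Sigma$ is a $1,2$-Scharlemann cycle, so every edge of $E$, read in $G_P$, joins the special vertices $1$ and $2$; applying the parity rule to such an edge — its two $G_Q$-endpoints lie on parallel vertices of $G_Q$, hence have the same parity — forces vertices $1$ and $2$ of $G_P$ to be antiparallel, and we may assume vertex $1$ is positive. Since the only vertices of $G_P$ meeting an edge of $E$ are $1$ and $2$, the boundary of a disk face $\Delta$ of $G_P(E)$ meets $G_P$ only along corners of vertices $1$ and $2$ — a disk face whose boundary were a single fat vertex $i\geq 3$ would make $P$ a twice-punctured sphere, contradicting $p>2$ — so $X_+(\Delta)=X_1(\Delta)$ and $X_-(\Delta)=X_2(\Delta)$, and it suffices to show no label of $G_P$ lies simultaneously on a corner of vertex $1$ and on a corner of vertex $2$ in $\partial\Delta$.

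Second, suppose a label $z$ of $G_P$ does. Then the point of fat vertex $1$ labeled $z$ lies in the interior of a corner of $\partial\Delta$, so the edge $a$ of $G_P$ meeting vertex $1$ at $z$ is not in $E$ and lies in $\overline{\Delta}$; likewise there is an edge $b\notin E$ lying in $\overline{\Delta}$ and meeting vertex $2$ at label $z$. Because $a$ meets vertex $1$ of $G_P$ at label $z$ and $b$ meets vertex $2$ at label $z$, in $G_Q$ both $a$ and $b$ are incident to the fat vertex $z$, at the $G_Q$-labels $1$ and $2$ respectively; as labels $1$ and $2$ are adjacent on every fat vertex of $G_Q$, they cut off a $\langle 1,2\rangle$ corner $c$ at $z$. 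Moreover $z\notin\{w_1,\dots,w_m\}$, since otherwise $a$ and $b$ would be the two edges of $\Sigma$ at the vertex $w_i=z$, hence in $E$.

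Third I would derive the contradiction by promoting $a\cup c\cup b$ to an additional Scharlemann-type ``rung.'' Concretely, take the Scharlemann disk $f$ of $\Sigma$, glue it along the edges of $E$ it shares with the subdisk of $\Delta$ cut off by $a\cup c\cup b$, and cap the $\langle 1,2\rangle$ corners (those of $f$ together with $c$, all lying in the one-handle of the surgery solid torus between the meridian disks of vertices $1$ and $2$) with meridian disks; tracing the boundary of the resulting surface in $M(\pi)$ and carrying the picture back to $G_Q$ should yield a great $x$-cycle of $G_Q$ that is either new, contradicting Theorem~\ref{thm:NoNewGreatXCycle}, or a Scharlemann cycle of order $\neq m$, contradicting Lemma~\ref{lem:ScharlemannOrder}; alternatively one can argue via Corollary~\ref{cor:NoXEdgesLeavingImpliesXCycle} that the edges of $E\cup\{a,b\}$ inside $\Delta$ already force a new $x$-cycle, which sits in an $(s)$-disk and is therefore great (Corollary~\ref{cor:NewXCyclesHaveSDisks}). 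The step I expect to be the genuine obstacle is making this last construction precise: pinning down how $a$ and $b$ are positioned relative to $\Delta$ and $\Sigma$, showing the cycle (equivalently the surface) produced is genuinely new rather than a re-encoding of $\Sigma$ itself, and dispatching the degenerate cases in which $a$ or $b$ itself joins the two special vertices of $G_P$ and is thus a candidate Scharlemann edge. Throughout, careful bookkeeping with the parity rule and with the fixed knot-order of the labels around each fat vertex (reversed according to sign) is what makes the two corner-label sets come out disjoint.
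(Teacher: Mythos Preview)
Your first two paragraphs set up the bigon $\Delta$ correctly, but the third paragraph---where the contradiction must live---is only a sketch, and you flag it yourself as the ``genuine obstacle.'' That gap is real. The surface you propose would glue a disk in $Q$ (the Scharlemann face $f$) to a piece of $\Delta\subset P$ along edges of $\check P\cap\check Q$ and along a corner $c$ sitting on $\partial\check Q$; it is not at all clear that this yields anything embedded in $M(\pi)$, nor that its boundary is a cycle in $G_Q$, let alone a great one. Your alternate route via Corollary~\ref{cor:NoXEdgesLeavingImpliesXCycle} has no visible traction either: knowing that two particular edges $a,b$ meet vertex $z$ of $G_Q$ at labels $1$ and $2$ does not produce a vertex set $V$ from which no $x$-edge leaves.

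The paper's proof avoids all of this with a two-line combinatorial argument that you are missing. It never looks at the $G_P$-edges $a,b$ incident to the label $z$; it looks only at the two Scharlemann edges $e_a,e_b$ bounding $\Delta$ and at the cyclic order of labels. Write $l_a^i,l_b^i$ for the labels at which $e_a,e_b$ meet vertex $i\in\{1,2\}$. Among the Scharlemann-vertex labels $w_1,\dots,w_m$, the two closest to $z$ in the canonical cyclic order are a fixed pair $\{s_1,s_2\}$, independent of which fat vertex of $G_P$ you read them on. If $z$ lies in the corner of $\Delta$ at vertex $1$, that corner is precisely the $\{w_j\}$-interval containing $z$, so $\{l_a^1,l_b^1\}=\{s_1,s_2\}$; the same at vertex $2$ gives $\{l_a^2,l_b^2\}=\{s_1,s_2\}$. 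Now use that $1$ and $2$ are antiparallel: the labels appear in reversed cyclic order, and tracing $\partial\Delta$ forces $l_a^1=l_a^2$ and $l_b^1=l_b^2$. But then $e_a$ and $e_b$ are loops in $G_Q$, impossible for edges of a Scharlemann cycle of order $m\ge 2$. No surface construction and no appeal to Theorem~\ref{thm:NoNewGreatXCycle} or Lemma~\ref{lem:ScharlemannOrder} is needed.
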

\begin{proof}
 Note first that $\Delta$ is a $2$-corner face with corners on the
 special vertices.
 Let the edges in $\bdd \Delta$ be $e_a$ and $e_b$. The edge $e_a$ meets
 $1$ and $2$ at labels $l_a^1$ and $l_a^2$ respectively, and the edge
 $e_b$ meets $1$ and $2$ at labels $l_b^1$ and $l_b^2$.
 Suppose that $x\in X_-(\Delta) \cap X_+(\Delta)$. Then the labels
 corresponding to $G_Q$ vertices in the Scharlemann cycle which are
 on either side of $x$ on the star $T$, are $\{s_1,s_2\}$.
 But then $\{s_1,s_2\} = \{l_a^1,l_b^1\} = \{l_a^2,l_b^2\}$.
 Since $1$ is a negative vertex and $2$ is positive, the order of the 
 labels is reversed, thus for $x$ to be in $X_-(\Delta) \cap X_+(\Delta)$,
 we must have $l_a^1 = l_a^2$ and $l_b^1 = l_b^2$, implying that
 $e_a$ and $e_b$ are two loops in $G_Q$ rather than a
 Scharlemann cycle.
\end{proof}

 \begin{figure}[h]
  \centering
  \includegraphics[width=0.8\textwidth]{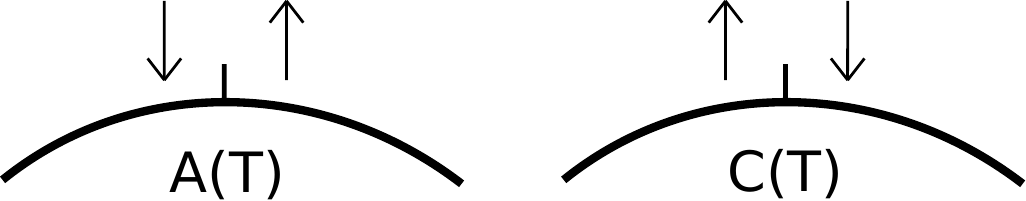}
  \caption{Switch labels.}
  \label{fig:SwitchLabels}
 \end{figure}
We define a label of a fat vertex to be an \define{anticlockwise switch} if
the dual orientation in the clockwise direction from the label is out while
the dual orientation in the counter-clockwise direction from the label is in.
We define a label of a fat vertex to be a \define{clockwise switch} if the
dual orientations are opposites those of an anticlockwise switch.
For a star $T$,
$A(T) = \{l\in L(T) \vert l \text{ is an anticlockwise switch}\}$.
Likewise, $C(T) = \{l\in L(T) \vert l \text{ is a clockwise switch}\}$.
Let $S(T) = A(T)\cup C(T)$.
For $l\in L(T) \setminus S(T)$, let $\phi(l) = +$ if both
adjacent dual orientations are out, and $\phi(l) = -$ if both adjacent
dual orienations are in. Define
$$B_s(T) = \{l\in L(T) \setminus S(T) \vert
            (sign\,T)(parity\,l)(\phi(l)) = s \}$$
for $s\in \{+, -\}$. The labels on $T$ are partitioned into
$A(T)$, $C(T)$, $B_+(T)$, and $B_-(T)$. A star $T$ is \define{coherent} if
all elements of $A(T)$ have the same parity and all elements of
$C(T)$ have the same parity. Note that $A(T) = A(-T)$ and $C(T) = C(-T)$.

An $m$-type is a tuple of signs ($\{+,-\}$) of length $m$. A
\define{trivial} $m$-type is one of uniform sign. Let $L$ be a set of
labels, and $l$ the number of $L$-intervals on a star $T$.
An $L$-type is an $l$-type where each sign corresponds to a
unique $L$-interval.
Let $\mathbb{L}_0$ be a subset of the $L$-intervals, and let $\tau$ be an
$L$-type. Then $\tau \vert \mathbb{L}_0$ is the
$\vert \mathbb{L}_0 \vert$-type obtained by restricting $\tau$ to the
$L$-intervals in $\mathbb{L}_0$.
Let $T$ be a star and $\tau$ be an $L(T)$-type. If it is possible to assign
distinct signs to the dual orientations such that $\omega(T)$ and $\tau$
match (e.g. sink equals $-$ and source equals $+$, or switched),
then $T$ \define{represents} $\tau$, and we write $[T] = \tau$.
We will say that an $L$-type $\tau$ is coherent if there exists a coherent
star such that $[T] = \tau$.

Let $D$ be a disk face of $G_P(L)$ and let $\tau$ be an $L$-type. 
$D$ \define{represents} $\tau$ if there is a star $T$ with $[T] = \tau$
such that with the dual orientation of $G_P(L)$ inherited from $T$,
$D$ is a sink or source (i.e. all corners on $\bdd D$ have matching
dual orientations). $G_P(L)$ represents $\tau$ if it has a disk face
which represents $\tau$.

The \define{positive} and \define{negative (clockwise) derivatives of $T$}
are
$$d^\pm (T) = (V(T), C(T), \omega(d^\pm))$$
where $\omega (d^\pm)$ on a $C(T)$-interval $I$ is defined as follows.
Let $a\in A(T)$ be the unique anticlockwise switch in $I$. Then
$\omega(d^+ T)$ is a source if $char\, a = +$, and a sink if $char\, a = -$.
On the other hand, $\omega(d^- T)$ is a sink if $char\, a = +$, and a
source if $char\, a = -$. By $d$ we mean either $d^+$ or $d^-$.
 \begin{figure}[h]
  \centering
  \includegraphics[width=0.8\textwidth]{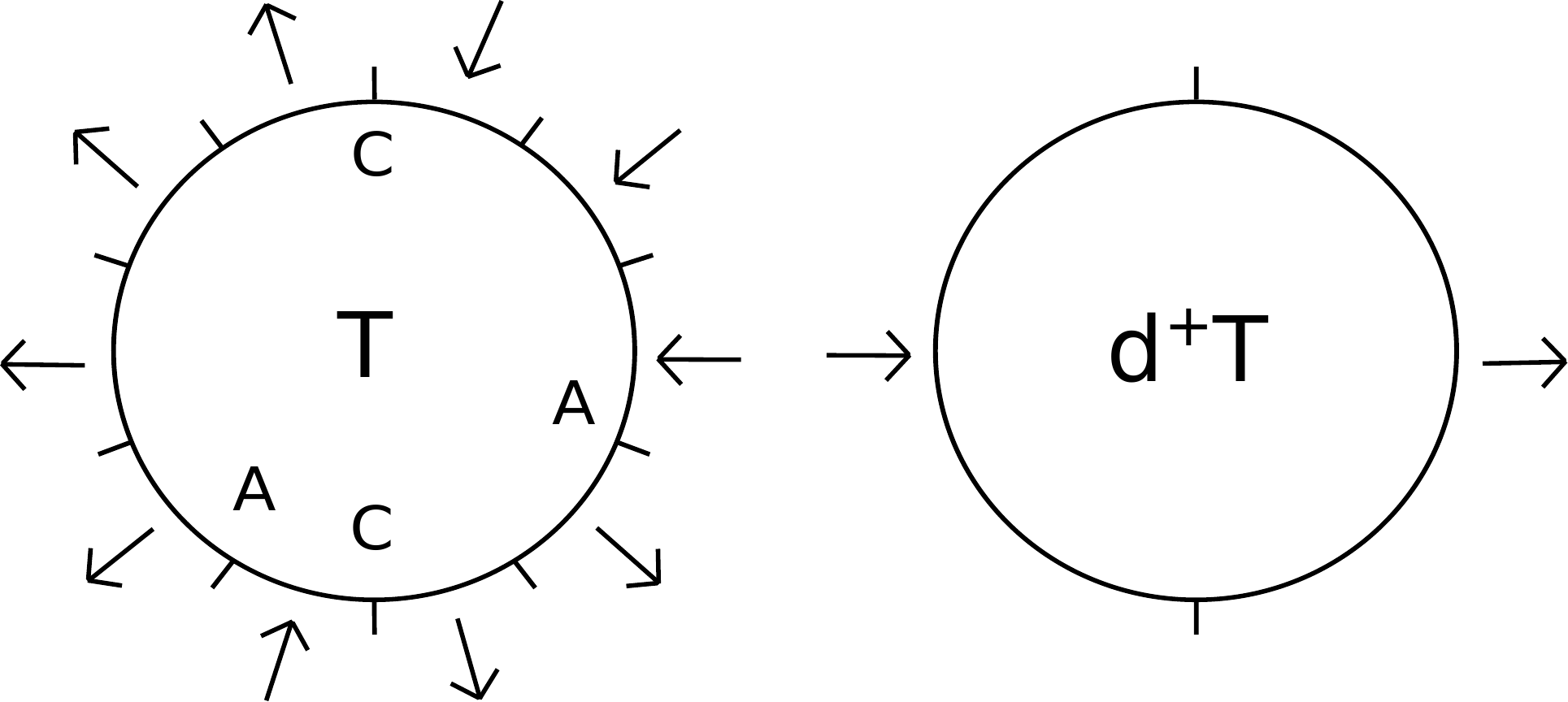}
  \caption{The positive (clockwise) derivative of a star.}
  \label{fig:Derivative}
 \end{figure}

Let $L_0$ be a subset of the labels on $V(T)$.
Then the \define{$(\pm)$-derivative of $T$ relative to $L_0$} is
$$d_{L_0}T = d_0T = (V(T), C(T)\cup L_0, \omega(d_0 T))$$
where $\omega(d_0 T)$ on a $C(T)\cup L_0$-interval $I$ is defined by any
$a\in A(T)$ in $I$ as above, or if none exist, $\omega(d_0 T)$ is defined
to match the orientation of $T$ on $I$.
Define $\widetilde{A}(T) = A(T) - L_0$.
  \begin{prop}[2.1.2 from \cite{GordonLuecke89}]
   \label{prop:GL2.1.2}
   Let $D$ be any composition of $d^+$'s and $d^-$'s, and $D_0$ the
   corresponding composition of $d_0^+$'s and $d_0^-$'s. Then
   $$C(DT)\subset C(D_0T), \, and \, \widetilde{A}(DT) \supset
   \widetilde{A}(D_0T)$$
  \end{prop}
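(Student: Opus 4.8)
The plan is to prove this by induction on the length $n$ of the composition $D$ (equivalently of $D_0$), strengthening the statement so that enough of the dual–orientation data is carried along to make the induction go through. Write $D = d^{\epsilon_n}\circ D'$ and $D_0 = d_0^{\epsilon_n}\circ D_0'$, where $\epsilon_i\in\{+,-\}$, and set $S = D'T$, $S_0 = D_0'T$. Since $L(d^\epsilon R) = C(R)$ while $L(d_0^\epsilon R) = C(R)\cup L_0$, the inductive containment $C(D'T)\subseteq C(D_0'T)$ already forces $L(DT) = C(D'T)\subseteq C(D_0'T)\cup L_0 = L(D_0T)$; thus $DT$ and $D_0T$ live on the same model vertex with the label set of the first refining that of the second, and the substance of the induction is to control how the dual orientations and the switch sets behave under this refinement.

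The engine is a single–step comparison. Given a pair of stars $(S,S_0)$ on a common vertex, with $L(S)\subseteq L(S_0)$ and with the dual orientations matched along $L(S)$ in a suitable sense, one compares $d^\epsilon S$ with $d_0^\epsilon S_0$. The structural fact to invoke — and which must itself be propagated, since it is what makes each derivative well defined — is that for any star the anticlockwise and clockwise switches alternate around $\partial V(\cdot)$, so that every $C(\cdot)$–interval contains exactly one element of $A(\cdot)$. Using this, each $C(S)$–interval $I$ is refined in passing to $S_0$ into finitely many pieces, precisely one of which contains the unique $a\in A(S)$ of $I$; on that piece the definition of $d_0^\epsilon$ reproduces exactly the value that $d^\epsilon$ puts on all of $I$, while on every other piece it defaults to $\omega(S_0)$. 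Feeding the definitions of clockwise and anticlockwise switch through this picture — keeping careful track of whether $a$ is extreme in $I$ (so the derivative value reaches the endpoints of $I$) or interior (so near the endpoints one instead sees $\omega(S_0)=\omega(S)$, which by the anticlockwise–switch condition on $a$ is ``in'' on the counter-clockwise side of $a$ and ``out'' on the clockwise side) — yields $C(d^\epsilon S)\subseteq C(d_0^\epsilon S_0)$ and $\widetilde{A}(d^\epsilon S)\supseteq\widetilde{A}(d_0^\epsilon S_0)$ together with the renewed matching of dual orientations one level up. The containments of the proposition are then the case where this single step is iterated $n$ times, starting from $S_0=S=T$ (where the strengthened hypotheses are trivial and ``$L_0$'' in the single step is literally the given $L_0$).

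On the $C$–side one checks that a clockwise switch of $d^\epsilon S$ is never undone or flipped to anticlockwise type by the refinement, and that any new switch created at a refinement label is harmless for the $C$–containment; on the $\widetilde{A}$–side, an anticlockwise switch of $d_0^\epsilon S_0$ that survives to the coarser star is unchanged in type, and one that does not survive must have occurred at a refinement label, which (by the same label–set bookkeeping as above) lies in $L_0$ and is therefore discarded by $\widetilde{A}$ anyway. The alternation fact and the ``matching'' clause of the inductive hypothesis are exactly what make these checks go through, and re-establishing the matching clause for the next stage is the same inspection carried one level up.

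I expect the main obstacle to be choosing the ``matching of dual orientations'' clause correctly: it has to be weak enough to be true — the naive guess that $\omega(S_0)$ agrees with $\omega(S)$ on both pieces flanking the endpoints of every $C(S)$–interval is \emph{false} as soon as the lone $A(S)$–label of an interval is pushed into the interior by a refinement label — yet strong enough to pin down switch behaviour after one more derivative. Getting this invariant right, and verifying that it (and the $A/C$–alternation, hence the well-definedness of every derivative occurring in $D$ and $D_0$) is preserved, is the delicate part; the remainder is a finite case check on the mutual positions of switches and refinement labels. One should not expect to shortcut this via a duality swapping $A$ and $C$, since the definition of $d_0$ breaks that symmetry by adjoining $L_0$ specifically to the clockwise–switch set, so both containments are best verified in tandem.
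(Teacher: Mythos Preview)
The paper does not supply its own proof of this proposition: it is quoted verbatim as Proposition~2.1.2 of \cite{GordonLuecke89} and used as a black box. So there is no in-paper argument to compare against.

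That said, your inductive plan is the natural one and is in the spirit of how Gordon and Luecke set things up: one step of $d$ versus $d_0$ on a common vertex, then iterate. You have correctly isolated the real content, namely that the na\"ive statement ``$C(DT)\subset C(D_0T)$ and $\widetilde{A}(DT)\supset\widetilde{A}(D_0T)$'' is too weak to self-propagate, and that one must carry along a compatibility clause on the dual orientations of $S=D'T$ and $S_0=D_0'T$ strong enough to compare the next derivatives but weak enough to survive the passage from level $n-1$ to level $n$. Your diagnosis that the obvious ``$\omega(S_0)$ restricts to $\omega(S)$'' guess is too strong, and that the asymmetry in the definition of $d_0$ (which adjoins $L_0$ to $C$ rather than to $A$) blocks a clean $A\leftrightarrow C$ duality, is accurate. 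What remains is exactly what you say: pinning down the right invariant (roughly, that on each $L(S)$-interval the orientations of $S$ and $S_0$ agree on the sub-interval containing the unique $A(S)$-label) and then the finite case-check. This is a genuine plan rather than a finished proof, but nothing in it is wrong, and it is the expected route.
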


A \define{graph with dual orientation} is a pair
$\Gamma = (G(\Gamma), \omega(\Gamma))$ where $G(\Gamma)$ is a subgraph
of a fat vertex graph, and $\omega(\Gamma)$ is an assignment of dual
orientation to each corner of $G(\Gamma)$. Note that we will sometimes
consider $G_i$ to simply have a dual orientation, rather than discussing
$\Gamma$.

Given $G_P$ and $G_Q$, a star $T$ such that $L(T)\subset V_Q$ generates
a dual orientation on $G_P(L(T))$ as follows. Vertices of the same sign as
$V(T)$ are given dual orientations of $\omega(T)$, while vertices of
the opposite sign are given dual orientations of $-\omega(T)$.
If an $L(T)$-interval on $V(T)$ corresponds to a corner in $G_P(L(T))$
which has edges in its interior, each subcorner in $G_P(L(T))$ is given
the dual orientation inherited from the corresponding $L(T)$-interval on
$V(T)$. Note that the definition of derivative(s) of a star extends
naturally to derivative(s) of a graph with dual orientation.
We define $\delta \Gamma$ to be the graph with dual orientation obtained
by taking the derivative $d$ at each fat vertex, and for a
subgraph $G_0$ of $\Gamma$, we define $\delta_0\Gamma = \delta_{G_0}\Gamma$
to be the graph with dual orientation obtained by taking the derivative 
$d_{L(G_0)\vert v}$ relative
to $L(G_0)\vert v$ at each fat vertex, where $L(G_0)\vert v$ is the
set of labels at $v$ which meet edges of $G_0$.

  \begin{lem}[2.2.2 from \cite{GordonLuecke89}]
   \label{lem:GL2.2.2}
   If the exceptional labels of $G(L_0)$ are contained in $C(X)$, then
   $\delta \Gamma (T) = \Gamma(d_0 T)$. (Here $d_0 = d_{L_0}$,
   $\delta_0 = \delta_{G(L_0)}$.)
  \end{lem}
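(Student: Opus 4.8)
The plan is to reduce the statement to a purely local, vertex-by-vertex comparison of the two operations $\delta$ and $d_0$, since both $\delta\Gamma$ and the passage $T\mapsto d_0T$ are defined by acting independently at each fat vertex of $\Gamma$. Fix a fat vertex $v$ of $G(L_0)$, let $L(G_0)\vert v$ be the labels at $v$ meeting edges of $G_0 = G(L_0)$, and let $T_v$ be the star at $v$ obtained by restricting the dual orientation $\omega(\Gamma)$ to $v$. By definition $\delta_0\Gamma$ is obtained by applying $d_{L(G_0)\vert v}$ at each $v$, so it suffices to show that for each $v$ the star $d_{L(G_0)\vert v}T_v$ agrees with the star that $\Gamma(d_0T)$ assigns to $v$ — i.e. that the relative derivative commutes with the generation-of-dual-orientation process when the exceptional labels are controlled.

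First I would unwind the hypothesis. The exceptional labels of $G(L_0)$ are exactly the labels at which edges of $G_0$ leave $G_0$ but which are not themselves in $L_0$; the hypothesis says these all lie in $C(X)$ (the clockwise switches of the ambient star $X = T$). The point of this hypothesis is that the difference between $d_0 = d_{L_0}$ and the ordinary derivative $d$ is precisely that $d_0$ retains the labels of $L_0$ as corners of the derivative whether or not they are clockwise switches; so I need to check that, on each $C(T)\cup L_0$-interval, the dual orientation prescribed by $d_0T$ (via some $a\in A(T)$ in the interval, or matching $T$ if none) is exactly what $\delta\Gamma$ produces when one passes to $G_P(L(d_0T))$. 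The key is that because the exceptional labels sit inside $C(X)$, every corner of $G(L_0)$ that survives into $\delta\Gamma$ corresponds to a $C(T)\cup L_0$-interval of $T$ containing at most one anticlockwise switch relevant to that corner, so the recipe for $\omega(d_0T)$ on that interval and the recipe for $\delta$ on that corner read off the same switch $a$ and the same character $\operatorname{char} a$, hence assign the same source/sink. Where an interval contains no $A(T)$-element, both recipes fall back to "match the orientation of $T$," so they again agree.

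The main obstacle I anticipate is bookkeeping the correspondence between corners/subcorners of the graph $G_P(L(G_0))$ and the $L$-intervals of the abstract star, especially at corners of $G(L_0)$ that contain edges of $G_P$ outside $G_0$ in their interior (the subcorner situation), and making sure that the exceptional-label hypothesis really does rule out the one bad configuration — namely an $L_0$-interval whose two bounding switches have opposite parity, which would make the derivative ambiguous or disagree with $\delta$. Once that correspondence is pinned down, the equality $\delta\Gamma(T) = \Gamma(d_0T)$ follows by comparing the two assignments corner-by-corner; I would also invoke the character/parity relationship between $\Gamma$ and $-\Gamma$ at antiparallel vertices (vertices opposite in sign to $V(T)$ get $-\omega(T)$, and derivatives commute with the $T\mapsto -T$ operation since $A(T) = A(-T)$, $C(T) = C(-T)$) to handle both signs of vertex uniformly. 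No global argument or induction on the graph is needed — the content is entirely that the two local rules coincide under the stated hypothesis.
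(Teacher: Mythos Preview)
The paper does not supply its own proof of this lemma; it is quoted verbatim from \cite{GordonLuecke89} and used as a black box (see its invocation inside the proof of Lemma~\ref{lem:NontrivialInduction}). So there is no in-paper argument to compare against.

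As for your proposal itself: the overall shape is right. Both $\Gamma(T)\mapsto\delta_0\Gamma(T)$ and $T\mapsto d_0T$ are defined vertex-by-vertex, so the lemma is indeed a purely local statement, and your identification of the role of the hypothesis---that exceptional labels of $G(L_0)$, being contained in $C(T)$, cannot create new anticlockwise switches or split a $C(T)\cup L_0$-interval in a way that changes which $a\in A(T)$ determines the derived orientation---is the correct mechanism. One point to tighten: you slide between $\delta$ and $\delta_0$ in your write-up (and the paper's statement is itself ambiguous on this, as the parenthetical defines $\delta_0$ but the displayed equation writes $\delta$; the later application uses $\delta_0$). Make sure you are proving the version actually needed, namely $\delta_0\Gamma(T)=\Gamma(d_0T)$, and that your local computation is for $d_{L(G_0)|v}$ rather than the unrelativized $d$. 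Also, the phrase ``an $L_0$-interval whose two bounding switches have opposite parity, which would make the derivative ambiguous'' is not quite the right worry: the derivative $d_0$ is never ambiguous by definition; what the hypothesis prevents is a mismatch between the label set retained by $\delta_0$ at $v$ (namely $C(T_v)\cup L(G_0)|v$) and the label set $C(T)\cup L_0$ of $d_0T$ restricted to $v$. That these coincide is exactly what ``exceptional labels $\subset C(T)$'' buys you, and once the label sets match, the orientations match for the reason you give.
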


If $\Gamma$ is a graph with dual orientation, we define the
\define{dual graph} $\Gamma^*$ (note that this definition is from
\cite{GordonLuecke89} and is not the standard definition
of the dual of a graph).
For each disk face $F$ of $\Gamma$, $\Gamma^*$
has a dual vertex in $int\, F$. The vertices of $\Gamma^*$ are the dual
vertices \emph{and} the vertices of $\Gamma$ (i.e. the fat vertices, though
they are treated as regular vertices in $\Gamma^*$). For each corner $X$ of
the disk face $F$ of $\Gamma$, place an edge $e$ from the (fat) vertex
adjacent to $X$ to the dual vertex corresponding to $F$, and orient $e$
to match the dual orientation of $X$ in $\Gamma$. When this is done to
each corner of each disk face, the directed graph $\Gamma^*$ is obtained.

  \begin{lem}[2.4.1 from \cite{GordonLuecke89}]
   \label{lem:GL2.4.1}
   If $(\delta_0\Gamma)^*$ has a sink or source at a dual vertex
   corresponding to a face $E$ of $\delta_0\Gamma$, then $\Gamma^*$
   has a sink or source at
   a dual vertex corresponding to a face of $\Gamma$ contained in $E$.
  \end{lem}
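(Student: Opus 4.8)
The plan is to argue locally at each fat vertex of $\partial E$ and then run an innermost argument inside $\overline{E}$. First I would reduce to the case where the dual vertex of $E$ is a \emph{source} of $(\delta_0\Gamma)^*$, so that every corner of $E$ carries an arrow pointing toward its fat vertex: reversing all dual orientations commutes with forming the derivative once one also interchanges $d^+$ and $d^-$ (compare the definitions of $\omega(d^+T)$ and $\omega(d^-T)$ and the operation sending $T$ to $-T$), so the sink case follows by applying the result to $\overline{\Gamma}$.

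Next comes the key structural observation. Writing $L_0 = L(G_0)|_v$ at each fat vertex $v$, we have $C(T)\cup L_0\subseteq L(T)$, so $G(\delta_0\Gamma)$ is obtained from $G(\Gamma)$ by deleting precisely the edges meeting a fat vertex only at labels of $\widetilde{A}(T)=A(T)\setminus L_0$ or of $B_\pm(T)\setminus L_0$. Hence $\overline{E}$ is subdivided by these deleted edges of $\Gamma$ into $\Gamma$-faces $F_1,\dots,F_k$, and the corner of $E$ at a vertex $v\in\partial E$ is a single $C(T_v)\cup L_0$-interval $I$ whose interior labels are exactly the deleted ones; I would invoke Lemma \ref{lem:GL2.2.2} where needed so that $\delta_0\Gamma(T)=\Gamma(d_0T)$ and this picture is exact. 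Reading the $\Gamma$-subcorners of $I$ clockwise, and using coherence so that all anticlockwise switches relevant to $I$ share a parity, they form a block of subcorners pointing toward $v$, then a deleted edge at an anticlockwise switch $a\in A(T_v)\setminus L_0$, then a block pointing into $E$. The rule defining $\omega(d^\pm T)$ makes the derivative orientation of $I$ a sink or source according to $\mathrm{char}\,a$; since $I$ points toward $v$, this pins down $\mathrm{char}\,a$ and therefore identifies which side of the deleted $a$-edge carries the subcorners that point away from $v$ into $E$. In particular every $v\in\partial E$ contributes at least one $\Gamma$-subcorner interior to $E$ that is a source.

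For the global step, view $\Gamma^*$ restricted to $\overline{E}$: its vertices are the fat vertices of $\overline{E}$ together with one dual vertex per $F_i$, and its edges come from the subcorners interior to $E$. Suppose for contradiction that no $F_i$ and no interior fat vertex of $\overline{E}$ is a sink or source of $\Gamma^*$. Starting from any dual vertex, follow an outgoing $\Gamma^*$-edge to a fat vertex, then (that vertex not being a source) an outgoing edge onward, and so on; the walk cannot terminate, so it contains a directed cycle, which bounds a sub-disk $D\subseteq\overline{E}$. Taking $D$ innermost, $D$ contains no sink or source face or vertex of $\Gamma^*$ in its interior, which is impossible by the standard Euler-characteristic (index) count for a graph with dual orientation on a disk whose boundary corners all point the same way. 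The resulting sink or source $F_i$ (or interior vertex) corresponds to a face of $\Gamma$ contained in $E$, as required.

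The step I expect to be the main obstacle is the local analysis: correctly accounting for how the $\widetilde{A}$- and $B_\pm$-edges cut up each corner $I$, pinning down from $\mathrm{char}\,a$ which side of the deleted $a$-edge points into $E$ uniformly over all of $\partial E$ and for both choices of derivative, and handling the $L_0$-labels and any fat vertices lying in the interior of $\overline{E}$ cleanly; once that bookkeeping is in place, the global count is routine.
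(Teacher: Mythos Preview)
The paper does not prove this lemma; it is stated with attribution to \cite{GordonLuecke89} (as Lemma 2.4.1 there) and used as a black box in the proof of Lemma \ref{lem:NontrivialInduction} and implicitly in Corollary \ref{cor:GL2.4.2}. There is therefore no proof in the present paper to compare your attempt against.

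That said, your overall strategy---subdivide $E$ into $\Gamma$-faces by restoring the deleted edges, analyze the $\Gamma$-dual-orientation along $\partial E$ corner by corner, and then run an index/innermost-cycle argument inside $\overline{E}$---is the natural one and is in the spirit of the arguments in \cite{GordonLuecke89}. Two cautions. First, you invoke ``coherence so that all anticlockwise switches relevant to $I$ share a parity,'' but the lemma as stated carries no coherence hypothesis; fortunately you do not actually need it, since each $C(T)\cup L_0$-interval $I$ contains at most one $\widetilde{A}$-label, and the local picture at a single corner is determined by that one label alone. Second, your sketch should also treat the case where $I$ contains no $\widetilde{A}$-label: then by the definition of $\omega(d_0T)$ the $\Gamma$-orientation on $I$ agrees with the $\delta_0\Gamma$-orientation throughout, so all subcorners of $I$ point toward $v$ and no subcorner points into $E$; this does not break the index count, but it does mean your sentence ``every $v\in\partial E$ contributes at least one $\Gamma$-subcorner interior to $E$ that is a source'' is not literally true and should be replaced by the correct index bookkeeping on $\partial E$.
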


\begin{cor}[2.4.2 from \cite{GordonLuecke89}]
 \label{cor:GL2.4.2}
 Let $\tau$ be an $L$-type and $T$ a star with $[T] = \tau$. If
 $G(C(T))$ represents $[dT]$, then $G(L)$ represents $\tau$.
\end{cor}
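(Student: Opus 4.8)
The plan is to run one step of the Gordon--Luecke derivative machinery. Fix once and for all a choice of $d \in \{d^+, d^-\}$ (the other case is symmetric), and write $\Gamma = \Gamma(T)$ for the graph $G_P(L)$ carrying the dual orientation generated by the star $T$. By the definition of ``represents'', the conclusion that $G_P(L)$ represents $\tau$ is precisely the assertion that $\Gamma$ has a disk face which is a sink or source, equivalently that the dual graph $\Gamma^*$ has a sink or source at one of its dual vertices; producing such a dual vertex is the goal.

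First I would invoke Lemma~\ref{lem:GL2.2.2} with $L_0 = \emptyset$. Since $G(\emptyset)$ has no edges it has no exceptional labels, so the containment hypothesis of that lemma is satisfied vacuously, and it gives $\delta\Gamma(T) = \Gamma(d_0 T)$ with $d_0 = d_\emptyset = d$ (the reduction $d_\emptyset = d$ is immediate from the definition of the relative derivative: each $C(T)$-interval contains a unique anticlockwise switch, so the ``if none exist'' clause never applies). Because $L(dT) = C(T)$, the right-hand side is exactly the graph $G(C(T))$ equipped with the dual orientation inherited from the star $dT$ --- that is, $\delta\Gamma(T)$ is literally the object whose sink-or-source disk faces witness ``$G(C(T))$ represents $[dT]$''.

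Now the hypothesis supplies a disk face $E$ of $\delta\Gamma(T)$ that is a sink or source, i.e.\ a sink or source of $(\delta\Gamma(T))^* = (\delta_0\Gamma)^*$ at the dual vertex of $E$, where $\delta_0 = \delta$ corresponds to the empty subgraph $G_0$. Feeding this into Lemma~\ref{lem:GL2.4.1} yields a sink or source of $\Gamma^*$ at a dual vertex corresponding to a face $F$ of $\Gamma$ contained in $E$. That face $F$ is a disk face of $G_P(L)$ which, with the dual orientation coming from $T$, is a sink or source; hence $F$ represents $\tau$, and therefore $G_P(L)$ represents $\tau$. Proposition~\ref{prop:GL2.1.2} plays no role here, since only a single derivative is taken; it is needed only for iterated derivatives.

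The proof is short once the identifications are in place, so the only real obstacle is the bookkeeping: verifying that with $L_0 = \emptyset$ the relative derivative $d_{L_0}$ genuinely collapses to the absolute derivative $d$ and that $\delta = \delta_\emptyset$, and checking at each stage that the combinatorial notion ``represents'' lines up with the sink/source condition on the appropriate dual graph. With those matched up, the corollary is simply Lemma~\ref{lem:GL2.2.2} followed by Lemma~\ref{lem:GL2.4.1}.
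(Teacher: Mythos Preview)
Your proof is correct and is exactly the intended argument: the paper does not write out a proof (the corollary is quoted from \cite{GordonLuecke89}), but its placement immediately after Lemmas~\ref{lem:GL2.2.2} and~\ref{lem:GL2.4.1} makes clear that the derivation is precisely the $L_0=\emptyset$ specialisation you carry out, collapsing $d_0$ to $d$ and $\delta_0$ to $\delta$ and then chaining the two lemmas. Your bookkeeping checks (that $d_\emptyset = d$ because every $C(T)$-interval contains an anticlockwise switch, and that $G(\emptyset)$ has no exceptional labels) are the only content beyond quoting the lemmas, and they are handled correctly.
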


\begin{defn} 
 Let $\tau$ be a nontrivial $L$-type.
 Let $(T_1,\ldots,T_n)$ be a sequence of stars such that
 \begin{enumerate}
 \item $[T_1] = \tau$, $[T_i]$ is nontrivial for $1\leq i \leq n$;
 \item $T_i = d_iT_{i-1}$, where $d_i = d^{\pm}$, for $2\leq i \leq n$;
 \item $[T_n]$ is coherent.
\end{enumerate}   
 Then the sequence $(T_1,\ldots,T_n)$ is a \define{sequence of coherence} for $\tau$.
 \end{defn}
  
 \begin{prop}
 Any nontrivial $L$-type $\tau$ has a sequence of coherence.
 \end{prop}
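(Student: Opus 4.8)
The plan is to build the sequence $(T_1,\dots,T_n)$ greedily: take $T_1$ to represent $\tau$, repeatedly apply a derivative, and stop the moment a coherent star is reached. A star representing $\tau$ exists (place $\tau$'s pattern of sources and sinks on the $L$-intervals of a model vertex), so we may start. Two elementary facts make the process terminate. First, the clockwise and anticlockwise switches of any star strictly alternate around the vertex (a clockwise switch is a source-to-sink transition and an anticlockwise switch a sink-to-source transition, so they must alternate); hence $|A(T)|=|C(T)|$, each $C(T)$-interval contains exactly one anticlockwise switch, and, since $A(T)$ and $C(T)$ are disjoint subsets of $L(T)$, we get $|C(T)|<|L(T)|$ whenever $L(T)\neq\emptyset$. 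Thus every application of $d^\pm$ strictly drops the label count. Second, a star with $|C(T)|=|A(T)|\le 1$ — in particular any star with at most two labels — is automatically coherent. Since the empty type and every $1$-type are trivial, a star representing a nontrivial type has at least two labels, so iterating $d^\pm$ cannot continue forever without first meeting a coherent star.

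So termination is automatic and the only real content is keeping every intermediate type nontrivial, for which I would use the interplay between $d^+$, $d^-$ and orientation reversal. Directly from the definitions, $\omega(d^-T)=-\omega(d^+T)$, i.e. $d^-T=\overline{d^+T}$; also reversing all dual orientations swaps the two switch conditions, so $A(\overline{T})=C(T)$ and $C(\overline{T})=A(T)$, while $[\overline{T}]$ is trivial exactly when $[T]$ is, and represents the same type $\tau$. Next, $[d^\pm T]$ is trivial if and only if $A(T)$ is parity-homogeneous: the sign of a $C(T)$-interval of $d^\pm T$ is $\pm\,\mathrm{char}(a)$ for the unique anticlockwise switch $a$ it contains, and $\mathrm{sign}\,V(T)$ is constant. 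Since coherence of $T$ says precisely that $A(T)$ \emph{and} $C(T)$ are both parity-homogeneous, a non-coherent $T$ always has $A(T)$ or $C(T)$ non-homogeneous; if it is $C(T)$, pass to $\overline{T}$ — legitimate, since $\overline{T}$ is the other derivative $d^\mp T_{i-1}$, or the other representative of $\tau$ when $i=1$, and has the same nontrivial type — to arrange $A(\cdot)$ non-homogeneous. Then the next derivative $d^+(\cdot)$ has nontrivial type.

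Assembling: set $T_1$ to a representative of $\tau$. Given $T_i$, if it is coherent set $n=i$ and stop; its type is nontrivial by the running invariant, so conditions (1) and (3) of the definition hold. If $T_i$ is not coherent, replace it by $\overline{T_i}$ if necessary so that $A(T_i)$ is not parity-homogeneous, and let $T_{i+1}=d^+T_i$; then $[T_{i+1}]$ is nontrivial and $T_{i+1}$ has fewer labels than $T_i$. The label count strictly decreases at each continuation, so the procedure halts at some coherent $T_n$, and the resulting sequence is a sequence of coherence for $\tau$.

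The step I expect to be the main obstacle is exactly the parity-homogeneity bookkeeping in the second paragraph: proving that non-coherence of the current star always leaves some derivative with nontrivial type, so that the ``flip to $\overline{T}$'' move is available precisely when it is needed. Everything else — existence of $T_1$, coherence of small stars, and termination — is routine once the identities $d^-T=\overline{d^+T}$, $A(\overline{T})=C(T)$, $C(\overline{T})=A(T)$, and the triviality criterion for $[d^\pm T]$ are in place.
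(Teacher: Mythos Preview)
Your proof is correct and follows essentially the same approach as the paper: both build the sequence by iterating $d^+$ with an occasional passage to $\overline{T}$ (equivalently, switching to $d^-$), using the identities $d^-T=\overline{d^+T}$, $A(\overline{T})=C(T)$, and the criterion that $[d^\pm T]$ is trivial iff $A(T)$ is parity-homogeneous. The only minor difference is in termination: the paper observes that once $C(T_{m+1})$ has uniform parity, one more $d^+$ yields a star all of whose labels share the same parity (hence automatically coherent), whereas you argue more directly via the strict drop in label count at each step.
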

 \begin{proof}
 
 Let $T_1$ be one of the two stars with sign $V(T_1) = +$ and $[T_1] = \tau$.
 If $\tau$ is coherent, let $n=1$.
 If not, we may assume that not all elements of $A(T_1)$ have the same sign
 (replacing $T_1$ with $\overline{T_1}$ if necessary).
 Let $m$ be the smallest positive integer such that $A((d^+)^mT_1)$ has uniform sign.


 If $C((d^+)^mT_1)$ also has uniform sign, let $n=m+1$, $T_i = d^+ T_{i-1}$,
 $2\leq i \leq n$.
 Otherwise, let $n = m + 2$, with
  \begin{enumerate}
  \item $T_i = d^+ T_{i - 1}$, for $2\leq i \leq m$,
  \item $T_{m + 1} = d^- T_m \, (= \overline{(d^+)^m T_1})$,
  \item $T_{m + 2} = d^+ T_{m + 1}$.
 \end{enumerate}
 
  Clearly all $A(T_n)$ elements have the same sign.
 If $n = 1$ or $n = m + 1$, it is immediate that $[T_n]$ is coherent.
 If $n = m + 2$, then $T_{m + 1}= d^- T_m = \overline{(d^+)^m T_1}$.
 Thus $C(T_{m + 1}) = A((d^+)^m T_1)$ is of uniform sign by definition of $m$.
 Since $n = m + 2$ only when $[(d^+)^m T_1]$ is not coherent, $[T_{m + 1}]$ is
 not coherent.
 But $T_n = T_{m + 2}$ is coherent.
 Note that since $[T_i]$ is coherent only for $i = n$, $[T_i]$ is nontrivial for all $i \leq n$.
 
 The sequence $(T_1,\ldots,T_n)$ is thus a sequence of coherence for $\tau$.
 \end{proof}

\subsection{Index}

\begin{figure}[h]
 \centering
  \subfloat[][A negative edge.]{\includegraphics[width=0.4\textwidth]{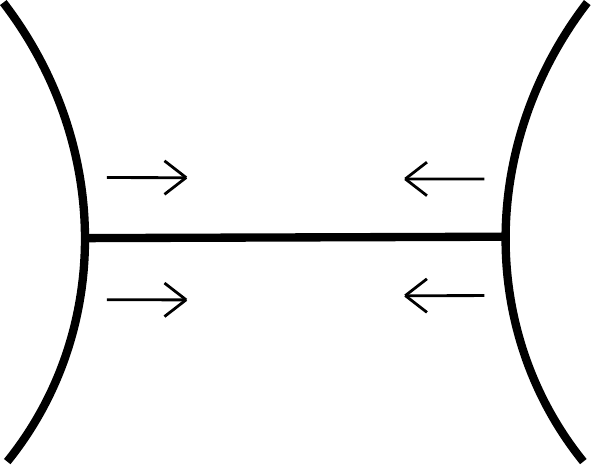}
                 \label{fig:NegativeEdge}}
  \hspace{1cm}
  \subfloat[][A switch edge.]{\includegraphics[width=0.45\textwidth]{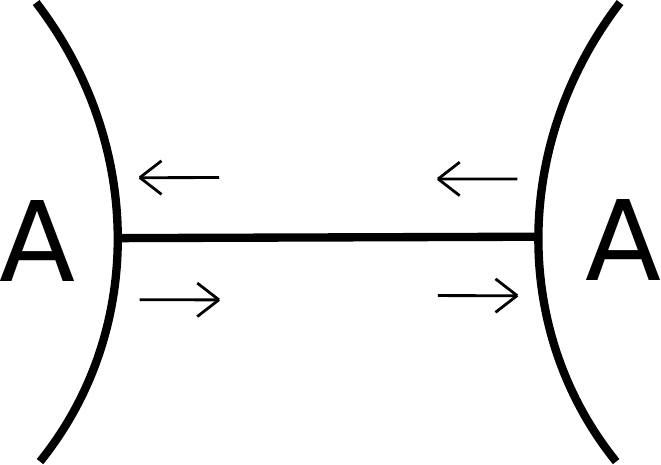}
                 \label{fig:SwitchEdge}}
  \caption[A negative edge and a switch edge.]{}
  \label{fig:EdgeOrientations}
\end{figure}
A \define{negative edge} (Figure \ref{fig:NegativeEdge}) is an edge such
that all four dual orientations
adjacent to the edge are identical (for example, all point out of the
two vertices).
A \define{switch edge} (Figure \ref{fig:SwitchEdge}, sometimes referred
to as a positive edge, particularly in \cite{GordonLuecke89}) is an edge
such that both ends meet vertices at switch labels of the same orientation
(i.e., both anticlockwise switches or both clockwise switches).

 \begin{figure}[h]
 \centering
  \subfloat[][$ind(e) = -1$.]{\includegraphics[width=0.4\textwidth]{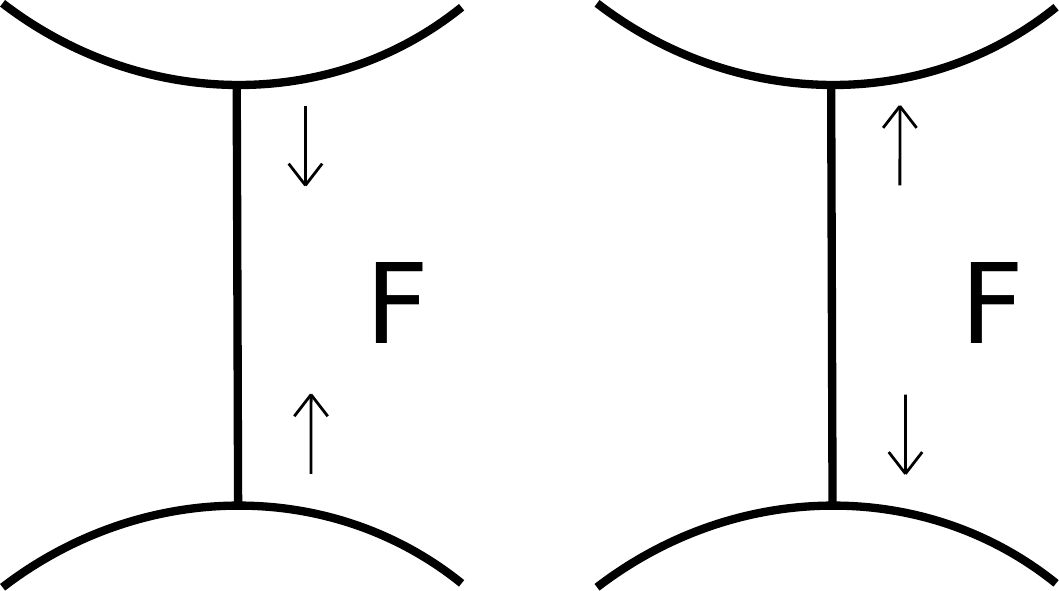}
                 \label{fig:NegativeIndexEdge}}
  \hspace{1cm}
  \subfloat[][$ind(e) = 0$.]{\includegraphics[width=0.4\textwidth]{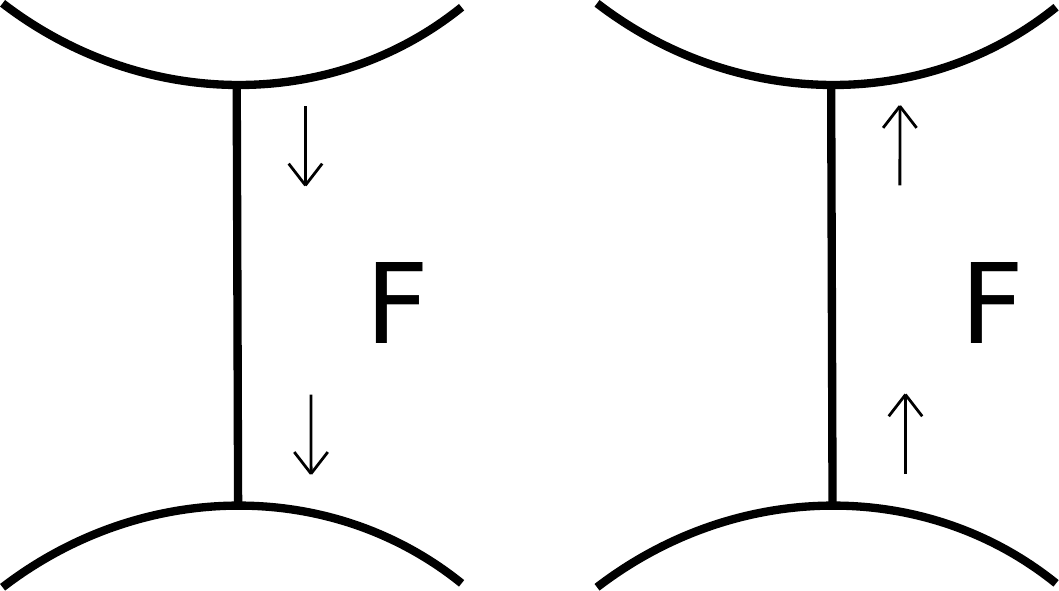}
                 \label{fig:ZeroIndexEdge}}
  \label{fig:EdgeIndex}
  \caption[Edge index calculations.]{}
\end{figure}
A \define{corner} $X$ is $(V(X),I(X),L(X),\omega(X))$ where $V(X)$ is a
model fat vertex, $I(X)$ is an interval on $V(X)$, $L(X)$ a subset of labels
in $I(X)$, and $\omega(X)$ a dual orientation on the $L(X)$-intervals in
$I(X)$. We define the \define{index} of a corner $ind(X) = 1 - s(X)$ where
$s(X)$ is the number of switches in the corner.
For an edge $e\subset \bdd D$, $ind(e) = -1$ if the dual orientations on
the corners of $D$ adjacent to $e$ agree (i.e. are both out or both in,
Figure \ref{fig:NegativeIndexEdge}), while $ind(e) = 0$ if the dual
orientations disagree (Figure \ref{fig:ZeroIndexEdge}).
Finally, for a disk face $D$ of a subgraph of $G_i$, define
$$index\, \bdd D = \sum_{X\text{ a corner of }F} ind(X) +
                  \sum_{e\subset \bdd F} ind(e).$$
Define
\begin{enumerate}
 \item $i = \frac{\vert S(T)\vert}{2} - 1$,
 \item $u$ to be the number of negative edges of $G_P(L)$,
 \item $r$ to be the number of disk faces of $G_P(L)$ representing $T$,
 \item $s$ to be the number of switch edges in $G_P(L)$.
\end{enumerate}

Let $\mathbb{G}$ be a directed graph. For a vertex $v$ of $\mathbb{G}$,
we define $s(v)$ to be the number of times the orientation switches on
edges leaving $v$ (see Figure \ref{fig:VertexSwitches}).
The \define{index} of a vertex $v$ is $I(v) = 1 - \frac{s(v)}{2}$.
The \define{index} of a face is $I(F) = \chi(F) - \frac{s(F)}{2}$,
where $s(F)$ is the number of switches around $F$ (see
Figure \ref{fig:FaceSwitches}).
When $\mathbb{G}$ is actually the dual graph $\Gamma^*$,
define $t$ = $r - \sum I(v)$,
where the sum is taken over all dual vertices of $\Gamma^*$.
\begin{figure}[h]
 \centering
  \subfloat[][$s(v) = 4$, $I(v) = -1$.]{\includegraphics[width=0.4\textwidth]{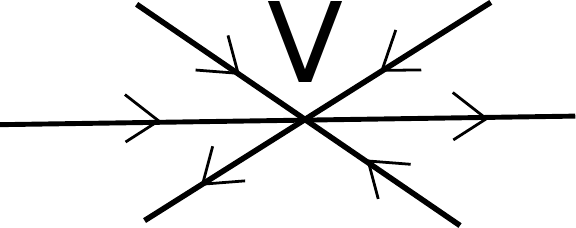}
                 \label{fig:VertexSwitches}}
                 \hspace{1cm}
  \subfloat[][$s(F) = 4$, $I(F) = -1$.]{\includegraphics[width=0.4\textwidth]{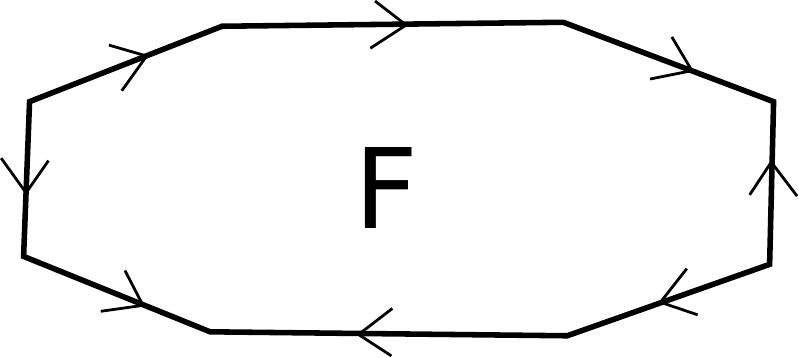}
                 \label{fig:FaceSwitches}}
  \caption{Directed graph index calculations.}
  \label{fig:SwitchCounting}
\end{figure}

Let $L$ be a set of labels with $\vert L \vert \geq 2$ and $\tau$ a nontrivial
$L$-type. Let $T$ be a star with $L(T)=L$ and $[T]=\tau$.
\begin{lem}[\cite{Hoffman95}, 2.3.1]
\label{lem:HoffmanLemma}
Suppose that
\begin{enumerate}[(i)]
 \item all elements of $C(T)$ have the same parity;
 \item all elements of $A(T)$ have the same parity;
 \item $G_P(L)$ does not represent $\tau$.
\end{enumerate}
Then either
\begin{enumerate}[(1)]
\item there exists a new $x$-cycle $\Sigma$ in $G_Q$ such that the
vertices of $\Sigma$ are a subset of either $C(T)$ or $A(T)$; or
\item there is no new $x$-cycle as described in (1), and the following
conditions hold:
      \begin{enumerate}[(a)]
      \item $G_P(L)$ is a connected graph;
      \item $t = u = 0$;
      \item each special vertex of $G_P(L)$ is adjacent to $\vert S(T) \vert$
            switch edges;
      \item each regular vertex of $G_P(L)$ is adjacent to exactly
      $\vert C(T) \vert - 1$ clockwise switch edges and exactly
      $\vert A(T) \vert - 1$ anticlockwise switch edges;
      \item there exists $1,2$-Scharlemann cycles in $G_Q$ on $C(T)$ and $A(T)$;
      \item $n\leq \vert C(T) \vert = \vert A(T) \vert$, where $n$ is the order
      		of the Scharlemann cycle;
      \item $\vert [C(T), L \setminus C(T)] \vert =
             \vert [A(T), L \setminus A(T)] \vert = p - 2$; and
      \item the vertices of $C(T)$ (also $A(T)$) are connected in $G_Q$.
      \end{enumerate}
\end{enumerate}
\end{lem}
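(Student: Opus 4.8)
The plan is to run the Gordon--Luecke index machinery (Lemma \ref{lem:GL2.2.2}, Lemma \ref{lem:GL2.4.1}, Corollary \ref{cor:GL2.4.2}, Proposition \ref{prop:GL2.1.2}) along a sequence of coherence for $\tau$, and extract from the failure of representation a quantitative Euler-characteristic identity for $G_P(L)$ that forces either conclusion (1) or all the sub-items of (2). First I would fix $T$ with $[T]=\tau$ and generate the dual orientation on $\Gamma = G_P(L)$ it induces (vertices of the same sign as $V(T)$ get $\omega(T)$, the others $-\omega(T)$). Since hypothesis (iii) says $G_P(L)$ does not represent $\tau$, $\Gamma^*$ has no sink or source at any dual vertex. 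By Lemma \ref{lem:GL2.4.1} applied inductively, $(\delta_0\Gamma)^*$ then also has no sink or source; using hypotheses (i) and (ii) (so $A(T)$ and $C(T)$ are each of uniform parity, which is exactly what lets one pass to $d_0 T$ via Lemma \ref{lem:GL2.2.2}, checking the exceptional-label condition), one gets $\delta_0\Gamma = \Gamma(d_0T)$, and similarly $G(C(T))$ cannot represent $[dT]$ (Corollary \ref{cor:GL2.4.2}). Iterating along a sequence of coherence $(T_1,\dots,T_n)$ (which exists by the Proposition just proved), one reaches a coherent star $T_n$ whose associated graph $G(C(T_{n-1}))$ still fails to represent $[T_n]$; this is the base case from which the structure is read off.

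Next I would run the index count. Writing $\chi(\redS)=2$ and summing the face/vertex/edge index contributions (the quantities $i, u, r, s, t$ defined just before the statement) over $G_P(L)$ with its dual orientation, one obtains an identity of the shape (schematically) $2 = \sum_{F} index\,\bdd F + (\text{vertex terms})$, which after regrouping becomes an equation relating $t$, $u$, $r$, the number of switch edges at special versus regular vertices, and $\vert S(T)\vert$, $\vert C(T)\vert$, $\vert A(T)\vert$. The non-representation hypothesis forces $r$ to count only faces that are neither sinks nor sources, which pins down the sign of several terms; combined with the parity rule and the uniform-parity hypotheses, all slack in the identity is squeezed out. If at some stage of the iteration an $x$-cycle appears among the vertices of $C(T_j)$ or $A(T_j)$ — which is detected by a face of $G(C(T_j))$ collapsing, i.e.\ a would-be Scharlemann configuration failing to close up as a disk with no interior — that $x$-cycle is a \emph{new} $x$-cycle (it is not a Scharlemann cycle precisely because of how it arose in the derivative), giving conclusion (1). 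Otherwise no such cycle exists, and then the tight index identity yields (2a)--(2h): connectedness of $G_P(L)$ (2a); $t=u=0$ (2b) because any negative edge or excess $t$ would make the index strictly exceed $2$; the switch-edge counts (2c), (2d) by localizing the identity at each vertex; the Scharlemann cycles on $C(T)$ and $A(T)$ in $G_Q$ (2e) since the faces realizing equality are exactly $2$-corner faces on the special vertices whose $G_Q$-edges form such cycles; (2f) from Theorem \ref{thm:GreatWebDivisibility} / Lemma \ref{lem:ScharlemannOrder} applied to those cycles; (2g) the edge-count $\vert[C(T),L\setminus C(T)]\vert = p-2$ from counting edges leaving $C(T)$ against the $p/2$ vertices and the switch data; and (2h) connectedness of $C(T)$ in $G_Q$ from (2a) transported across via the $\mathbb{P}$/$\mathbb{A}$ duality and Lemma \ref{lem:APropertyImpliesCircuitsOrTrees}.

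The main obstacle I anticipate is the bookkeeping in the iteration step: one must verify, at every stage of the sequence of coherence, that the exceptional-label hypothesis of Lemma \ref{lem:GL2.2.2} is met so that $\delta_0\Gamma = \Gamma(d_0T)$ actually holds, and that Proposition \ref{prop:GL2.1.2}'s containments $C(DT)\subset C(D_0T)$, $\widetilde A(DT)\supset\widetilde A(D_0T)$ keep the relative and absolute derivatives aligned; this is where an $x$-cycle can sneak in, and distinguishing ``new $x$-cycle'' (conclusion (1)) from ``Scharlemann cycle'' (which feeds into (2e)) requires care about which corner collapses and on which vertices. The index identity itself is essentially forced once the setup is right, but squeezing \emph{all} of (2a)--(2h) out simultaneously — rather than just a subset — means tracking equality in several inequalities at once, so the delicate part is arguing that non-representation makes every one of them tight rather than just their sum.
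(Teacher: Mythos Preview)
The paper does not give its own proof of this lemma; it is quoted as Lemma 2.3.1 of \cite{Hoffman95} and used as a black box. So there is no in-paper argument to compare your proposal against directly, only Hoffman's original.

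That said, your proposal contains a structural misreading of the statement. Hypotheses (i) and (ii) say precisely that $T$ is \emph{already coherent}: all of $C(T)$ has one parity and all of $A(T)$ has one parity. There is therefore no sequence of coherence to iterate along---the sequence has length $n=1$. Your plan to ``iterate along a sequence of coherence $(T_1,\dots,T_n)$ \ldots\ [until] one reaches a coherent star $T_n$'' imports the recursive structure of Theorem \ref{thm:ScharlemannCycleInNewXCycle} (where one genuinely starts from an incoherent type and drives it down to coherence via derivatives) into a lemma that is exactly the \emph{base case} of that recursion. The machinery you invoke---Lemma \ref{lem:GL2.2.2}, Corollary \ref{cor:GL2.4.2}, Proposition \ref{prop:GL2.1.2}---is for passing between levels of the derivative tower; here there is nothing to pass between.

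Once you drop the spurious iteration, the shape of your second paragraph is roughly right: Hoffman's argument is a single Euler-characteristic index count on $\Gamma^*$ via Lemma \ref{lem:EulerCharLemma}, exploiting coherence directly. Because $A(T)$ and $C(T)$ each have uniform parity, one can count clockwise and anticlockwise switch edges at each vertex separately against $|C(T)|$ and $|A(T)|$; the dichotomy (1)/(2) arises because the only way those counts can fall short at some vertex $x$ is if edges at (say) $C(T)$-labels on $x$ go to other $C(T)$-labels in $G_Q$, which chases to a new $x$-cycle on $C(T)$. If no such cycle exists, the index identity is tight everywhere and yields (2a)--(2h). Two smaller mis-attributions: (2f) does not use Theorem \ref{thm:GreatWebDivisibility} (that concerns great webs); the inequality $n\le |C(T)|$ follows from (2e) since a Scharlemann cycle of order $n$ sits on $n$ distinct vertices of $C(T)$, and $|C(T)|=|A(T)|$ falls out of the index balance. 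Likewise (2h) is read off from the switch-edge structure in $G_Q$, not from Lemma \ref{lem:APropertyImpliesCircuitsOrTrees}.
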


For a directed graph $\mathbb{G}$,
\begin{lem}[\cite{GordonLuecke89}, 2.3.1]
\label{lem:EulerCharLemma}
$\sum_{vertices} I(v) + \sum_{faces} I(F) = 2$.
\end{lem}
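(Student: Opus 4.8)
The plan is to read this as a combinatorial Gauss--Bonnet / Euler-characteristic identity and prove it by a double count of the corners of the embedded graph. Here $\mathbb{G}$ is embedded in the $2$-sphere (as it is in our situation, where $\mathbb{G}=\Gamma^*$ lives in $P$ or $Q$). Write $V$ and $E$ for its numbers of vertices and edges. Expanding the definitions,
\[
\sum_{v} I(v) + \sum_{F} I(F) = \sum_v\Bigl(1-\tfrac{s(v)}{2}\Bigr) + \sum_F\Bigl(\chi(F)-\tfrac{s(F)}{2}\Bigr) = V + \sum_F \chi(F) - \tfrac12\Bigl(\sum_v s(v) + \sum_F s(F)\Bigr).
\]
Additivity of (compactly supported) Euler characteristic over the decomposition of $S^2$ into $\mathbb{G}$ and its complementary faces gives $\sum_F\chi(F) = \chi(S^2) - (V-E) = 2 - V + E$. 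So the lemma is equivalent to the identity $\sum_v s(v) + \sum_F s(F) = 2E$.

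To prove that identity I would argue corner by corner. A \emph{corner} is a pair of edge-ends consecutive in the rotation around a vertex; since this is a purely local notion there are exactly $\sum_v\deg(v)=2E$ corners, and each corner occurs exactly once among the boundary walks of the complementary faces (true even if a face is not a disk, if an edge meets a face on both sides, or if $\mathbb{G}$ is disconnected). Fix a corner $c$ at a vertex $v$, between consecutive edges $e_1,e_2$ and in a face $F$, and say ``$e_i$ points in'' when $e_i$ is oriented toward $v$. Then $c$ contributes to $s(v)$ exactly when $e_1$ and $e_2$ disagree on pointing in. Now traverse $\partial F$ with $F$ on a fixed side: the walk arrives at $v$ along one of $e_1,e_2$ and departs along the other; the arriving step agrees with that edge's orientation iff the edge points in, and the departing step agrees iff the edge points out. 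Hence $c$ contributes to $s(F)$ exactly when $e_1$ and $e_2$ \emph{agree} on pointing in. So each of the $2E$ corners contributes to exactly one of $\sum_v s(v)$, $\sum_F s(F)$, which gives $\sum_v s(v)+\sum_F s(F)=2E$ and therefore the lemma.

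The argument is not deep; the step I would be most careful about is simply the bookkeeping in degenerate configurations --- non-disk faces (annuli or worse), edges incident to the same face twice, isolated vertices, and disconnectedness of $\mathbb{G}$. This is exactly why it pays to phrase the vertex--face exchange locally, so that ``corner $\leftrightarrow$ transition in a face boundary walk'' is an honest bijection with no global hypotheses, and to invoke additivity of compactly supported Euler characteristic in place of the naive $V-E+F=2$. With those two conventions the argument above needs no connectivity or disk assumptions; and if one is content to assume $\mathbb{G}$ connected with all faces disks (the typical case when $\mathbb{G}=\Gamma^*$ here), then $\sum_F\chi(F)=F$ and $V-E+F=2$ make the final step immediate.
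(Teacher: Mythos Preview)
The paper does not give its own proof of this lemma; it is simply quoted from \cite{GordonLuecke89}. Your corner-counting argument is correct and is essentially the standard combinatorial Gauss--Bonnet computation (and is the argument used in \cite{GordonLuecke89} as well): reduce to $\sum_v s(v)+\sum_F s(F)=2E$ via Euler characteristic, then observe that each of the $2E$ corners is a switch for exactly one of its vertex or its face.
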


\begin{lem}[Extension of \cite{GordonLuecke89}, 2.3.3]
\label{lem:GLIndexLemma}
Let $\Gamma$ be a graph with dual orientation, $F$ a disk face of a
subgraph of $\Gamma$ such that $ind_{\Gamma}(\bdd F) \leq 0$.
Then $\Gamma$ has one of the following in $F$:
\begin{enumerate}
\item A switch edge;
\item A dual source or sink face;
\item A fat vertex with uniform dual orientation.
\end{enumerate}
\end{lem}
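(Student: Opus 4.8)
The plan is to localize inside $F$ the index/Euler-characteristic argument that proves Lemma 2.3.3 of \cite{GordonLuecke89}; the ``extension'' amounts to the observation that that argument only ever inspects a neighborhood of the face in question, so no global hypothesis on $\Gamma$ is needed. Assume toward a contradiction that $F$ contains no switch edge, no dual source or sink face, and no fat vertex with uniform dual orientation.

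First I would restrict attention to $\Gamma_F$, the part of $\Gamma$ contained in $\overline F$ with its inherited dual orientation, and form the dual directed graph $\Gamma_F^{*}$ as in the definition preceding Lemma \ref{lem:GL2.4.1}. Viewing $F$ inside a $2$-sphere, $\Gamma_F^{*}$ is a directed graph on $S^2$ with one distinguished dual vertex $v_\infty$ sitting in the complementary region $S^2\setminus F$. Lemma \ref{lem:EulerCharLemma} then gives
$$\sum_{w} I(w) \;+\; \sum_{F'} I(F') \;=\; 2,$$
where $w$ runs over all vertices of $\Gamma_F^{*}$ (the dual vertices, including $v_\infty$, together with the fat vertices of $\Gamma$ meeting $\overline F$) and $F'$ over all faces of $\Gamma_F^{*}$ (one per edge of $\Gamma$ in $\overline F$).

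Next I would split the left-hand side into the terms ``interior to $F$'' and those ``meeting $\bdd F$'', and show each interior term is $\le 0$ under the contradiction hypothesis. A dual vertex of a disk face $D$ of $\Gamma$ inside $F$ has $I = 1 - \tfrac12 s(D)\le 0$, since $s(D)$ is a positive even integer ($D$ is not a source or sink); a fat vertex lying in $int\,F$ has $I = 1 - \tfrac12 s(v)\le 0$, since it is not uniform; and a face of $\Gamma_F^{*}$ dual to an edge $e$ of $\Gamma$ in $int\,F$ is governed by the local type of $e$, and a short case check shows that a \emph{non}-switch edge forces at least two switches around this face, so $I \le 0$ there as well. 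It then remains to bound the sum of the terms meeting $\bdd F$ --- the contribution of $v_\infty$, of the fat vertices on $\bdd F$, and of the dual faces of the edges of $\bdd F$ --- and the switch-counting identity underlying \cite{GordonLuecke89}, 2.3.3 shows this sum is at most $index\,\bdd F$, because each switch of the dual orientation read off around $\bdd F$ gets charged either to a switch in a corner of $\bdd F$ or to an edge of $\bdd F$ with $ind(e) = 0$. Summing, the left side of the Euler relation is $\le index\,\bdd F \le 0 < 2$, a contradiction, so one of the three configurations must occur in $F$.

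The step I expect to cost the most is this boundary accounting: making precise, with all switch conventions consistent, that collapsing $S^2 \setminus F$ to $v_\infty$ and absorbing the fat vertices and edges of $\bdd F$ contributes exactly (at most) $index\,\bdd F$ to the Euler sum, together with the short local classification that every non-switch edge yields a nonpositive dual face. Both are essentially bookkeeping already present in the proof of \cite{GordonLuecke89}, 2.3.3; the only genuinely new point is verifying that nothing in that proof used more than the local picture around $F$, so that it applies to a disk face of a subgraph of an arbitrary graph with dual orientation.
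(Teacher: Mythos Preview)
Your overall strategy---apply the Euler--index formula of Lemma~\ref{lem:EulerCharLemma} to the dual graph restricted to $F$ and force a positive-index vertex or face---is the right one, and it is the same strategy the paper uses. Where you diverge is in how you close up $F$ to a sphere so that Lemma~\ref{lem:EulerCharLemma} applies: you cap $F$ off with a single complementary disk carrying one dual vertex $v_\infty$, and then propose to bound the ``boundary contribution'' (the indices of $v_\infty$, of the fat vertices on $\bdd F$, and of the dual faces coming from edges of $\bdd F$) by $index\,\bdd F$. The paper instead \emph{doubles} $F$ along its boundary, setting $2F = F\cup_{\bdd F} -F$ and working with $2\Gamma^*$ on $2F$. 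Under doubling every interior index contribution appears twice, while the seam along $\bdd F$ contributes exactly $\sum_X ind(X) + \sum_e ind(e) = index\,\bdd F$ by the reflection symmetry; the Euler identity then reads $2\sum I(v) + 2\sum I(f) + index\,\bdd F = 2$, and the conclusion is immediate.

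The practical difference is that the doubling trick eliminates precisely the step you flag as the most expensive. In your one-point compactification you must first decide what dual orientation the outside corners (and hence the edges into $v_\infty$) carry, and then carry out a case-by-case switch count to get your claimed inequality; the ``switch-counting identity underlying \cite{GordonLuecke89}, 2.3.3'' you invoke is in fact the doubling argument, not a separate boundary identity, so you would have to manufacture that bookkeeping yourself. Your approach can likely be pushed through, but the paper's doubling is both shorter and avoids any ad hoc choices at the boundary.
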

\begin{proof}
\emph{This proof is adopted from Gordon and Luecke's proof.}
Let $2F$ be the double of $F$, i.e. $2F = F \cup_{\bdd F} -F$. Let
$2\Gamma^*$ denote the double of $\Gamma^*\subset 2F$. By Lemma
\ref{lem:EulerCharLemma},
$$ 2\sum_{v\in \Gamma^*\cap F} I(v) +
   2\sum_{f \text{ face of }\Gamma^*\cap F} I(f) +
   \sum_{X \text{ a corner of F}}ind(X) +
   \sum_{e\subset \bdd F} ind(e) = 2 $$
Therefore
$$ 2\sum I(v) + 2\sum I(f) + ind_{\Gamma}(\bdd F) = 2. $$
Thus $ \sum I(v) + \sum I(f)  > 0$.
Note that a positive index vertex of $\Gamma^*$ implies the presence
of either a fat vertex sink or source or a dual sink or source face,
while a positive index face of $\Gamma^*$ corresponds to a switch edge
of $\Gamma$.
\end{proof}

 Lemma \ref{lem:CircuitImpliesRepsTrivialType} and Lemma \ref{lem:NoCycles}
 show that the existence of certain types of cycles in $G_P(V)$ imply that
 $G_P(V)$ represents certain $V$-types. Note that Lemma \ref{lem:NoCycles}
 can cover the trivial type, but Lemma \ref{lem:CircuitImpliesRepsTrivialType}
 is provided since in the case of the trivial type, the result can be
 obtained in a slightly more general setting.
 
 Theorems \ref{thm:TrivialTypeSDisks} and \ref{thm:HoffmanTree} show the
 existence of trees in $G_P$ when particular $V$-types are not represented
 by $G_P(V)$.
 
 Lemma \ref{lem:NoCycles} and Theorem \ref{thm:HoffmanTree} are an extension
 of Theorem 2.4.2 from \cite{Hoffman95}.

\begin{lem}
 \label{lem:CircuitImpliesRepsTrivialType}
 Let $V$ be any set of vertices in $G_Q$.
 Suppose $G_P$ has $\mathbb{P}(V_r,V)$.
 If $G_P(V)$ has a cycle on negative (or positive) vertices, then $G_P(V)$
 represents the trivial $V$-type.
\end{lem}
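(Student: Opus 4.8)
The plan is to use the machinery of dual orientations and index to show that a cycle on vertices of uniform sign inside $G_P(V)$ forces a disk face of $G_P(V)$ to be a sink or source with respect to the dual orientation generated by the trivial $V$-type, which is exactly what it means for $G_P(V)$ to represent the trivial $V$-type. First I would let $\tau$ be the trivial $V$-type and let $T$ be a star with $L(T) = V$ and $[T] = \tau$; since $\tau$ is trivial, all $V$-intervals on $T$ carry the same dual orientation, so $T$ is a sink or source and $S(T) = \emptyset$. By hypothesis $G_P$ has $\mathbb{P}(V_r,V)$, i.e.\ $G_Q$ has $\mathbb{A}(V,V_r)$; this ensures that for each regular vertex label $x$ there is an edge from a vertex of $V$ at label $x$ to an antiparallel vertex. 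The dual orientation that $T$ generates on $G_P(V)$ gives vertices of sign $V(T)$ the orientation $\omega(T)$ and vertices of the opposite sign the orientation $-\omega(T)$.

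The key step is to analyze an innermost disk face $F$ of the cycle in $G_P(V)$ — more precisely, a disk face of $G_P(V)$ lying inside the disk bounded by the given cycle on negative (or positive) vertices. Since $S(T) = \emptyset$, every corner of $F$ has index $1 - s(X) = 1$, so $\mathrm{index}\,\bdd F$ is positive unless there are enough $ind(e) = -1$ edges to compensate. I would argue that the edges of the cycle, because all their endpoints lie on parallel vertices, are negative edges for this dual orientation: all four dual orientations adjacent to such an edge agree, since both endpoint vertices get the same orientation ($\omega(T)$ or $-\omega(T)$ depending on the common sign). A negative edge has $ind(e) = -1$. So the index count $\sum_X ind(X) + \sum_e ind(e)$ over a suitable face could a priori be $\leq 0$; this is where Lemma \ref{lem:GLIndexLemma} enters. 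Applying that lemma to $F$ with $\Gamma = G_P(V)$ equipped with the $T$-generated dual orientation gives one of three outcomes inside $F$: a switch edge, a dual source or sink face, or a fat vertex with uniform dual orientation. Since $S(T) = \emptyset$, there are no switch labels, hence no switch edges; and a fat vertex with uniform dual orientation inside $F$ would itself be a sink or source, forcing a representing face. In all cases we obtain a disk face of $G_P(V)$ that is a sink or source, i.e.\ $G_P(V)$ represents $\tau$.

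The main obstacle I anticipate is bookkeeping the index inequality carefully enough to guarantee that Lemma \ref{lem:GLIndexLemma} applies — one must choose the face $F$ so that $ind_\Gamma(\bdd F) \leq 0$, which means exhibiting a disk face inside the cycle whose boundary has enough negative edges (the cycle edges themselves) relative to its corners. This is essentially a discrete Euler-characteristic / Gauss-Bonnet count on the disk bounded by the cycle, exactly parallel to how Gordon and Luecke use index arguments; the presence of the cycle (a closed loop of negative edges) is what makes the total index drop to the point where Lemma \ref{lem:GLIndexLemma} becomes applicable. The remaining care is to rule out the ``switch edge'' and confirm that ``fat vertex with uniform dual orientation'' genuinely yields a representing face, both of which follow immediately from $S(T) = \emptyset$ and the definitions of sink/source and ``represents.''
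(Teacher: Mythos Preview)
Your proposal has a genuine gap. You correctly compute that the disk $F$ bounded by the given cycle on (say) negative vertices has $\mathrm{ind}_\Gamma(\bdd F)=0$, so Lemma~\ref{lem:GLIndexLemma} applies, and you correctly rule out switch edges since $S(T)=\emptyset$. The problem is the third alternative. Because $S(T)=\emptyset$, \emph{every} fat vertex of $G_P(V)$ already has uniform dual orientation: positive vertices are (say) sources and negative vertices are sinks. Hence if $F$ contains any vertex in its interior, the third alternative of Lemma~\ref{lem:GLIndexLemma} is satisfied trivially, and the lemma says nothing about the existence of a sink or source \emph{face}. Your assertion that ``a fat vertex with uniform dual orientation inside $F$ would itself be a sink or source, forcing a representing face'' conflates vertex sinks/sources with face sinks/sources; the former does not imply the latter. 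A positive vertex sitting inside $F$ is a source, but the faces of $G_P(V)$ surrounding it may well have corners on negative vertices as well, and so need not be sinks or sources.

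Notice also that your argument never actually uses the hypothesis $\mathbb{P}(V_r,V)$; you state it and then drop it. This is exactly where the paper's proof does its work. The paper takes an \emph{innermost} cycle $\sigma$ on vertices of uniform sign, observes (via the Scharlemann cycle) that $\sigma$ cannot separate the special vertices, and then uses $\mathbb{P}(V_r,V)$ together with Proposition~\ref{prop:AsManyEdgesAsVertsImpliesCircuit} to show the $\sigma$-disk $D$ contains no vertex antiparallel to $\sigma$: if it did, each such vertex would have an edge at a $V$-label to a parallel vertex, yielding a smaller cycle and contradicting innermostness. Once every vertex in $D$ has the sign of $\sigma$, any face of $G_P(V)$ inside $D$ is automatically a sink or source for the trivial $V$-type, with no index computation needed. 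Your approach can be repaired by inserting precisely this innermost argument, but at that point the appeal to Lemma~\ref{lem:GLIndexLemma} becomes superfluous.
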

\begin{proof}
 Since $G_P$ has $\mathbb{P}(V_r,V)$, every regular vertex in $G_P$ is
 adjacent to a parallel vertex at a label in $V$.
 Let $\sigma$ be an innermost cycle on vertices of uniform sign in
 $G_P(V)$. Without loss of generality, suppose $\sigma$ is negative.
 Since $G_Q$ contains a Scharlemann cycle, $\sigma$ cannot separate the
 special vertices from one another.
 Thus $\sigma$ bounds a unique disk $D\subset \Delta$ for some
 $1,2$-bigon face $\Delta$.
 Suppose $D$ contains a positive vertex.
 Every positive vertex $x$ in $D$ has an edge $e_x$ meeting $x$ at a
 label in
 $V$, such that $e_x$ connects $x$ to another positive vertex. By the parity
 rule, there cannot be an edge from $x$ to another positive vertex $y$
 meeting both vertices at labels in $V$. Thus by Proposition
 \ref{prop:AsManyEdgesAsVertsImpliesCircuit}, there is
 a cycle among the positive vertices in $D$, which contradicts the choice of
 $\sigma$.
 Thus all vertices in $D$ are negative.
 Since $\sigma$ is a negative cycle in $G_P(V)$ that bounds a disk with
 only negative vertices, $G_P(V)$ represents the trivial $V$-type.
\end{proof}

 

 \begin{lem}
 \label{lem:NoCycles}
 Suppose $V\subset V_Q$ has uniform sign and uniform dual orientation type,
 that is, $V$ is entirely contained in $A$, $C$, $B_+$ or $B_-$.
 Let $W\subset V_Q$ be a set of opposite sign from $V$.
 Assume that $G_Q[V,W]$ has $\mathbb{A}(V,V_r)$.
 If $G_P$ has an oriented cycle where each edge has its tail at a label
 in $V$ and its head at a label in $W$, then $G_P$ represents $\tau$.
 \end{lem}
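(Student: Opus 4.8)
The plan is to combine the parity rule with the index machinery of Lemma~\ref{lem:GLIndexLemma}, applied to the disk bounded by the given cycle. Let $T$ be the ambient star, so that $[T]=\tau$ and $V$ is contained in one of $A(T),C(T),B_+(T),B_-(T)$, and put on $G_P$ the dual orientation induced by $T$ (parallel fat vertices carry $\omega(T)$, antiparallel ones carry $-\omega(T)$). First I would pin down signs: if $e$ is an edge of $G_P$ with tail at a label $x\in V$ and head at a label $y\in W$, then $x$ and $y$ have opposite parity, since $V$ and $W$ have opposite sign in $G_Q$; so the parity rule forces the two fat vertices of $G_P$ met by $e$ to be parallel. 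Because the given oriented cycle $\sigma$ is connected, all of its fat vertices share one sign, which I take to be $+$; thus every fat vertex of $\sigma$ carries $\omega(T)$, and $\sigma$ is an oriented cycle of $G_P([V,W])$ on parallel vertices.

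If $\tau$ is the trivial type — in particular when $V\subseteq B_\pm(T)$ — there is nothing more to do: the hypothesis that $G_Q[V,W]$ has $\mathbb{A}(V,V_r)$ is equivalent to $G_P$ having $\mathbb{P}(V_r,V)$, and Lemma~\ref{lem:CircuitImpliesRepsTrivialType} applied to the positive cycle $\sigma$ gives that $G_P(V)$ represents the trivial type. So assume $\tau$ is nontrivial, meaning the labels of $V$ are switch labels of $T$ (all anticlockwise, or all clockwise). Among all oriented cycles of the stated form whose fat vertices are positive, choose $\sigma$ so that the disk $D\subset P$ it bounds is innermost, i.e. contains no other such cycle.

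The technical core is to show $ind_{\Gamma}(\partial D)\le 0$, where $\Gamma$ is the relevant subgraph of $G_P$ with the dual orientation from $T$. Each corner of $\partial D$ runs, along a fat vertex, from the label where an incoming edge of $\sigma$ lands (a label in $W$) to the label where the outgoing edge leaves (a label in $V$); since the $V$-labels are switches of a single kind, the index of such a corner is controlled, and — this is the point where the orientation of $\sigma$ (tails in $V$, heads in $W$) is used — the dual orientation on the two subcorners of $\partial D$ abutting any edge $e$ of $\sigma$ agree, so $ind(e)=-1$ for every such $e$. Summing, $ind_{\Gamma}(\partial D)=\sum_X ind(X)+\sum_e ind(e)\le 0$. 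By Lemma~\ref{lem:GLIndexLemma}, inside $D$ there is a switch edge of $\Gamma$, a dual sink-or-source face, or a fat vertex of uniform dual orientation.

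It remains to eliminate the first and third possibilities. A fat vertex of $G_P$ with uniform dual orientation is a source or sink, and such a vertex forces a new great $x$-cycle in $G_Q$, contradicting Theorem~\ref{thm:NoNewGreatXCycle}. A switch edge interior to $D$, together with the incident corners and the arc of $\sigma$ it cuts off, can be spliced into a strictly smaller oriented cycle of the stated form, contradicting the innermost choice of $\sigma$. (If this splicing does not go through directly, the alternative is to invoke Corollary~\ref{cor:GL2.4.2}: to show $G_P$ represents $\tau$ it suffices that $G_P(C(T))$ represents $[dT]$, and one reruns the argument after passing to the derivative, carrying the cycle $\sigma$ along, until the switch-edge obstruction disappears.) What survives is a dual sink-or-source face $F\subset D$ of $G_P(L(T))$; by definition this is a disk face that is a sink or source for the dual orientation induced by $T$, i.e. $G_P$ represents $\tau$. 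The main obstacle is exactly the index inequality of the previous paragraph — verifying that the orientation convention on $\sigma$ really forces $ind(e)=-1$ throughout — together with the bookkeeping that rules out a switch edge interior to $D$.
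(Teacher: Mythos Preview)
Your approach has the right skeleton—take $\sigma$ innermost, bound $ind_\Gamma(\partial D)$, invoke Lemma~\ref{lem:GLIndexLemma}—but you are missing the step the paper's argument actually rests on: showing that the $\sigma$-set $V_D$ inside $D$ is \emph{empty}. The paper takes $\sigma$ innermost among \emph{all} such oriented cycles (not only the positive ones, and in the $\sigma$-disk missing the special vertices), and uses $\mathbb{P}(V_r,V)$ to rule out vertices of $V_D$ antiparallel to $\sigma$; it then observes that every vertex is joined by a switch edge to an antiparallel vertex, so no vertex parallel to $\sigma$ can sit in $V_D$ either, whence $V_D=\emptyset$. With no interior fat vertices the obstructions in Lemma~\ref{lem:GLIndexLemma} collapse and the sink/source face drops out directly.

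Without first emptying $D$, your eliminations of the other two alternatives do not go through. The claim that a uniform-orientation fat vertex ``forces a new great $x$-cycle in $G_Q$'' is not correct; what is true (and enough) is that for nontrivial $\tau$ every fat vertex literally carries $\pm\omega(T)$ and hence has a switch, so that case is vacuous—but not for the reason you give. Your splicing argument for switch edges fails because a switch edge inside $D$ meets its endpoints at switch labels of $T$, which need not lie in $V$ or $W$, so the spliced cycle is not of the stated $(V\to W)$ form and does not contradict your innermost choice; the fallback via Corollary~\ref{cor:GL2.4.2} is not developed. Finally, the blanket claim $ind(e)=-1$ is not what holds: the paper's Figure~\ref{fig:CalculatingDegree} exhibits both $ind(e_X)=0$ and $ind(e_X)=-1$, and the actual estimate is the paired bound $ind(e_X)+ind(X)\le 0$, which uses only that the tail label of $e_X$ is a switch of one fixed type.
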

 \begin{proof}

 Suppose some component of $G_P[V,W]$ has such an oriented cycle $\sigma$.
 Since the special vertices are connected by edges from the Scharlemann
 cycles in $G_Q$, the special vertices must be in the same $\sigma$-disk.
 Assume $\sigma$ is innermost in the $\sigma$-disk $D$ not containing the
 special vertices, and assume $\sigma$ is positive.
 Let $V_D$ be the $\sigma$-set in $D$, and suppose that
 $V_D$ contains a vertex antiparallel to $\sigma$. Since $G_Q[V,W]$
 has $\mathbb{A}(V,V_r)$, $G_P[V,W]$ has $\mathbb{P}(V_r, V)$. Thus
 for each negative vertex $x\in V_D$, there is an edge $e_x$ which meets
 $x$ at a label in $V$ and meets another negative vertex in $V_D$ at a
 label in $W$. Thus there must be an oriented cycle of such edges,
 contradicting the innermost assumption of $\sigma$.

 So $V_D$ has no negative vertices. Since every vertex is adjacent
 to an antiparallel vertex via a switch edge, $V_D = \emptyset$.
 Let $x_0$ be a vertex on $\bdd D$, and let $e_0$ be the edge leaving
 $x_0$ at a $V$ label. Without loss of generality we assume that the
 dual orientation of the corner in $D$ adjacent to $e$
 is into $x_0$. Let $X_0$ be the corner of $D$ at $x_0$.
\begin{figure}[h]
 \centering
  \subfloat[][$ind(e_x) = 0$, $ind(X)\leq 0$.]{\includegraphics[width=0.4\textwidth]{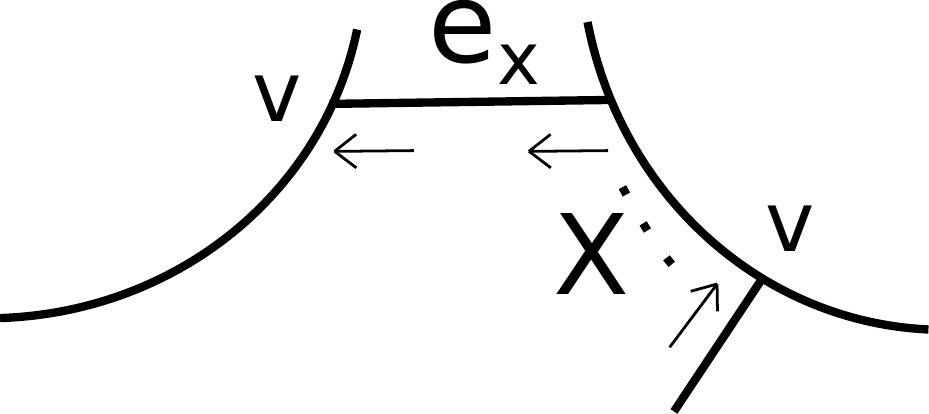}
                 \label{fig:CalculatingDegree1}}
  \hspace{1cm}
  \subfloat[][$ind(e_x) = -1$, $ind(X)\leq 1$.]{\includegraphics[width=0.4\textwidth]{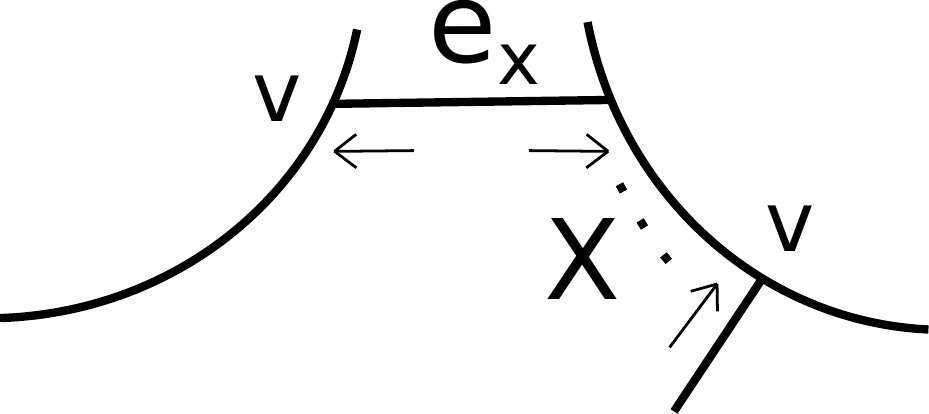}
                 \label{fig:CalculatingDegree2}}
  \caption{Index calculations.}
  \label{fig:CalculatingDegree}
\end{figure}

 Now note that if $\text{ind } e_X = 0$, then $\text{ind } X \leq 0$.
 If $ind(e_X)= -1$, then $ind(X) \leq 1$
 (Figure \ref{fig:CalculatingDegree}).
 Either way, $ind(e_X) + ind(X) \leq 0$. Therefore
 $index(\bdd D)\leq 0$. By Lemma \ref{lem:GLIndexLemma},
 there is a face in $G_P(L)$ in $D$ that represents $\tau$.
 \end{proof}

\begin{thm}
\label{thm:TrivialTypeSDisks}
 Let $V$ be an $(s)$-set contained in an $(s)$-disk. Let $W$ be
 the set of vertices in $G_Q$ of sign $-s$ to which $V$ have edges.
 If $G_P(V)$ does not represent the trivial $V$-type, the following all 
 hold:
 \begin{enumerate}
  \item \label{itm:RegVertsLeave} $V^* = V_r$;
  \item \label{itm:NEdgesLeave} $\vert [V,W] \vert = p - 2$;
  \item \label{itm:SchCycleExistence} $V$ has a Scharlemann cycle;
  \item \label{itm:TreeExistence} $G_P([V,W])$ consists entirely of a
  $(V\to W\to 1)$ tree and a $(V\to W\to 2)$ tree.
 \end{enumerate}

\end{thm}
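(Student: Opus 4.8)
The plan is to assemble Theorem~\ref{thm:TrivialTypeSDisks} from the machinery already developed, treating the four conclusions in a logically dependent order: first extract that every edge leaving $V$ goes to a regular label (hence $V^* = V_r$ and $|[V,W]| = p-2$), then leverage the remaining combinatorics to force a Scharlemann cycle, and finally use the tree dichotomy lemmas to pin down the structure of $G_P([V,W])$.

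\textbf{Step 1 (conclusions \ref{itm:RegVertsLeave} and \ref{itm:NEdgesLeave}).} Since $V$ is an innermost $(s)$-set, Lemma~\ref{lem:SDiskProperties} gives $V^* \supset V_r$ and $G_P$ has $\mathbb{P}(V_r,V)$. I would rule out the possibility $V^* \supsetneq V_r$ — i.e.\ that some edge leaves $V$ at a special label — or rather, rule out that \emph{more} than $p-2$ edges leave. Suppose $|[V,W]| > p-2$. All edges leaving $V$ go to antiparallel vertices (by the $(s)$-set condition), so $E_{antipar}$ for $V$ has size $> p-2$; moreover $G_P$ has $\mathbb{P}(V_r,V)$ and the edges $e_x$ witnessing it sit in $[V,W]$, so the ``furthermore'' hypothesis of Lemma~\ref{lem:AntiparallelEdgesInQGiveCircuitInP} holds. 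That lemma then produces an oriented cycle in $G_P(V)$ on parallel vertices with the tail of each edge at a label in $V$. Apply Lemma~\ref{lem:CircuitImpliesRepsTrivialType} (using $\mathbb{P}(V_r,V)$): $G_P(V)$ represents the trivial $V$-type, contradicting the hypothesis. Hence $|[V,W]| = p-2$ exactly, which combined with $V^* \supset V_r$ forces $V^* = V_r$ (exactly one edge leaves from each regular label and none from the special labels).

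\textbf{Step 2 (conclusion \ref{itm:SchCycleExistence}).} Here I would argue by contradiction using the last sentence of Lemma~\ref{lem:SDiskProperties}: if $V$ has no Scharlemann cycle then $V^* = V_P$, directly contradicting $V^* = V_r$ from Step~1 (since $V_r \subsetneq V_P$ — the special vertices $1,2$ are not regular). So $V$ must contain a Scharlemann cycle.

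\textbf{Step 3 (conclusion \ref{itm:TreeExistence}).} Now I invoke Lemma~\ref{lem:APropertyImpliesCircuitsOrTrees}: any component of $G_P([V,W])$ that is not contained in a $(V\to W\to r)$ tree on parallel vertices rooted at a special vertex must contain a cycle on parallel vertices with tails at $V$-labels. But such a cycle, again via Lemma~\ref{lem:CircuitImpliesRepsTrivialType}, forces $G_P(V)$ to represent the trivial $V$-type — contradiction. Hence every component of $G_P([V,W])$ lies in a $(V\to W\to 1)$ tree or a $(V\to W\to 2)$ tree. To conclude that $G_P([V,W])$ \emph{consists entirely} of exactly these two trees, I count: $|[V,W]| = p-2$ edges and there are $p-2 = |V_r|$ regular labels in $V_P$, each contributing exactly one edge; the parity rule and the tree structure rooted only at $\{1,2\}$ then leave no room for extra components or cycles. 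The main obstacle I anticipate is precisely this last bookkeeping step — verifying that the two trees are edge-disjoint, exhaust all $p-2$ edges, and that no component can be rooted at a regular vertex (which follows because a regular root would be a vertex with no incoming $[V,W]$ edge, but $\mathbb{P}(V_r,V)$ together with the edge count shows every regular label is a non-root internal node of one of the trees). Everything else is a direct assembly of the prior lemmas.
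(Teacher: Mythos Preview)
Your proposal is correct and follows essentially the same route as the paper: Lemma~\ref{lem:SDiskProperties} for $V^*\supset V_r$ and $\mathbb{P}(V_r,V)$, then Lemma~\ref{lem:AntiparallelEdgesInQGiveCircuitInP} combined with Lemma~\ref{lem:CircuitImpliesRepsTrivialType} to force $|[V,W]|=p-2$ and hence $V^*=V_r$, then the last clause of Lemma~\ref{lem:SDiskProperties} for the Scharlemann cycle, and finally Lemma~\ref{lem:APropertyImpliesCircuitsOrTrees} for the trees. The paper's write-up is terser (it simply says the last lemma ``gives the desired trees'' without your Step~3 bookkeeping, and it uses only the basic conclusion of Lemma~\ref{lem:AntiparallelEdgesInQGiveCircuitInP} rather than the ``furthermore'' clause, since Lemma~\ref{lem:CircuitImpliesRepsTrivialType} needs only an unoriented cycle on parallel vertices), but the logical skeleton is identical.
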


\begin{proof}

 By Lemma \ref{lem:SDiskProperties}, $G_P$ has $P(V_r,V)$ and $V^* \supset V_r$,
 implying that $[V,W]\geq p - 2$.
 By Lemma \ref{lem:CircuitImpliesRepsTrivialType}, $G_P(V)$ cannot have a cycle
 on parallel vertices, so by Lemma \ref{lem:AntiparallelEdgesInQGiveCircuitInP},
 $[V,W] = p - 2$. This means that $V^* = V_r$, so by Lemma
 \ref{lem:SDiskProperties} $V$ has a Scharlemann cycle.

 Finally, Lemma \ref{lem:APropertyImpliesCircuitsOrTrees} gives the desired
 trees.
\end{proof}

\begin{thm}[Extension of \cite{Hoffman95} 2.4.2]
\label{thm:HoffmanTree}
Suppose $V\subset V_Q$ has uniform sign and uniform dual orientation type,
that is, $V$ is entirely contained in $A$, $C$, $B_+$ or $B_-$.
Let $W\subset V_Q$ be a set of opposite sign from $V$.
If $G_Q[V,W]$ has $\mathbb{A}(V,V_r)$, then $G_P$ has a negative
$(V\to W\to 1)$ tree and a positive $(V\to W\to 2)$ tree.
\end{thm}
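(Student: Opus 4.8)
The plan is to combine the cycle-detection lemmas (\ref{lem:NoCycles}) with the index machinery (Lemma \ref{lem:HoffmanLemma}) in order to rule out the possibility that $G_P$ represents the $V$-type $\tau$ associated to $V$, and then to extract the two trees from the absence of oriented $[V,W]$-cycles by the connectivity argument already used in Lemma \ref{lem:APropertyImpliesCircuitsOrTrees}.

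First I would fix the star $T$ with $V(T)=+$ such that $L(T)$ is the label set $V$ and $[T]=\tau$ is the uniform dual orientation type guaranteed by the hypothesis (since $V$ lies entirely inside one of $A$, $C$, $B_+$, $B_-$, $\tau$ is well-defined and nontrivial in the appropriate sense). The hypothesis that $G_Q[V,W]$ has $\mathbb{A}(V,V_r)$ translates, via the duality $\mathbb{A}(V_1,V_2)\Leftrightarrow \mathbb{P}(V_2,V_1)$ noted in the excerpt, into $G_P$ having $\mathbb{P}(V_r,V)$ along edges to $W$. The key claim I would then establish is that $G_P$ does \emph{not} represent $\tau$ by a disk face whose corners all lie on $V$-labels; if it did, I would run the index argument of Lemma \ref{lem:HoffmanLemma} (whose hypotheses (i) and (ii) hold precisely because $V$ has uniform dual orientation type, so $C(T)$ and $A(T)$ are each of a single parity — indeed one of them is empty) to derive either a new $x$-cycle on a subset of $C(T)$ or $A(T)$, contradicting Theorem \ref{thm:NoNewGreatXCycle} after passing to an $(s)$-disk via Corollary \ref{cor:NewXCyclesHaveSDisks}, or else the rigid structural conclusions in alternative (2) of Lemma \ref{lem:HoffmanLemma}, which force a Scharlemann cycle configuration incompatible with $V$ being of the prescribed single dual-orientation type. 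Granting that $G_P$ does not represent $\tau$, Lemma \ref{lem:NoCycles} (whose hypotheses are exactly our hypotheses) tells us that $G_P$ has no oriented cycle with every edge's tail at a $V$-label and head at a $W$-label.

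With that no-cycle conclusion in hand, the tree extraction is the same argument as in Lemma \ref{lem:APropertyImpliesCircuitsOrTrees}: the special vertices $1$ and $2$ are joined by edges of the Scharlemann cycle in $G_Q$, and since $G_P$ has $\mathbb{P}(V_r,V)$ along edges to $W$, every regular vertex $x$ has an edge $e_x$ with tail at a $V$-label on $x$ and head at a $W$-label on a vertex parallel to $x$. If from some negative regular vertex there were no directed path of such edges reaching $1$, then — because every such vertex emits one of these edges — one would obtain a directed cycle of $[V,W]$-edges on negative vertices, contradicting the no-cycle statement. Hence every negative regular vertex reaches $1$, giving a negative $(V\to W\to 1)$ tree; symmetrically every positive regular vertex reaches $2$, giving a positive $(V\to W\to 2)$ tree. (Parity forces the tree reaching $1$, a negative vertex, to consist of negative vertices, and likewise for $2$.)

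The main obstacle I anticipate is the middle step: carefully checking that the alternative (2) conclusions of Lemma \ref{lem:HoffmanLemma} really are incompatible with $V$ having uniform dual orientation type, and that the appeal to Lemma \ref{lem:NoCycles} is legitimate even when $\tau$ is the trivial type (in which case one instead invokes Lemma \ref{lem:CircuitImpliesRepsTrivialType} together with the fact, from Lemma \ref{lem:SDiskProperties} applied through an $(s)$-disk, that $G_P$ has $\mathbb{P}(V_r,V)$). One must also be slightly careful that ``$W$'' here is the full target set of sign $-s$, not just the vertices receiving edges, so that $[V,W]$ captures every relevant edge; but since edges from $V$ to parallel vertices are irrelevant to the tree and edges to antiparallel vertices all land in $W$ by definition, this is a bookkeeping matter rather than a genuine difficulty.
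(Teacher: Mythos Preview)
Your final step --- the tree extraction via the cycle-or-tree dichotomy, exactly as in Lemma~\ref{lem:APropertyImpliesCircuitsOrTrees} --- is correct and is in fact the entire content of the paper's proof. The paper's one-sentence argument simply observes that any component of $G_P([V,W])$ not lying in a $(V\to W\to r)$ tree rooted at a special vertex must contain an oriented cycle with tails at $V$-labels and heads at $W$-labels, and then Lemma~\ref{lem:NoCycles} supplies the contradiction.

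Your ``middle step,'' however, contains a genuine error. You propose to prove that $G_P$ does not represent $\tau$ by assuming it \emph{does} and then invoking Lemma~\ref{lem:HoffmanLemma}. But hypothesis~(iii) of that lemma is precisely that $G_P(L)$ does \emph{not} represent $\tau$; you cannot apply it under the opposite assumption, so your contradiction argument never gets started. Nor does alternative~(2) of that lemma help, since it too is a consequence of non-representation, not a route to disproving representation. In fact the paper makes no attempt to establish non-representation here at all. Comparing with the parallel Theorem~\ref{thm:TrivialTypeSDisks} (which carries the explicit hypothesis ``if $G_P(V)$ does not represent the trivial $V$-type''), and noting that Lemma~\ref{lem:NoCycles} only concludes ``$G_P$ represents $\tau$'' rather than an outright absurdity, one sees that Theorem~\ref{thm:HoffmanTree} is meant to be read with the standing hypothesis that $G_P$ does not represent $\tau$; this is the ambient assumption of the surrounding discussion and is satisfied at the one place (Lemma~\ref{lem:nontrivInductionDisks}) where the theorem is invoked. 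So the step you flagged as the main obstacle is not something to be proved but a hypothesis to be assumed --- and once it is assumed, your last paragraph is already the whole proof.
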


\begin{proof}
Any component of $G([V,W])$ that is not in a $(V\to W\to r)$ tree with $r$ a
special vertex must have an oriented cycle such that each edge has its
tail at a $V$ label and its head at a $W$ label.
\end{proof}

\section{Gordon-Luecke Proof Deconstruction}

In this section, the proof from \cite{GordonLuecke89} is deconstructed.
\emph{This is not a new proof.} It is merely the proof of Gordon
and Luecke rewritten so as to more easily extract lemmas which will be
used later.

\subsection{Good, Bad, and Ugly Corners}
\label{ssec:GLSetup}
  
  A corner $X$ is \define{ugly} if there exist $A(X)$ elements with
  differing parities. Since $A(X) = A(-X)$, $X$ is ugly if and only
  if $-X$ is ugly.
  If $X$ is not ugly, define $char\, A(X) = char(a,V(X))$ for any
  $a\in A(X)$.  
  Choose a clockwise character $\eta_c = \pm$ and an anticlockwise character
  $\eta_a = \pm$.
  Two-color the faces of $G_P$. Choose
  the B/W coloring so that a pair $(l,v)$ of character $\eta_c$ is WB
  (going counterclockwise). We refer to $(l,v)$ pairs as WB or BW.
  For $C_1,C_2\in \{B,W\}$, a corner $X$ is $C_1C_2$ if the leftmost label
  of $X\, (\in \bdd I(X))$ is $C_2C_1$, and the rightmost label is $C_1C_2$.
  For example, in Figure \ref{fig:GoodAtoms}, the first corner is $BB$,
  the second $WW$, and the third and fourth $BW$.
  
  \begin{figure}[h]
 \centering
 \includegraphics[width=0.8\textwidth]{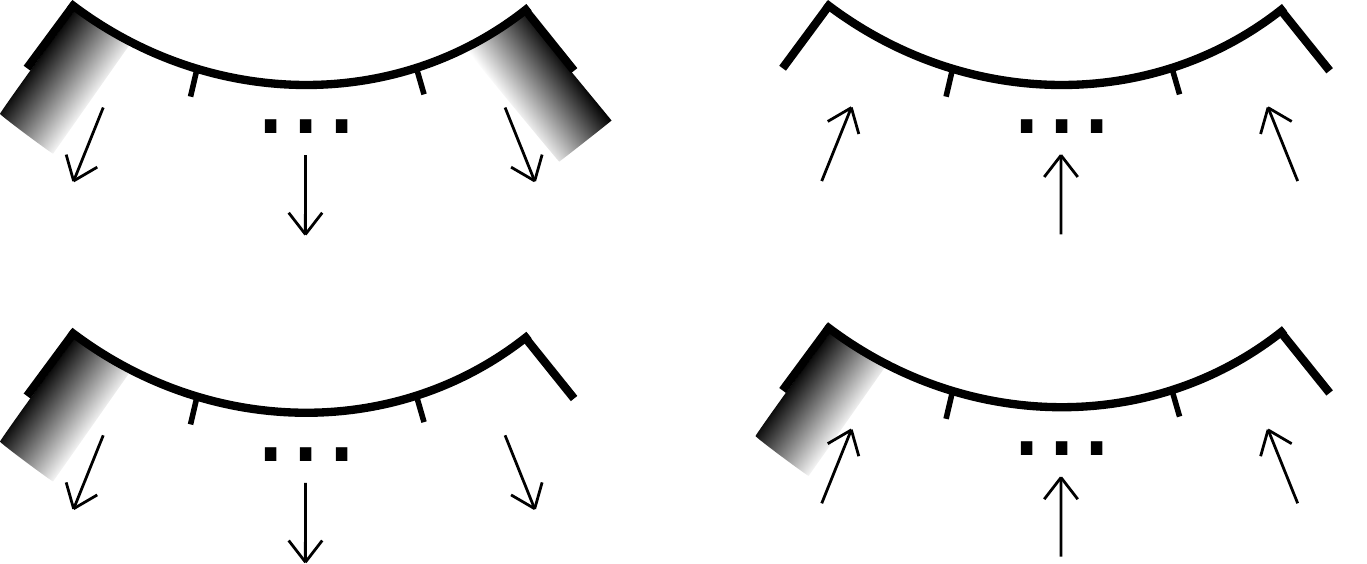}
 \caption{Good atoms.}
 \label{fig:GoodAtoms}
\end{figure}
  An \define{atom} is a corner with no switch labels.
  The atoms in Figure \ref{fig:GoodAtoms}
  are \define{good}, and all others are \define{bad}. Note that
  \begin{enumerate}
   \item an atom $X$ is good if and only if $-X$ is bad, and
   \item atoms on either side of a clockwise switch are both good
   if $char(l,v) = \eta_c$; otherwise, both atoms are bad.
  \end{enumerate}
  If $char\, A(X) = \eta_a$, then $X$ is \define{good} if and
  only if all maximal atoms in $X$ are good. If $char\, A(X) = -\eta_a$,
  $X$ is \define{good} if and only if \emph{some} maximal atom in $X$
  is good.
  $X$ is \define{bad} if it is neither good nor ugly.
  A clockwise (anticlockwise) switch is \define{double-sided} if it has
  character $\eta_c$ ($\eta_a$) and \define{single-sided} if it has
  character $-\eta_c$ ($-\eta_a$).

\begin{figure}[h]
 \centering
 \includegraphics[width=0.5\textwidth]{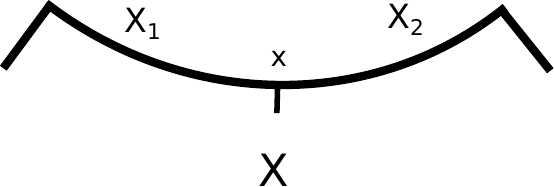}
 \caption{Corner for Lemma \ref{lem:GL2.6.1}.}
 \label{fig:GL2-6-1}
\end{figure}
    
  \begin{lem}[2.6.1 from \cite{GordonLuecke89}]
  \label{lem:GL2.6.1}
  Let $x\in S(X)$, and $X$, $X_1$, and $X_2$ be as Figure \ref{fig:GL2-6-1}.
  Assume $X$ is not ugly.
  \begin{enumerate}[(i)]
  \item If $x$ is double-sided then $X$ is good if and only if $X_1$
  and $X_2$ are good.
  \item If $x$ is single-sided then $X$ is good if and only if $X_1$
  or $X_2$ is good.
  \end{enumerate}
  \end{lem}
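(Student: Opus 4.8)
The plan is to unwind the definition of \emph{good} corner directly; the statement then reduces to a short finite case check governed by two binary choices: whether $x$ is a clockwise or an anticlockwise switch, and whether $\operatorname{char} A(X) = \eta_a$ or $-\eta_a$, since the latter is what selects the quantifier in the definition of goodness. Before splitting into cases I would record three elementary facts. First, since $x$ is a switch it lies in the interior of no atom, so it separates the maximal atom $a_1$ of $X_1$ abutting $x$ from the maximal atom $a_2$ of $X_2$ abutting $x$; consequently the maximal atoms of $X$ are precisely the maximal atoms of $X_1$ together with those of $X_2$ (a disjoint union), and $a_1$ (resp.\ $a_2$) is at once a maximal atom of $X$ and of $X_1$ (resp.\ $X_2$). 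Second, since $A(X_i) \subseteq A(X)$ and all of $A(X)$ has one parity, $X_1$ and $X_2$ inherit non-ugliness, so $\operatorname{char} A(X_i)$ is defined, and because $V(X_1) = V(X_2) = V(X)$ it equals $\operatorname{char} A(X)$; hence the \emph{same} quantifier --- ``every maximal atom is good'' or ``some maximal atom is good'' --- governs goodness of $X$, $X_1$, and $X_2$ at once. Third, when $x$ is a clockwise switch the flanking atoms $a_1, a_2$ are both good if $x$ has character $\eta_c$ and both bad if $x$ has character $-\eta_c$, by the rule recorded just after Figure~\ref{fig:GoodAtoms}.

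If $x$ is a clockwise switch I would split on $\operatorname{char} A(X)$. When $\operatorname{char} A(X) = \eta_a$, goodness of a corner is the ``every maximal atom'' condition, which by the first fact splits as ``$X_1$ good and $X_2$ good''; if $x$ is double-sided this is exactly conclusion (i), while if $x$ is single-sided the flanking atoms $a_1, a_2$ are bad, so each of $X$, $X_1$, $X_2$ contains a bad maximal atom and hence is bad (by non-ugliness, a corner that is not good is bad), and conclusion (ii) holds vacuously. When $\operatorname{char} A(X) = -\eta_a$, goodness is the ``some maximal atom'' condition, splitting as ``$X_1$ good or $X_2$ good''; if $x$ is single-sided this is conclusion (ii), while if $x$ is double-sided then $a_1$ is a good maximal atom of both $X$ and $X_1$ and $a_2$ a good maximal atom of $X_2$, so $X$, $X_1$, $X_2$ are all good and conclusion (i) holds trivially.

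If $x$ is an anticlockwise switch, then $x$ itself lies in $A(X) \cap A(X_1) \cap A(X_2)$, so non-ugliness forces $\operatorname{char} A(X)$ to equal the character of $x$; thus $x$ is double-sided exactly when $\operatorname{char} A(X) = \eta_a$ and single-sided exactly when $\operatorname{char} A(X) = -\eta_a$. In the double-sided case goodness is the ``every maximal atom'' condition and splits as ``$X_1$ good and $X_2$ good'', giving (i); in the single-sided case it is the ``some maximal atom'' condition and splits as ``$X_1$ good or $X_2$ good'', giving (ii). (Alternatively, one can deduce the single-sided statements from the double-sided ones by running the argument on $-X$, using that $X \mapsto -X$ preserves the clockwise/anticlockwise type of each switch, negates its character, and interchanges good and bad for non-ugly corners.)

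The step I expect to require the most care is the clockwise case: one must keep straight that the quantifier is pinned down by $\operatorname{char} A(X)$ while the good-or-bad status of the flanking atoms $a_1, a_2$ is pinned down by the character of $x$, and verify that these two data together are exactly what produces ``good iff $X_1$ and $X_2$ good'' for double-sided $x$ and ``good iff $X_1$ or $X_2$ good'' for single-sided $x$, rather than the reverse. A minor point to dispose of is the degenerate possibility that $X_1$ or $X_2$ collapses to the single switch $x$, which the configuration in Figure~\ref{fig:GL2-6-1} presumably rules out, and which in any case makes one of the conclusions about that side vacuous.
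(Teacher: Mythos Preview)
The paper does not supply its own proof of this lemma; it is quoted verbatim from \cite{GordonLuecke89} without argument. Your direct unwinding of the definition of ``good'' is correct and is essentially the intended proof: the key points are exactly the three you isolate---that a switch separates the maximal atoms of $X$ into those of $X_1$ and those of $X_2$, that $\operatorname{char}A(X_i)=\operatorname{char}A(X)$ so the same quantifier governs all three corners, and that the paper's observation (2) after Figure~\ref{fig:GoodAtoms} pins down the goodness of the two flanking atoms when $x$ is clockwise. One small quibble: in the anticlockwise case you assert $x\in A(X_1)\cap A(X_2)$, which depends on exactly how the figure places $x$ relative to $L(X_1)$ and $L(X_2)$; but the argument does not actually need this, since $x\in A(X)$ alone already forces $\operatorname{char}A(X)$ to equal the character of $x$, and $A(X_i)\subseteq A(X)$ gives the rest. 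The degenerate possibility $A(X_i)=\emptyset$ that your fact 2 glosses over is harmless once one checks that the two quantified readings of ``good'' then coincide (or, in the clockwise subcases where they might not, the flanking atom already decides the matter).
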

  
  \begin{lem}[2.6.2 from \cite{GordonLuecke89}]
  \label{lem:GL2.6.2}
  If $char \, A(X) = -\eta_a$ and there exists $c\in C(X)$ with
  $char \, c = \eta_c$ then $X$ is good.
  \end{lem}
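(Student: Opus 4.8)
The plan is to read the conclusion off directly from the definition of a good corner in the case at hand, together with the recorded behaviour of the atoms flanking a clockwise switch. First note that the hypothesis that $char\, A(X)$ is defined and equal to $-\eta_a$ already presupposes that $X$ is not ugly and that $A(X)\neq\emptyset$; in particular the good/bad dichotomy applies to $X$, and by definition, since $char\, A(X) = -\eta_a$, the corner $X$ is good if and only if \emph{some} maximal atom of $X$ is good. So it suffices to exhibit a single good maximal atom of $X$.

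Now take the clockwise switch $c\in C(X)$ supplied by the hypothesis, with $char\, c = \eta_c$. The switches of $X$ partition $I(X)$ into maximal atoms, so $c$ lies between two maximal atoms of $X$ (or, if $c$ sits at an end of $I(X)$, has a maximal atom of $X$ on one side). By the second of the two recorded properties of atoms — the atoms on either side of a clockwise switch are both good precisely when that switch has character $\eta_c$ — each maximal atom of $X$ adjacent to $c$ is good. Hence $X$ possesses a good maximal atom, and therefore $X$ is good.

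The only point needing care is the degenerate case in which a maximal atom adjacent to $c$ carries no labels, i.e.\ $c$ abuts another switch or an end of $I(X)$. Here one either appeals to Lemma \ref{lem:GL2.6.1}(i): since $char\, c = \eta_c$ the switch $c$ is double-sided, so $X$ is good iff the two corners obtained by splitting $X$ at $c$ are good, which is then checked on the resulting genuine atoms; or one simply observes that the black/white colouring assigns a good/bad type to an empty atom exactly as the atom-adjacency property records, so the argument above goes through verbatim. I expect this degenerate bookkeeping to be the only content beyond unwinding the definitions; the substantive step is essentially immediate once the correct branch of the definition of ``good'' is invoked.
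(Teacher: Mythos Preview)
The paper does not give its own proof of this lemma; it is quoted directly from \cite{GordonLuecke89} without argument. Your proof is correct and is precisely the intended one-line verification: since $char\,A(X)=-\eta_a$, the definition of ``good'' requires only a single good maximal atom, and property (2) of atoms (recorded just above the lemma) says the maximal atoms flanking the clockwise switch $c$ are good because $char\,c=\eta_c$. Your worry about degenerate atoms is unnecessary: the good/bad classification of an atom depends only on its $C_1C_2$ type and dual orientation, both of which are read off from the endpoints of its interval, so property (2) applies uniformly whether or not the atom contains interior labels.
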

  
  \begin{lem}[2.7.1 from \cite{GordonLuecke89}]
  \label{lem:GL2.7.1}
  Let $F$ be a disk face of a subgraph of $\Gamma$ such that each corner
  of $F$ with respect to $\Gamma$ is good. Then $ind_\Gamma \bdd F \leq 0$.
  \end{lem}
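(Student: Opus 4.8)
The plan is to compute $\operatorname{index}\partial F = \sum_{X} \operatorname{ind}(X) + \sum_{e\subset\partial F}\operatorname{ind}(e)$ corner by corner, using the combinatorial description of ``good'' corners set up in Section~\ref{ssec:GLSetup}, and to show the total is at most $0$. First I would recall that $\operatorname{ind}(X) = 1 - s(X)$, where $s(X)$ is the number of switch labels in the corner $X$, and that each edge $e$ on $\partial F$ contributes $-1$ if the two dual orientations on the corners of $F$ adjacent to $e$ agree and $0$ otherwise. Since $F$ is a disk face, the corners and edges of $\partial F$ alternate around a circle, so there are equally many of each; a counting argument that pairs each corner with an adjacent edge is the natural bookkeeping device. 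The key local claim is that around any good corner $X$ the ``surplus'' coming from $\operatorname{ind}(X)$ (which is $\leq 1$, and equals $1$ only when $X$ is an atom) is absorbed by the edge-index contributions on the two edges bounding $X$, because goodness of $X$ forces the dual orientations at the ends of $X$ to be arranged so that at least one adjacent edge has index $-1$.

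The main work is the case analysis on the structure of a good corner. If $\operatorname{char}A(X) = \eta_a$, then $X$ good means every maximal atom in $X$ is good; if $\operatorname{char}A(X) = -\eta_a$, then $X$ good means some maximal atom is good. I would induct on the number of switches $s(X)$ in the corner, using Lemma~\ref{lem:GL2.6.1}: splitting $X$ at a switch $x$ into $X_1$ and $X_2$, a double-sided switch forces both $X_1,X_2$ good (sub-additive in a way that loses the $+1$ the split would otherwise introduce), while a single-sided switch forces at least one of them good. In the base case $s(X)=0$, $X$ is a good atom, one of the four types in Figure~\ref{fig:GoodAtoms}, and one checks directly that the dual orientations at its two endpoints are ``compatible'' with the adjacent edges in the sense that, summed over $\partial F$, the atom's $\operatorname{ind}(X)=1$ is cancelled. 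The two-coloring (B/W) of the faces of $G_P$ and the good-atom classification into $BB$, $WW$, $BW$ types is exactly the device that makes this cancellation uniform: a good atom of a given color type dictates the colors, hence the dual orientations, on the edges flanking it.

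Assembling the pieces: walk around $\partial F$, and for each corner use its goodness (via the split-at-a-switch induction down to good atoms) to charge its index contribution against the index of one of its flanking edges, showing the running sum of (corner index) + (flanking edge indices) is $\leq 0$ over each local block, with no block able to run a positive surplus. Summing over all corners and edges of $\partial F$ gives $\operatorname{index}\partial F \leq 0$. The hardest part is getting the bookkeeping airtight at the switches: a single switch is shared between two corners (or two maximal atoms within one corner), so one must be careful not to double-count the $-1$ of an edge, and the distinction double-sided vs.\ single-sided (Lemma~\ref{lem:GL2.6.1}) and Lemma~\ref{lem:GL2.6.2} (which lets a clockwise switch of character $\eta_c$ rescue an otherwise problematic corner with $\operatorname{char}A(X)=-\eta_a$) are precisely what is needed to keep the charging scheme consistent. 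This is Gordon--Luecke's argument, so I would follow their corner-by-corner accounting rather than attempt a slicker global count.
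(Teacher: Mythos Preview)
The paper does not prove this lemma; it is stated with attribution ``2.7.1 from \cite{GordonLuecke89}'' and used as a black box, so there is no in-paper proof to compare against. Your plan is a plausible reconstruction of the Gordon--Luecke argument and correctly identifies the ingredients: the atom decomposition of a corner, the four good-atom patterns, the $B/W$ two-coloring as the device that makes the edge/corner cancellation uniform, and Lemmas~\ref{lem:GL2.6.1}--\ref{lem:GL2.6.2} for handling switches.

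Two points to tighten. First, your sentence ``a single switch is shared between two corners'' is not right in this framework: switches are labels \emph{interior} to a corner $X$ (elements of $S(X)\subset L(X)$), while corners of $F$ are separated by \emph{edges}. The double-counting hazard is on edges, not on switches, and your charging scheme should assign each edge's $-1$ to at most one adjacent corner. Second, the induction on $s(X)$ via Lemma~\ref{lem:GL2.6.1} is not the cleanest route to the inequality, because that lemma tracks goodness under splitting, not index. The Gordon--Luecke proof is more direct: decompose each corner into its $s(X)+1$ maximal atoms, use the good-atom classification (the $BB$, $WW$, $BW$ patterns of Figure~\ref{fig:GoodAtoms}) to read off the dual orientations at the two ends of each atom, and then observe that as one walks around $\partial F$ the $B/W$ colors alternate across each edge, which forces enough edges of index $-1$ to absorb the $+1$'s from atoms. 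The role of the $\eta_a$/$\eta_c$ choices and the two goodness regimes ($\operatorname{char}A(X)=\eta_a$ versus $-\eta_a$) is precisely to make this edge-by-edge accounting close up; Lemma~\ref{lem:GL2.6.2} is not actually needed inside the proof of 2.7.1 itself. If you want a self-contained write-up, follow \cite{GordonLuecke89} Section~2.7 directly rather than routing through the splitting lemma.
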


 \begin{defn}
 Let $\tau$ be a nontrivial $L$-type, and let $(T_1,\ldots,T_n)$ be a
 sequence of coherence
 for $\tau$.
 Let $L_0\subset L$, and let $(R_1,\ldots,R_n)$ be the sequence obtained by
 letting $R_1 = T_1$, $R_i = d_0^{\pm}R_{i-1}$, where the sign of $d_0^\pm$
 is chosen to match that of the corresponding $d^{\pm}$.
 Then $(R_1,\ldots ,R_n)$ is a \define{$L_0$-sequence} for $\tau$.
 \end{defn}
 
 Note that by Proposition \ref{prop:GL2.1.2}, $C(T_n)\subset C(R_n)$ and
 $\widetilde{A}(T_n) \supset \widetilde{A}(R_n)$.
 Let $I$ be an $L_0$-interval at $V_+$.
 
 \begin{prop}
 The corner $R_n\vert I$ is not ugly.
 \end{prop}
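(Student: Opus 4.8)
The plan is to show that $R_n|I$, being an $L_0$-interval (hence containing no labels of $L_0$ in its interior by definition), can fail to be ugly only in a degenerate way that the hypotheses on the sequence of coherence rule out. Recall that a corner $X$ is ugly precisely when $A(X)$ contains elements of differing parity. So I need to show that $A(R_n|I)$ is coherent — all its anticlockwise switches have the same parity. First I would unwind the definition: $R_n$ is the last term of an $L_0$-sequence $(R_1,\dots,R_n)$ built alongside a sequence of coherence $(T_1,\dots,T_n)$ for $\tau$, with the derivatives $d_0^\pm$ chosen to mirror the $d^\pm$ producing the $T_i$. The crucial given is that $[T_n]$ is coherent, so $A(T_n)$ has uniform parity.

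The key step is to transfer coherence from $T_n$ to $R_n|I$ using Proposition~\ref{prop:GL2.1.2}, which tells us $\widetilde{A}(T_n) \supset \widetilde{A}(R_n)$ (and $C(T_n)\subset C(R_n)$). Here $\widetilde{A}(R_n) = A(R_n) - L_0$, and since $I$ is an $L_0$-interval its interior meets no label of $L_0$, the anticlockwise switches of $R_n$ lying in $I$ are exactly the elements of $A(R_n)\cap I$, none of which is in $L_0$ — so $A(R_n|I) = A(R_n)\cap I \subset \widetilde{A}(R_n) \subset \widetilde{A}(T_n) \subset A(T_n)$. Thus every anticlockwise switch of the corner $R_n|I$ is an anticlockwise switch of $T_n$, and since $[T_n]$ is coherent all of these share a common parity. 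Hence $A(R_n|I)$ cannot contain labels of differing parity, i.e. $R_n|I$ is not ugly.

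The one point requiring care — and the place I expect the main technical friction — is justifying that the anticlockwise switches of the restricted corner $R_n|I$ genuinely coincide with $A(R_n) \cap I$, i.e. that restricting a star to an $L_0$-interval does not create a spurious switch at the endpoints of $I$ nor destroy one in its interior. A switch label is determined by the two \emph{adjacent} dual orientations, and along the interior of $I$ the relevant intervals of $R_n$ are exactly its $C(R_n)\cup L_0$-intervals (by construction of the $L_0$-derivative), so the local dual-orientation data at an interior switch of $R_n$ inside $I$ is unchanged by the restriction. At the endpoints of $I$ we are comparing against labels of $L_0$, but those are endpoints of the corner $I$ itself and hence not interior switch candidates of $R_n|I$. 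So no switch is gained or lost, and the containment $A(R_n|I) \subseteq A(R_n)$ is legitimate. Combined with Proposition~\ref{prop:GL2.1.2} and coherence of $[T_n]$, this completes the argument.
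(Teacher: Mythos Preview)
Your proof is correct and follows the same route as the paper: you use Proposition~\ref{prop:GL2.1.2} to get $\widetilde{A}(R_n)\subset \widetilde{A}(T_n)\subset A(T_n)$, observe that the anticlockwise switches of $R_n$ in the interior of the $L_0$-interval $I$ lie in $\widetilde{A}(R_n)$, and conclude uniform parity from the coherence of $T_n$. The paper's own proof is the same argument compressed to two sentences; your additional paragraph justifying $A(R_n\vert I)\subset \widetilde{A}(R_n)$ is a welcome elaboration of a point the paper leaves implicit.
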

 \begin{proof}
  $\widetilde{A}(R_n) \subset \widetilde{A}(T_n) \subset A(T_n)$.
  Since $T_n$ is coherent, $\widetilde{A}(R_n)$ is of uniform sign.
 \end{proof}
 
 Let $\eta_c = \text{char}(C(T_n),V(T_n))$ and
 $\eta_a = -\text{char}(\widetilde{A}(R_n),V(R_n))$.
 If $\widetilde{A}(R_n)$, $\eta_a$ can be chosen arbitrarily.
 Recall that $V(T_n) = V(R_n) = V_+$.
 For each $L_0$-interval $I$, define $\epsilon(I) = \pm$ by requiring
 $\epsilon(I)(R_n\vert I)$ to be good.
 Define the $L_0$-type $\tau_0$ by $\tau_0 = (\epsilon(I) : I$ an $L_0$-interval$)$.
 We call $\tau_0$ an \define{inherited $L_0$-type} of $\tau$.
 Note that we have not shown, nor do we use, that $\tau_0$ is
 independent of the coherence sequence chosen.

\subsection{Proof of Theorem \ref{thm:SchCycleExistence}}

For the duration of this section, let $D$ be a disk in $Q$ that is
either the complement of a small open disk disjoint from $G_Q$, or a
disk bounded by a new $x$-cycle $\Sigma$ in $G_Q$.
Let $L$ be the set of vertices of $G_Q$ in int $D$.
Note that $\vert L \vert \geq 2$, as there can be no new great $x$-cycles,
nor isolated vertices, in $G_Q$.

\begin{lem}
 \label{lem:NontrivialBaseCase}
 Let $\tau$ be a nontrivial $L$-type with sequence of coherence
 $(T_1,\ldots,T_n)$. If $G_Q\cap D$ contains no $x$-cycle on vertices of
 $A(T_n)$ or $C(T_n)$, then $G_P(L)$ represents $\tau$.
\end{lem}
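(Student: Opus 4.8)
The plan is to peel the sequence of coherence apart one derivative at a time, so that the general case follows from the coherent case, which is handled by Hoffman's Lemma. Since $d^{\pm}T = (V(T),C(T),\omega(d^{\pm}))$, writing $L(T_1) = L$ and $L(T_j) = C(T_{j-1})$ for $2 \le j \le n$ we get the nested chain $L \supseteq L(T_2) \supseteq \cdots \supseteq L(T_n)$, and $(T_j,T_{j+1},\ldots,T_n)$ is itself a sequence of coherence for the nontrivial $L(T_j)$-type $[T_j]$. I would prove, by downward induction on $j$, that $G_P(L(T_j))$ represents $[T_j]$; the lemma is then the case $j = 1$.

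For the base case $j = n$: here $[T_n]$ is coherent and nontrivial, so in particular $|L(T_n)| \ge 2$ (a nontrivial type has at least two intervals), and we may take the star representing $[T_n]$ to be coherent, which gives hypotheses (i) and (ii) of Lemma \ref{lem:HoffmanLemma} — its $A$- and $C$-label sets being $A(T_n)$ and $C(T_n)$, possibly with the two roles swapped, which is harmless since the hypothesis of the present lemma treats $A(T_n)$ and $C(T_n)$ symmetrically. Suppose for contradiction that $G_P(L(T_n))$ does not represent $[T_n]$, so hypothesis (iii) holds too. Then conclusion (1) of Lemma \ref{lem:HoffmanLemma} gives a new $x$-cycle of $G_Q$ whose vertex set lies inside $C(T_n)$ or inside $A(T_n)$, or conclusion (2) holds and item (e) gives $1,2$-Scharlemann cycles — which are $x$-cycles — on $C(T_n)$ and on $A(T_n)$. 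In every case we obtain an $x$-cycle of $G_Q$ all of whose fat vertices lie in $C(T_n)\cup A(T_n) \subseteq L(T_n) \subseteq \cdots \subseteq L$, i.e.\ in int $D$. It then remains to observe that such a cycle is contained in $G_Q \cap D$: its edges are edges of $G_Q$ joining fat vertices of int $D$, hence disjoint from $\partial D$, which is either disjoint from $G_Q$ or a new $x$-cycle whose fat vertices lie on $\partial D$ (and so is disjoint from those edges). This contradicts the hypothesis, so $G_P(L(T_n))$ represents $[T_n]$.

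For the inductive step, suppose $1 \le j < n$ and $G_P(L(T_{j+1})) = G_P(C(T_j))$ represents $[T_{j+1}] = [d\,T_j]$, where $d = d_{j+1} \in \{d^+,d^-\}$. Corollary \ref{cor:GL2.4.2}, applied to the $L(T_j)$-type $[T_j]$ and the star $T_j$, then yields that $G_P(L(T_j))$ represents $[T_j]$. Running this from $j = n$ down to $j = 1$ gives that $G_P(L)$ represents $[T_1] = \tau$, as desired. The step I expect to be the main obstacle is the base-case bookkeeping: confirming that the cycle produced by Lemma \ref{lem:HoffmanLemma} really is an $x$-cycle ``on vertices of $A(T_n)$ or $C(T_n)$'' \emph{and} lies in $G_Q\cap D$ — this is precisely why it matters both that Hoffman's Lemma yields a cycle on the $A$- or $C$-set of the star it is applied to (namely $T_n$) and that $A(T_n),C(T_n)\subseteq L$. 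Everything after that is the routine ``one derivative at a time'' descent driven by Corollary \ref{cor:GL2.4.2}.
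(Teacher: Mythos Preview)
Your proposal is correct and follows essentially the same route as the paper: first use Lemma~\ref{lem:HoffmanLemma} at the coherent endpoint $T_n$ to force an $x$-cycle on $A(T_n)$ or $C(T_n)$ (contradicting the hypothesis) and hence conclude $G_P(L(T_n))$ represents $[T_n]$, then descend via Corollary~\ref{cor:GL2.4.2}. The paper compresses your downward induction into a single citation of Corollary~\ref{cor:GL2.4.2}, and your extra bookkeeping (that $A(T_n),C(T_n)\subseteq L$, and that the resulting cycle lies in $G_Q\cap D$) is useful detail the paper leaves implicit.
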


\begin{proof}

 If $G(L_n)$ does not represent $[T_n]$, then by Lemma
 \ref{lem:HoffmanLemma}, $G_Q$ contains
 an $x_0$-cycle $\Sigma_0$ with vertices either in $A(T_n)$ or $C(T_n)$,
 contradicting the hypothesis.
 Thus $G_P(L_n)$ represents $[T_n]$, so $G_P(L)$ represents $\tau$
 by Corollary \ref{cor:GL2.4.2}.
 
\end{proof}

 \begin{lem}
 \label{lem:TrivialBaseCase}
 Suppose that $G_Q\cap D$ contains no $x$-cycle.
 Then $G_P(L)$ represents the trivial $L$-type.
\end{lem}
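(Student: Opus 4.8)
The plan is to reduce the statement to the existence of a single favorable face and then force that face by an index count. Take the star $T$ with $V(T)=V_+$ and $\omega(T)$ a source; then $[T]$ is the trivial $L$-type, $S(T)=A(T)=C(T)=\varnothing$, and the dual orientation $T$ induces on $\Gamma:=G_P(L)$ makes every positive fat vertex a source and every negative fat vertex a sink, each subcorner inheriting the same. With this dual orientation a disk face of $G_P(L)$ is a sink or a source exactly when every fat vertex on its boundary has the same sign. So the Lemma is equivalent to the assertion that $G_P(L)$ has a ``monochromatic'' disk face, and I would argue this by contradiction, assuming none exists.

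Next I would run the Euler-characteristic count of Lemma \ref{lem:EulerCharLemma} on the dual graph $\Gamma^{*}$ (reducing if necessary to the case $G_P(L)$ connected, so that its faces are disks and the faces of $\Gamma^{*}$ correspond to the edges of $\Gamma$). Every fat vertex of $\Gamma$ is uniform, hence has index $1$; every dual vertex corresponds to a disk face of $\Gamma$, which by assumption is not monochromatic and so has index $\le 0$; and because no fat vertex carries a switch label, $\Gamma$ has no switch edge, so the $\Gamma^{*}$-face dual to an edge $e$ of $\Gamma$ has index $-1$ if $e$ is a negative edge (joins like-sign fat vertices) and $0$ otherwise. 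Substituting these values into $\sum_{\text{vertices}}I(v)+\sum_{\text{faces}}I(F)=2$ forces the number $m$ of negative edges of $G_P(L)$ to satisfy $m\le p-2$. By the parity rule these negative edges are precisely the edges of $G_Q$ between antiparallel vertices that have at least one endpoint in $\operatorname{int}D$.

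Finally I would contradict this using the no-$x$-cycle hypothesis. When $D=S^{2}\setminus(\text{small disk})$ we have $L=V_Q$ and $G_Q$ has no $x$-cycle at all, so Corollary \ref{cor:NoXEdgesLeavingImpliesXCycle} applied to the set of positive vertices of $G_Q$ yields, for each of the $p$ labels $x_0\in V_P$, an edge with tail at $x_0$ on a positive vertex and head at a negative vertex --- an antiparallel edge meeting its positive endpoint at $x_0$; since an edge meets its positive endpoint at a single label, these $p$ edges are pairwise distinct, so $m\ge p>p-2$, a contradiction. The step I expect to be the main obstacle is the remaining case, where $D$ is bounded by a new $x$-cycle $\Sigma$: then $L$ omits the vertices of $\Sigma$, so an exit edge furnished by Corollary \ref{cor:NoXEdgesLeavingImpliesXCycle} may run to a vertex of $\Sigma$ rather than to an antiparallel vertex, and one must treat the special label of $\Sigma$ separately (applying the Corollary to the other $p-1$ labels) and use that $\Sigma$ is not a great $x$-cycle (Theorem \ref{thm:NoNewGreatXCycle}) together with the fact that edges out of $\operatorname{int}D$ stay trapped in $D$, to extract at least $p-1$ distinct antiparallel edges touching $\operatorname{int}D$ --- again more than $p-2$. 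Should this bookkeeping prove too brittle, the fallback is Gordon--Luecke's route: put on $G_P(L)$ a dual orientation all of whose corners are good (Section \ref{ssec:GLSetup}), apply Lemma \ref{lem:GL2.7.1} and Lemma \ref{lem:GLIndexLemma} on $P$, rule out the switch-edge and uniform-fat-vertex alternatives (each forces an $x$-cycle of $G_Q$ supported on $L$), and identify the surviving dual sink or source face --- with a final argument that it represents the trivial $L$-type --- as the monochromatic disk face sought.
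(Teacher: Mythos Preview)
Your approach is correct but takes a genuinely different route from the paper. The paper argues constructively: it sets $J\subset L$ to be the vertices of $L$ antiparallel to $\Sigma$ (the positive vertices of $L$ when $\Sigma=\varnothing$), observes that the absence of $x$-cycles in $G_Q\cap D$ forces $G_P$ to satisfy $\mathbb{P}(V_P,J)$ --- every fat vertex of $G_P$ carries an edge at a $J$-label leading to a parallel vertex --- and then an innermost-cycle argument on these edges produces a cycle in $G_P$ on parallel vertices bounding a disk containing only vertices of that sign, hence a face of $G_P(L)$ representing the trivial type. No dual-graph or index machinery is invoked.

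Your Euler count on $\Gamma^{*}$ is a valid alternative, heavier but in keeping with the Gordon--Luecke framework used elsewhere in the paper. Two remarks sharpen it. First, the difficulty you anticipate in the $\Sigma$-bounded case dissolves once you apply Corollary~\ref{cor:NoXEdgesLeavingImpliesXCycle} to the set $J$ of vertices in $L$ \emph{antiparallel} to $\Sigma$ (nonempty by Theorem~\ref{thm:NoNewGreatXCycle}) rather than to those parallel to it: every edge leaving $J$ is trapped in $D$ and lands on a vertex parallel to $\Sigma$, hence antiparallel to $J$, so for each $x_0\in V_P$ you obtain a negative edge of $G_P(L)$ meeting fat vertex $x_0$ at a $J$-label; since each such edge has its unique $J$-labelled end at $x_0$, these $p$ edges are pairwise distinct, yielding $m\ge p$ with no need to special-case the label of $\Sigma$. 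This is exactly the paper's set $J$, used there for the same reason. Second, your reduction to the connected case is harmless because the bound already holds componentwise: each fat vertex $x_0$ lying in a component $C$ contributes its own negative edge $e_{x_0}\in C$, so $m_C\ge p_C>p_C-2$.
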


\begin{proof}
 Let $J\subset L$ be all vertices of opposite sign from $\Sigma$ (or, if
 $\Sigma = \emptyset$, choose $J$ to be the positive elements of $L$).
 $\Sigma$ cannot be a great new $x$-cycle by Theorem 
 \ref{thm:NoNewGreatXCycle}, so $J\neq \emptyset$.
 
 Suppose first that for some vertex $x_0$ in $G_P$, every label $y\in J$
 on $x_0$ is adjacent to an edge leading to an antiparallel vertex.
 These edges must all lead to parallel labels, implying that the
 labels on both ends of each edge are in $J$.
 We thus have an $x_0$-cycle in $D$, contrary to our assumption.
 
 Therefore for each vertex $x$ in $G_P$ there exists a label $y(x)\in J$
 such that the edge $e(x)$ leaving vertex $x$ at label $y$ goes to a
 vertex parallel to $x$.
 Note that the label at the other end of $e(x)$ must have opposite sign
 from $y(x)$ by the parity rule.
 If $E = \{e(x)\}$, $G_P(E)$ will have circuits on every connected
 component.
 Every connected component will have uniform sign (since edges
 $e(x)$ connect parallel vertices).
 An innermost circuit on parallel vertices will therefore bound a disk
 $E$ with only vertices parallel to the circuit.
 $E$ thus contains a disk face representing the trivial $L$-type.
\end{proof}

\begin{lem}
\label{lem:TrivialInduction}
 Suppose $L$ is an arbitrary set of vertices of $G_Q$ and $L'\subset L$.
 If $G_P(L')$ represents the trivial $L'$-type, then $G_P(L)$ represents the
 trivial $L$-type.
\end{lem}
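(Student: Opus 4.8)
The plan is to argue by monotonicity: if $G_P(L')$ has a disk face representing the trivial $L'$-type, that same face already certifies that $G_P(L)$ represents the trivial $L$-type, because the trivial type imposes no constraints beyond the face being a sink or source, and adding the extra labels of $L\setminus L'$ only subdivides corners without changing this. Concretely, suppose $G_P(L')$ represents the trivial $L'$-type, so there is a star $T'$ with $[T']$ the trivial $L'$-type and a disk face $D'$ of $G_P(L')$ that is a sink or source under the dual orientation inherited from $T'$. Since the $L'$-type is trivial, every $L'$-interval of $T'$ carries the same dual orientation; that is, $T'$ is itself a sink or source vertex. First I would extend $T'$ to a star $T$ with $L(T) = L$ by assigning every $L$-interval of $V(T')$ the same dual orientation as $T'$; then $[T]$ is the trivial $L$-type and $T$ is again a uniform sink or source.

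Next I would compare $G_P(L)$ with $G_P(L')$. The edges of $G_P(L)$ are those incident to a vertex at a label in $L$, which is a superset of the edges of $G_P(L')$; so each disk face of $G_P(L)$ sits inside a disk face of $G_P(L')$ (the extra edges only cut the old faces into smaller pieces). In particular $D'$ contains some disk face $D$ of $G_P(L)$. I would then check that $D$ is a sink or source under the dual orientation on $G_P(L)$ inherited from $T$: each corner of $D$ lies on a single corner of $G_P(L')$ along $\bdd D'$, or in the interior of $D'$, and in either case the dual orientation it inherits from $T$ agrees with the uniform orientation of $T'=T$ — since $T$ is a pure sink or source, every $L$-interval, hence every subcorner of every corner of $G_P(L)$, gets the same dual orientation. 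Because $D'$ is a sink (resp. source) all of whose boundary corners point in (resp. out), every subcorner on $\bdd D$ that comes from $\bdd D'$ points the same way; and any corner of $\bdd D$ arising from an edge of $G_P(L)\setminus G_P(L')$ interior to $D'$ is a corner of a vertex with uniform dual orientation, so it too points the same way. Hence all corners of $\bdd D$ have matching dual orientation, i.e. $D$ is a sink or source, so $G_P(L)$ represents the trivial $L$-type via the face $D$ and the star $T$.

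The one point requiring a little care — and the step I expect to be the main obstacle — is verifying the claim that a disk face of $G_P(L)$ really is contained in a disk face of $G_P(L')$ together with the orientation-matching at corners of $G_P(L)$ that are subcorners of a single $G_P(L')$-corner. This is exactly the bookkeeping built into the definition of the dual orientation generated by a star (an $L(T)$-interval that corresponds to a corner of $G_P(L(T))$ with edges in its interior has each subcorner inherit the orientation of that interval), so once the definitions are unwound there is no real obstruction; the proof is essentially a reduction to the fact that a uniform sink/source star forces every face of any subgraph on a larger label set to again be a sink/source. I would also note that $L'$ nonempty forces $|L'|\ge 1$, and that the existence of the representing face for the trivial type does not need $|L'|\ge 2$, so no hypothesis is lost in passing to the larger set $L$.
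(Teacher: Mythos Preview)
Your argument is correct and is essentially the same as the paper's: take a disk face $E'$ of $G_P(L')$ representing the trivial $L'$-type, then any face $E$ of $G_P(L)$ contained in $E'$ represents the trivial $L$-type. The paper states this in two sentences; you have simply unwound the definitions (extending the uniform star, checking that the extra edges only subdivide $D'$ and that all new corners inherit the same dual orientation), which is exactly the bookkeeping the paper leaves implicit.
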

\begin{proof}
 $G_P(L')$ represents the trivial $L'$-type, and thus has a disk face $E'$
 representing the trivial $L'$-type.
 Any face $E$ of $G(L)$ in the disk $E'$ thus represents the trivial
 $L$-type.
\end{proof}

\begin{lem}
\label{lem:NontrivialInduction}
 Let $\tau$ be a nontrivial $L$-type, with sequence of coherence
 $(T_1,\ldots,T_n)$. Suppose $G_Q\cap D$ contains a cycle
 $\Sigma_0$ on vertices of $A(T_n)$ (respectively $C(T_n)$), such that the
 $\Sigma_0$-disk $D_0\subset D$ contains vertices which are not in
 $A(T_n)$ (respectively $C(T_n)$).
 Let $L_0$ be the $\Sigma_0$-set contained in $D_0$.
 Suppose $G_P(L_0)$ represents $\tau_0$, an inherited $L_0$-type
 of $\tau$ obtained from the above sequence of coherence.
 Then $G_P(L)$ represents $\tau$.
 \end{lem}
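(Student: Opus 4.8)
The plan is to transport the hypothesis that $G_P(L_0)$ represents $\tau_0$ up the coherence sequence, using the derivative and dual-graph machinery of Gordon--Luecke, to conclude that $G_P(L)$ represents $\tau$. First I would reduce to the case in which $\Sigma_0$ lies on vertices of $A(T_n)$, the case of $C(T_n)$ being analogous. Recall the $L_0$-sequence $(R_1,\dots,R_n)$ built from the coherence sequence $(T_1,\dots,T_n)$, with $R_1=T_1$ and $R_i=d_0^{\pm}R_{i-1}$ (where $d_0=d_{L_0}$ and the signs track those of the $d^{\pm}$). By Proposition \ref{prop:GL2.1.2}, $C(T_n)\subseteq C(R_n)$ and $\widetilde{A}(T_n)\supseteq\widetilde{A}(R_n)$, so each corner $R_n\vert I$ over an $L_0$-interval $I$ is not ugly, and the inherited type $\tau_0$, together with the colouring data $\eta_c=\mathrm{char}(C(T_n),V(T_n))$, $\eta_a=-\mathrm{char}(\widetilde{A}(R_n),V(R_n))$, is well defined; these very inclusions are what single out $\tau_0$ as the correct $L_0$-type to lift.

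Next let $\Gamma=\Gamma(T_1)$ be $G_P(L)$ carrying the dual orientation inherited from $T_1$, so that a dual sink or source face of $\Gamma^{*}$ is precisely a disk face witnessing ``$G_P(L)$ represents $\tau$''. Let $\delta_0$ denote the derivative relative to the inner graph $G_P(L_0)$, and form the composition of $\delta_0^{+}$'s and $\delta_0^{-}$'s matching the signs of the coherence sequence. I would then check, inductively, that at each stage the exceptional labels of the current graph lie in the relevant clockwise-switch set, so that Lemma \ref{lem:GL2.2.2} applies and identifies the $i$-th derivative with $\Gamma(R_{i})$; after $n-1$ steps this gives $\Gamma(R_n)$, whose underlying graph restricted to the disk bounded by $\Sigma_0$ is $G_P(L_0)$ carrying exactly the dual orientation for which $\tau_0$ has all corners good. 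The hypothesis that $G_P(L_0)$ represents $\tau_0$ then produces a dual sink or source face $E$ of $\Gamma(R_n)^{*}$, and $n-1$ applications of Lemma \ref{lem:GL2.4.1} push this down to a dual sink or source face of $\Gamma^{*}$ contained in $E$. Hence $G_P(L)$ represents $\tau$. (This amounts to an iterated, $L_0$-sequence version of Corollary \ref{cor:GL2.4.2}.)

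The step I expect to be the main obstacle is the inductive verification, at every stage of this derivative tower, of the hypothesis of Lemma \ref{lem:GL2.2.2} — that the exceptional labels produced by enlarging $G_P(L_0)$ to $G_P(L)$ stay inside the relevant clockwise-switch set. This is exactly where the assumption that $D_0$ is a $\Sigma_0$-disk with $\Sigma_0$ supported on $A(T_n)$ gets used, since it controls how the extra labels of $L\setminus L_0$ meet the faces involved. Intertwined with this is the bookkeeping needed to match the sign conventions in the definition of $\tau_0$ — the choices $\epsilon(I)$ making each $\epsilon(I)(R_n\vert I)$ good, and the flip in $\eta_a=-\mathrm{char}(\widetilde{A}(R_n),V(R_n))$ — against the dual orientation that $\Gamma(R_n)$ actually carries, so that ``$E$ has all corners good'' really is ``$E$ is a dual sink or source''.
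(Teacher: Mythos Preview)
Your outline has the right skeleton --- set up the $L_0$-sequence $(R_i)$, identify $\Gamma(R_i)=\delta_0\Gamma(R_{i-1})$ via Lemma~\ref{lem:GL2.2.2}, then push a sink/source down through Lemma~\ref{lem:GL2.4.1} --- but there is a genuine gap at the step where you pass from ``$G_P(L_0)$ represents $\tau_0$'' to ``there is a dual sink or source face of $\Gamma(R_n)^*$''. By definition of the inherited type, a face $E$ of $G_P(L_0)$ representing $\tau_0$ gives you only that each $\Gamma_n$-corner of $E$ is \emph{good} (after multiplying by the appropriate sign $\eta$). A good corner is not a uniformly oriented corner: it can contain switches, and the face $E$ need not be a sink or source in $\Gamma(R_n)^*$. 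You describe closing this gap as ``bookkeeping'' and ``matching sign conventions'', but it is the substantive part of the proof. The paper (following Gordon--Luecke) bridges it as follows: starting from $E$ with all $\Gamma_n$-corners good, repeatedly split $E$ along switch edges of $\Gamma_n$, using Lemma~\ref{lem:GL2.6.1} to retain a piece $F$ whose corners are still all good; once no switch edges remain in $F$, Lemma~\ref{lem:GL2.7.1} gives $\mathrm{ind}_{\Gamma_n}\partial F\le 0$, and then Lemma~\ref{lem:GLIndexLemma} (with no positive-index dual faces left) produces an actual sink or source of $\Gamma_n^*$ inside $F$. Only then does Lemma~\ref{lem:GL2.4.1} apply.

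A smaller slip: you reduce to $\Sigma_0$ on $A(T_n)$ and then assert the exceptional labels of $G_P(L_0)$ lie in the clockwise-switch set. Planarity of $G_Q$ forces the exceptional labels to lie among the \emph{vertices of $\Sigma_0$}, so if $\Sigma_0$ sits on $A(T_n)$ they lie in $A(T_n)$, not $C(T_n)$, and Lemma~\ref{lem:GL2.2.2} does not apply as stated. The paper handles this by replacing $T_n$ with $\overline{T_n}$ (swapping $A$ and $C$) so that $\Sigma_0$ is on $C(T_n)$; then the chain $\{\text{exceptional labels}\}\subset\{\text{vertices of }\Sigma_0\}\subset C(T_n)\subset C(T_i)\subset C(R_i)$ goes through for every $i$.
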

 
 \begin{proof}
 We assume that the vertices of $\Sigma_0$ are in $C(T_n)$ by possibly replacing
 $T_n$ with $\overline{T_n}$. This can be achieved by replacing $d_n$ by its
 negative in the sequence of coherence, and replacing $\tau_0$ with $-\tau_0$.
 Let $E$ be a face of $G_P(L_0)$ that represents $\tau_0$. Then there exists
 $\eta = \pm$ such that if a corner of $E$ at a vertex $v$ is in the
 $L_0$-interval $I$, then sign $v$ $= \eta \epsilon(I)$ (recall the definition
 of $\eta$ from the end of section \ref{ssec:GLSetup}). Let $J$ be the
 subinterval of $I$ corresponding to the corner, and let $(R_1,\ldots,R_n)$ be
 the $L_0$-sequence for $\tau$ obtained from $(T_1,\ldots,T_n)$.
   
  \emph{The remainder of the proof is the same as the end of the proof of
  Lemma 2.8.2 in \cite{GordonLuecke89}, but is included here for completeness.}
  
 \begin{claim}
 \label{CLA:GoodCorner}
 $\eta(\text{sign }v)R_n\vert J$ is good.
 \end{claim}
 \begin{proof}
  Since $\eta(\text{sign }v) = \epsilon(I)$, we are done if $J$ is an
  $L_0$-interval, by the definition of $\tau_0$. So assume this is not
  the case.
  Then $J$ is a subinterval of an $L_0$-interval $I$ with at least
  one endpoint of $J$ an exceptional label in $G_P(L_0)$.
  Since the exceptional labels are
  contained in $C(T_n)$, $\epsilon(I) = +$ by Lemma \ref{lem:GL2.6.2}.
  Now $R_n\vert J$ is good by Lemma \ref{lem:GL2.6.1}(i).
  \end{proof}

  Let $\Gamma_i = \Gamma(R_i)$, $1\leq i \leq n$, and let
  $\delta_0 = \delta_{G(L_0)}$. Since
  $$\{\text{exceptional labels of }G(L_0)\} \subset
  \{\text{vertices of }\Sigma_0\} \subset C(T_n) \subset
  C(T_i)\subset C(R_i),$$ for $1\leq i \leq n$, we have from
  Lemma \ref{lem:GL2.2.2} that
  $\Gamma_i = \delta_0 \Gamma_{i-1}$, $2 \leq i \leq n$.
  
  By Claim \ref{CLA:GoodCorner}, $\eta X$ is good for any corner $X$
  of $E$ in $\Gamma_n$.  
  A face of $\Gamma^*_n \cap E$ of positive
  index corresponds to a switch-edge $e$ of $\Gamma_n \cap E$. Since the
  endpoints of $e$ have opposite characters, one endpoint of $e$ will be a 
  double-sided switch, the other single-sided. Split $E$ along $e$,
  i.e., let $E = E_1 \cup_e E_2$. By Lemma \ref{lem:GL2.6.1}, for some
  $i\in \{1,2\}$, $\eta X$ is good for every corner $X$ of $E_i$ in
  $\Gamma_n$.
  We repeat until we get a disk $F\subset E$,
  bounded by edges of $\Gamma_n$, such that
  \begin{enumerate}[(a)]
   \item \label{itm:GoodCorners}for each $\Gamma_n$-corner $X$ of $F$,
   $\eta X$ is good;
   \item \label{itm:NoPosIndexFaces} $\Gamma^*_n$ has no faces of positive
   index in $F$.
  \end{enumerate}
  
  Lemma \ref{lem:GL2.7.1} and \ref{itm:GoodCorners} imply that
  $ind_{\Gamma_n}\bdd F \leq 0$.
  Lemma \ref{lem:GLIndexLemma} and \ref{itm:NoPosIndexFaces} imply that
  $\Gamma^*_n$ has a sink or source in $F$.
  Since $\Gamma_i = \delta_0\Gamma_{i - 1}$, $2\leq i \leq n$,
  By Lemma \ref{lem:GL2.4.1}, $\Gamma^*_1 = \Gamma(T_1)^*$ has a
  sink or source at a dual vertex in $F$. Thus $G_P(L)$ represents $\tau$.
  
 \end{proof}

 \begin{thm}
  \label{thm:ScharlemannCycleInNewXCycle}
  $G_P(L)$ represents all $L$-types or there exists a Scharlemann cycle in
  int $D$.
 \end{thm}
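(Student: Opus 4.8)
The plan is to prove the statement in contrapositive form: assuming int $D$ contains no Scharlemann cycle, I will show that $G_P(L)$ represents every $L$-type. The argument is an induction on $|L|$ that reassembles the inductive skeleton of Gordon and Luecke's proof, with Lemmas~\ref{lem:TrivialBaseCase} and~\ref{lem:NontrivialBaseCase} as base cases and Lemmas~\ref{lem:TrivialInduction} and~\ref{lem:NontrivialInduction} as the inductive steps. Throughout, any disk bounded by a new $x$-cycle of $G_Q$ that is used in the induction has at least two vertices of $G_Q$ in its interior, since $G_Q$ has no new great $x$-cycle (Theorem~\ref{thm:NoNewGreatXCycle}) and no isolated vertex (Lemma~\ref{lem:NoIsolatedVertices}); in particular $|L|\geq 2$.

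The base step, and indeed the first case to handle at every stage, is when $G_Q\cap D$ contains no $x$-cycle. Then Lemma~\ref{lem:TrivialBaseCase} gives that $G_P(L)$ represents the trivial $L$-type, and for any nontrivial $L$-type $\tau$ with sequence of coherence $(T_1,\dots,T_n)$ there is a fortiori no $x$-cycle on $A(T_n)$ or $C(T_n)$, so Lemma~\ref{lem:NontrivialBaseCase} gives that $G_P(L)$ represents $\tau$. This covers $|L|=2$ outright: a new $x$-cycle inside $D$ would force at least two vertices of $G_Q$ into the interior of the subdisk it bounds, which is impossible since that subdisk omits the cycle's own vertices; and any Scharlemann cycle on vertices of $L$ would lie in int $D$, against our hypothesis.

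For the inductive step, assume $|L|>2$ and that $G_Q\cap D$ contains an $x$-cycle; since a Scharlemann cycle on vertices of $L$ would lie in int $D$, every such cycle is a new $x$-cycle. Fix an $L$-type $\tau$. If $\tau$ is trivial, choose a new $x$-cycle $\Sigma_0$ in $G_Q\cap D$, let $D_0\subset D$ be the subdisk it bounds, and let $L_0$ be the vertices of $G_Q$ in int $D_0$; then $2\leq |L_0|<|L|$ and $D_0$ has the form required of the inductive hypothesis, so either int $D_0\subset$ int $D$ contains a Scharlemann cycle (impossible) or $G_P(L_0)$ represents all $L_0$-types, in particular the trivial one, and Lemma~\ref{lem:TrivialInduction} then gives that $G_P(L)$ represents the trivial $L$-type. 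If $\tau$ is nontrivial with sequence of coherence $(T_1,\dots,T_n)$ and $G_Q\cap D$ has no $x$-cycle on $A(T_n)$ or $C(T_n)$, Lemma~\ref{lem:NontrivialBaseCase} finishes it; otherwise there is such a cycle $\Sigma_0$, which we may arrange to lie on vertices of $C(T_n)$ by replacing $T_n$ with $\overline{T_n}$ (and correspondingly $\tau_0$ with $-\tau_0$) if needed. Since $\Sigma_0$ is a new $x$-cycle that is not great, the subdisk $D_0\subset D$ it bounds contains in its interior a vertex not parallel to $\Sigma_0$, hence a vertex outside $C(T_n)$; this is exactly the hypothesis of Lemma~\ref{lem:NontrivialInduction}. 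Applying the inductive hypothesis to $D_0$ (again $2\leq|L_0|<|L|$, and no Scharlemann cycle inside) gives that $G_P(L_0)$ represents all $L_0$-types, in particular the inherited $L_0$-type $\tau_0$ attached to this coherence sequence, and Lemma~\ref{lem:NontrivialInduction} then yields that $G_P(L)$ represents $\tau$. Since $\tau$ was arbitrary, $G_P(L)$ represents every $L$-type.

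I expect the main obstacle to be entirely one of bookkeeping rather than a new idea: one must check that the new $x$-cycle produced in the nontrivial case genuinely lies on $A(T_n)$ or $C(T_n)$ so that Lemma~\ref{lem:NontrivialInduction} applies after the possible passage to $\overline{T_n}$ and $-\tau_0$; that its bounding subdisk meets the complement of $C(T_n)$, which is precisely where Theorem~\ref{thm:NoNewGreatXCycle} is used; that $2\leq|L_0|<|L|$ so the induction is well founded (this is where Lemma~\ref{lem:NoIsolatedVertices} and Theorem~\ref{thm:NoNewGreatXCycle} enter again); and that the inherited $L_0$-type $\tau_0$ is well defined, which holds because $R_n\vert I$ is never ugly. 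All the analytic content --- the index estimates and Hoffman's Lemma~\ref{lem:HoffmanLemma} --- has already been packaged into the cited lemmas, so the work here is purely organizational.
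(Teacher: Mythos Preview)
Your proof is correct and follows essentially the same approach as the paper's: both argue by contradiction, assume no Scharlemann cycle in int $D$, and descend through nested subdisks bounded by new $x$-cycles, invoking Lemmas~\ref{lem:TrivialBaseCase} and~\ref{lem:NontrivialBaseCase} at the bottom and Lemmas~\ref{lem:TrivialInduction} and~\ref{lem:NontrivialInduction} to climb back up. The only cosmetic difference is the choice of induction variable --- you induct on $|L|$, while the paper inducts on the number of new $x$-cycles in int $D$ --- and you spell out the trivial-type step more explicitly than the paper's one-line appeal to Lemma~\ref{lem:TrivialInduction}.
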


 \begin{proof}
  Assume no Scharlemann cycle exists in int $D$. We proceed by induction on
  the number of new $x$-cycles.
  If no new $x$-cycles exist in int $D$, then by Lemmas
  \ref{lem:NontrivialBaseCase} and \ref{lem:TrivialBaseCase} $G_P(L)$
   represents all types.
  
  We now make the inductive assumption that any disks containing fewer new
  $x$-cycles than $D$ represent all types. Suppose that $G_P(L)$ does not
  represent the $L$-type $\tau$. By Lemma \ref{lem:TrivialInduction}, $\tau$ is
  nontrivial. Let $(T_1,\ldots,T_n)$ be a sequence of coherence for $\tau$.
  By Lemma \ref{lem:NontrivialBaseCase}, int $D$ contains an $x$-cycle on
  $A(T_n)$ or $C(T_n)$. By Lemma \ref{lem:NontrivialInduction} and the
  inductive assumption, $G_P(L)$ represents $\tau$.
 \end{proof}
 
 Although the proof of the following theorem is in \cite{Parry90}, the
 explanation of the homological implication is in Section 4 of \cite{Gordon97}.
 \begin{thm}[Consequence of \cite{Parry90}]
 \label{thm:AllTypesTorsion}
 If $G_P$ represents all $V_Q$-types, then
 $H_1(M(\gamma))$ contains a nontrivial torsion element.
 \end{thm}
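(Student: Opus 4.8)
The plan is to invoke \cite{Parry90} directly: this statement, in the form used in the knot complement theorem, is due to Parry, and the translation into homological language is explained in Section~4 of \cite{Gordon97}. The first (and essentially only) real step is then to check that our objects satisfy the hypotheses of Parry's theorem, which is a matter of unwinding definitions: $G_P\subset\check{P}$ is the planar graph whose fat vertices are the copies of $\gamma$ on $\partial M$ and whose edges are the arcs of $\check{P}\cap\check{Q}$, carrying their labels, parities and signs; the level sphere $Q\subset M(\gamma)$ cuts $M(\gamma)$ into two $3$-balls and so supplies a handle (equivalently, CW) decomposition of $M(\gamma)$; and ``$G_P$ represents all $V_Q$-types'' is exactly the combinatorial hypothesis Parry's argument consumes.

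To keep the statement from being a pure black box I would also sketch its homological mechanism. From the handle decomposition of $M(\gamma)$ induced by $Q$ one gets a short chain complex computing $H_*(M(\gamma))$ in which the $1$-chains correspond to the punctures of $Q$ (equivalently to $V_Q$) and the $2$-chains to sub-disks of $\check{P}$. A $V_Q$-type, read as a dual orientation on $G_P(V_Q)$, determines a $1$-cycle in this complex, and a disk face $F$ of $G_P(V_Q)$ that is a sink or source for a star $T$ with $[T]=\tau$ contributes, via its corners and edges, a $2$-chain whose boundary realizes the cycle attached to $\tau$ modulo boundaries. Ranging over all faces, the assumption that \emph{every} $V_Q$-type is represented pins the relevant $1$-dimensional homology down to a nontrivial, finitely generated torsion group; Parry's analysis identifies this with a torsion subgroup of $H_1(M(\gamma))$.

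The step I expect to be genuinely hard — and the reason I would quote Parry rather than rebuild the argument — is making the correspondence between $V_Q$-types and homology classes precise and, in particular, showing that representing all types yields a \emph{nonzero} class rather than everything collapsing to $0$; this is the delicate orientation and sign bookkeeping of \cite{Parry90}. Finally I would record how we use the theorem: here $M(\gamma)\cong S^3$, so $H_1(M(\gamma))=0$ has no nontrivial torsion, and therefore $G_P$ does not represent all $V_Q$-types. Applying Theorem~\ref{thm:ScharlemannCycleInNewXCycle} with $D$ the complement of a small disk disjoint from $G_Q$ then produces a Scharlemann cycle in $G_Q$, which is Theorem~\ref{thm:SchCycleExistence}.
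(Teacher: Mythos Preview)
Your proposal is correct and matches the paper's approach exactly: the paper provides no proof at all, merely citing \cite{Parry90} for the result and Section~4 of \cite{Gordon97} for the homological interpretation, then immediately applying it via $M(\gamma)\cong S^3$ to conclude Theorem~\ref{thm:SchCycleExistence}. Your sketch of the homological mechanism goes beyond what the paper offers, but your core strategy---quote Parry, point to Gordon for the translation, and record the application---is precisely what the paper does.
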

 
 Theorem \ref{thm:AllTypesTorsion} contradicts $M(\gamma) = S^3$,
 concluding the proof of Theorem \ref{thm:SchCycleExistence}.
\section{Rotation-Free Graph}
\label{sec:RotFreeGraph}
Assume $\tau$ is an $L$-type, $T$ a coherent star with $[T] = \tau$.
Let $G_P(L)$ have the dual orientation inherited from $\tau$.
Let $\mathbb{F}$ be a collection of faces of $G_P(L)$. We define
$Rev(G_P(L), \mathbb{F})$ to be the graph obtained by reversing the dual
orientations on each $F\in \mathbb{F}$.
We define a fat vertex graph with dual orientation to be
\define{representative}
if it has a face that is a source or a sink with respect to the dual
orientation.

\begin{lem}
\label{lem:ReversingNotRepresentative}
For any collection of faces $\mathbb{F}$, $G_P(L)$ is representative if
and only if $Rev(G_P(L), \mathbb{F})$ is representative.
\end{lem}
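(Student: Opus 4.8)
The plan is to exploit the fact that $Rev(\,\cdot\,,\mathbb{F})$ is an involution which preserves the property of having a source-or-sink face, possibly only interchanging the labels ``source'' and ``sink'' on those faces it touches. First I would record the structural fact that in a fat vertex graph each corner is incident to exactly one face, so the corners of $G_P(L)$ are partitioned by which face they bound. Hence ``reversing the dual orientation on $F\in\mathbb{F}$'' is to be read as flipping the dual orientation of every corner lying on $\bdd F$, and these flips, carried out over distinct faces, do not interfere with one another. In particular, for any face $F'$ of $G_P(L)$: if $F'\notin\mathbb{F}$ then no corner of $\bdd F'$ is reversed in passing to $Rev(G_P(L),\mathbb{F})$, while if $F'\in\mathbb{F}$ then every corner of $\bdd F'$ is reversed.

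For the forward implication I would argue as follows. Suppose $G_P(L)$ is representative, witnessed by a face $F'$ all of whose corners carry the same dual orientation; say they are all sources (the sink case is identical). If $F'\notin\mathbb{F}$, the corners of $F'$ are unchanged, so $F'$ is still a source face of $Rev(G_P(L),\mathbb{F})$. If $F'\in\mathbb{F}$, every corner of $F'$ is flipped, so in $Rev(G_P(L),\mathbb{F})$ all corners of $F'$ become sinks, and $F'$ is a sink face. Either way $Rev(G_P(L),\mathbb{F})$ has a source or sink face and is therefore representative. For the converse I would observe that flipping the dual orientation of a corner twice restores it, so $Rev(Rev(G_P(L),\mathbb{F}),\mathbb{F})=G_P(L)$; applying the forward implication with $Rev(G_P(L),\mathbb{F})$ in place of $G_P(L)$ then yields that $G_P(L)$ is representative whenever $Rev(G_P(L),\mathbb{F})$ is.

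I do not expect any genuine obstacle here: there are no index estimates or combinatorial counts involved. The only point that needs care is pinning down that ``reversing on a face'' is an operation on the corners bounding that face (and not on edges, or on vertex-corners somehow shared between two faces); once one notes that each corner belongs to a unique face, the involution-plus-symmetry argument above is immediate.
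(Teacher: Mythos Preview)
Your proposal is correct and is essentially the same argument as the paper's own proof, only spelled out in more detail: the paper simply observes that any source or sink face in one graph corresponds to a source or sink face in the other. Your explicit case split on whether the witnessing face lies in $\mathbb{F}$, together with the involution remark for the converse, is exactly the content behind that one-line observation.
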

\begin{proof}
 Any source or sink in one graph will correspond to a source or sink in
 the other.
\end{proof}

\begin{prop}
\label{prop:EvenDegreeTwoColor}
 Let $G$ be a graph. If every vertex of $G$ has even degree, the faces
 of $G$ can be two-colored such that each edge has each color on exactly
 one side.
\end{prop}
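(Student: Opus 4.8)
The plan is to argue by induction on the number of edges of $G$, using the classical fact that a graph all of whose vertices have even degree is Eulerian, hence contains a cycle as soon as it has any edge. Throughout, $G$ is understood to be embedded in $S^2$ (in the paper's applications it is a subgraph of a fat vertex graph in $P$ or $Q$). First I would dispose of trivialities: isolated vertices affect neither the edge set nor the face structure, so I may delete them and assume every vertex has degree at least $2$. I would also record the observation that an Eulerian graph has no bridges --- if $e = uv$ were a bridge, the component of $G - e$ containing $u$ would have exactly one vertex of odd degree, contradicting the handshake lemma --- so on $S^2$ every edge of $G$ borders two \emph{distinct} faces; these are the two ``sides'' that the coloring must distinguish. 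The base case $|E(G)| = 0$ is immediate: there is a single face, which we color arbitrarily.

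For the inductive step, since every vertex has degree at least $2$ the graph $G$ contains a cycle $Z$, which (its vertices being distinct) is embedded in $S^2$ as a simple closed curve. Let $G'$ be $G$ with the edges of $Z$ deleted. Each vertex lying on $Z$ loses exactly $2$ from its degree and all others are unchanged, so $G'$ is again Eulerian and has strictly fewer edges; by the inductive hypothesis $G'$ has a proper face $2$-coloring $\chi'$. By the Jordan curve theorem $Z$ separates $S^2$ into two open disks $D_1$ and $D_2$. Every face of $G$ is connected and disjoint from $Z$, hence lies in exactly one $D_i$, and it lies inside a unique face of $G'$ (passing to a subgraph only coarsens faces). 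I would then define a coloring $\chi$ of the faces of $G$: a face sitting in $D_1$ inside a $G'$-face $\Phi'$ keeps the color $\chi'(\Phi')$, while a face sitting in $D_2$ takes the opposite color.

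It remains to check that $\chi$ is proper, i.e.\ that the two faces of $G$ flanking any edge $e$ receive opposite colors. If $e$ is not an edge of $Z$, then $e$ is an edge of $G'$ and its two $G$-faces lie in a common disk $D_i$ and inside the two faces of $G'$ that flank $e$ in $G'$; those two $G'$-faces are distinct because $G'$, being Eulerian, has no bridge, so $\chi'$ already assigns them opposite colors and $\chi$ inherits this. If $e$ is an edge of $Z$, then the two $G$-faces flanking $e$ lie on opposite sides of $Z$, one in $D_1$ and one in $D_2$; but a short arc crossing $e$ transversally and meeting nothing else witnesses that, once the edges of $Z$ are removed, these two local sides belong to the \emph{same} face of $G'$, so the ``reverse on $D_2$'' rule makes their $\chi$-colors opposite. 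This completes the induction.

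The step I expect to need the most care is exactly this last verification: keeping straight the three-way bookkeeping among the faces of $G$, the faces of $G'$, and the two Jordan disks, and in particular confirming that deleting the edges of $Z$ genuinely merges the two sides of each edge of $Z$ while not otherwise merging faces that lie on a common side. One must also ensure the ``no bridge, hence two distinct faces per edge'' fact is available not just for $G$ but at every level of the recursion, which it is, precisely because $G'$ stays Eulerian. (A slicker but less self-contained alternative is homological: $\sum_e e$ is a $\mathbb{Z}/2$-cycle in $G$ since $\partial(\sum_e e) = \sum_v \deg(v)\, v \equiv 0$, and as $H_1(S^2;\mathbb{Z}/2) = 0$ it bounds a $2$-chain, i.e.\ a set of faces; coloring those faces black makes each edge's coefficient in the boundary equal to $1$, so exactly one of its two faces is black. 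I would keep this in reserve if the inductive bookkeeping becomes unwieldy.)
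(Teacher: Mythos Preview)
Your proof is correct. Both your argument and the paper's proceed by induction on the number of edges, deleting a cycle at each step; the difference lies in which cycle is deleted and how the $2$-coloring is extended. The paper deletes the boundary $\partial D$ of a chosen disk face $D$: since one side of this cycle is the single face $D$ itself, the coloring of $G'$ extends to $G$ simply by assigning $D$ the color opposite to that of the $G'$-face into which $D$ and its neighbors have merged --- though the paper leaves even this step implicit. You instead delete an arbitrary simple cycle $Z$, invoke the Jordan curve theorem, and apply the ``flip on one side'' rule, which necessitates the careful two-case verification you supply (including the repeated use of the no-bridge observation for $G'$). The paper's choice of cycle buys a shorter extension step at the cost of a terser argument; yours is longer but fully explicit, and the homological alternative you sketch is also valid and arguably the cleanest of the three.
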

\begin{proof}
 The statement clearly holds if $G$ has zero edges.
 Given a $G$ satisfying the hypothesis that has more than zero edges, select
 a disk face $D$ of $G$ (possibly containing vertices in its interior) and
 remove all edges in $\bdd D$ to obtain $G'$. $G'$ has strictly fewer edges,
 but still satisfies the hypothesis, proving the statement by induction.
\end{proof}

We assume that $G_P(L)$ is not representative, $[T]$ is nontrivial,
 and that neither $A(T)$
nor $C(T)$ have a new $x$-cycle. Let $E_{switch}$ be
the switch edges of $G_P(L)$. Then by Lemma
\ref{lem:HoffmanLemma} and Proposition \ref{prop:EvenDegreeTwoColor},
$G_P(E_{switch})$ can be two-colored black and white, with every edge
between a white face and a black face. $G_P(L)$ inherits a coloring from
$G_P(E_{switch})$. Let $\mathbb{F}_{black}$ be the black faces of $G_P(L)$.

\begin{lem}
$Rev(G_P(L), \mathbb{F}_{black})$ has no switch edges and is
not representative.
\end{lem}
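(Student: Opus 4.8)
The claim has two parts: first, that reversing the dual orientations on all black faces kills every switch edge; second, that the resulting graph with dual orientation is still not representative. The plan is to treat these separately, with the first part essentially bookkeeping about how reversing a face's dual orientation affects the corners along its boundary, and the second part an immediate application of Lemma~\ref{lem:ReversingNotRepresentative}.

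For the first part, recall that $G_P(E_{switch})$ was two-colored so that every switch edge separates a white face from a black face; the coloring of $G_P(L)$ is inherited from this. Fix a switch edge $e$ of $G_P(L)$. The key point is that the notion of ``switch edge'' is local to $e$ and depends only on the dual orientations of the two corners of $G_P(L)$ adjacent to $e$ on either side. In the original graph these two corners lie in faces of $G_P(L)$ that sit, respectively, inside a white face and a black face of $G_P(E_{switch})$ (since $e$ is an edge of $G_P(E_{switch})$ separating those colors, and a face of $G_P(L)$ refining it inherits that color). After applying $Rev(G_P(L),\mathbb{F}_{black})$ we flip exactly the dual orientation of the black-side corner and leave the white-side corner alone. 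I would then check, from the definition of a switch edge — both ends meeting vertices at switch labels of the same orientation, equivalently the dual-orientation pattern around $e$ being the ``positive'' pattern of Figure~\ref{fig:SwitchEdge} — that flipping precisely one of the two adjacent corners turns a switch edge into a non-switch edge (a negative edge or a zero-index edge). Conversely, any edge of $G_P(L)$ that was \emph{not} a switch edge but lies on the boundary between a black and a white face: I need to confirm it does not \emph{become} a switch edge. This is where the two-coloring of $G_P(E_{switch})$ does the work — a non-switch edge of $G_P(L)$ that happens to separate a black from a white face of $G_P(E_{switch})$ is possible only if that edge lies in the interior of a single face of $G_P(E_{switch})$ \emph{refined} by the $G_P(L)$-faces, hence has the same color on both sides, hence its two adjacent corners are either both flipped or both unflipped, so its ``switch/non-switch'' status is preserved. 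Combining: after the reversal, no edge is a switch edge.

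For the second part there is nothing to prove beyond quoting Lemma~\ref{lem:ReversingNotRepresentative}: we assumed $G_P(L)$ is not representative, and that lemma says representativeness is invariant under reversing the dual orientation on any collection of faces, in particular $\mathbb{F}_{black}$; hence $Rev(G_P(L),\mathbb{F}_{black})$ is not representative.

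The main obstacle is the first part, specifically the case analysis showing that an edge lying on a black/white boundary of $G_P(E_{switch})$ is a switch edge of $G_P(L)$ before the reversal if and only if it is not a switch edge after, while every other edge's status is unchanged. The delicate point is that faces of $G_P(L)$ are refinements of faces of $G_P(E_{switch})$, so ``the color on each side of a $G_P(L)$-edge $e$'' must be read off from which face of $G_P(E_{switch})$ contains the relevant $G_P(L)$-corner; once that is pinned down, the rest is the local orientation-flip computation. I would organize this by first reducing to the two corners adjacent to a fixed edge $e$, then splitting into the case $e \in E_{switch}$ (opposite colors on the two sides, exactly one corner flipped, switch $\to$ non-switch) and the case $e \notin E_{switch}$ (same color on both sides — because $e$ lies inside a single $G_P(E_{switch})$-face — so both corners treated alike and the status, non-switch, is preserved).
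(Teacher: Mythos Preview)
Your proposal is correct and follows essentially the same approach as the paper's proof: quote Lemma~\ref{lem:ReversingNotRepresentative} for non-representativeness, and for the absence of switch edges argue that edges in $E_{switch}$ have opposite colors on their two sides (so exactly one adjacent corner is flipped, killing the switch) while edges not in $E_{switch}$ lie interior to a single face of $G_P(E_{switch})$ and hence have the same color on both sides (so their non-switch status is preserved). The paper's version is terser, simply asserting that non-switch edges are adjacent to two faces of the same color; your refinement argument is exactly the justification behind that assertion.
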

\begin{proof}
 $G = Rev(G_P(L), \mathbb{F}_{black})$ is not representative by Lemma
 \ref{lem:ReversingNotRepresentative}. Since each switch edge of $G_P(L)$
 has a black face on exactly one side, the dual orientations on exactly
 one side of every switch edge of $G_P(L)$ gets reversed in $G$.
 
 All edges in $G_P(L)$ which are not switch edges are adjacent to two
 faces of the same color, and therefore are not switch edges in $G$.
\end{proof}

We will henceforth define $RF_P = Rev(G_P(L), \mathbb{F}_{black})$.
We will define
\begin{align}
A(RF) &= \{(l, v)\in V_Q \times V_P \vert l\text{ on } v
\text{ is an anticlockwise switch label, in }RF_P\},\\
C(RF) &= \{(l, v)\in V_Q \times V_P \vert l\text{ on } v
\text{ is a clockwise switch label, in }RF_P\}.
\end{align}

\begin{lem}
\label{lem:NoAHCircuits}
$RF_P$ does not have a directed cycle with each tail at a label from
$A(RF)$. Similarly for $C(RF)$.
\end{lem}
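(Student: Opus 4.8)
The plan is to argue by contradiction, exploiting the key structural fact that $RF_P$ was constructed precisely to have \emph{no switch edges}. Suppose $RF_P$ contains a directed cycle $\sigma$ with every tail at a label from $A(RF)$ (the clockwise case is identical after mirroring). First I would observe, exactly as in the proofs of Lemma \ref{lem:CircuitImpliesRepsTrivialType}, Lemma \ref{lem:NoCycles}, and Lemma \ref{lem:NontrivialInduction}, that since the special vertices $1,2$ are joined in $G_Q$ by edges of a Scharlemann cycle, $\sigma$ cannot separate $1$ from $2$; hence there is a unique $\sigma$-disk $D$ not containing the special vertices, and I may take $\sigma$ innermost with this property. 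Let $V_D$ be the $\sigma$-set in $D$.

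Next I would rule out that $V_D$ contains a vertex antiparallel to $\sigma$. By the parity rule, every vertex of $RF_P$ that sits at an $A(RF)$ label is matched across that edge with a vertex at a $C(RF)$ label (an anticlockwise-switch label can only be glued to a label that is \emph{not} an anticlockwise switch of the same orientation, since $RF_P$ has no switch edges), and $RF_P$ has $\mathbb{P}$-type information coming from $\mathbb{A}(V,V_r)$ in $G_Q$ in the way it is used throughout Section 3. Concretely: for each vertex $w\in V_D$ of sign opposite to $\sigma$, the defining property of $A(RF)$ together with the absence of switch edges forces an edge $e_w$ incident to $w$ at an $A(RF)$ label whose other end is a vertex of the same sign as $w$ lying in $D$; by Proposition \ref{prop:AsManyEdgesAsVertsImpliesCircuit} these edges contain an oriented sub-cycle among the antiparallel vertices of $D$, contradicting that $\sigma$ is innermost. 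Hence all vertices of $V_D$ are parallel to $\sigma$. Since in $RF_P$ every fat vertex meets a switch edge — wait, no: $RF_P$ has \emph{no} switch edges, so this forces $V_D=\emptyset$, i.e. $D$ is a face of $RF_P$ (possibly a face of $G_P(L)$ after removing interior non-$\sigma$ edges).

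Finally I would compute the index of $\bdd D$ in $RF_P$ to contradict non-representativity. For each corner $X$ of $D$ at a vertex $x_0\in\bdd\sigma$, the edge $e_0$ leaving $x_0$ at an $A(RF)$ label meets $D$; a direct check of the two cases — $\operatorname{ind}(e_0)=0$ giving $\operatorname{ind}(X)\le 0$, and $\operatorname{ind}(e_0)=-1$ giving $\operatorname{ind}(X)\le 1$ — is identical to the computation in the proof of Lemma \ref{lem:NoCycles} (Figure \ref{fig:CalculatingDegree}), so $\operatorname{ind}(e_0)+\operatorname{ind}(X)\le 0$ at each corner and therefore $\operatorname{index}(\bdd D)\le 0$. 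Applying Lemma \ref{lem:GLIndexLemma} to $D$ as a disk face of a subgraph of $RF_P$: $RF_P$ has no switch edges and no fat vertex with uniform dual orientation could occur inside $D$ (there are no vertices in $D$), so there must be a dual source or sink face in $D$, i.e. $RF_P$ is representative — contradicting the standing assumption. Hence no such cycle $\sigma$ exists, and the statement for $A(RF)$ follows; the statement for $C(RF)$ follows by replacing $RF_P$ with its mirror, which interchanges $A$ and $C$.

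The main obstacle I anticipate is the second paragraph: making precise \emph{why} an $A(RF)$-edge at an antiparallel vertex of $D$ must have its far end also inside $D$ and parallel to that vertex. This rests on combining the parity rule with the no-switch-edge property of $RF_P$ and with whatever $\mathbb{A}$-hypothesis is in force when Lemma \ref{lem:NoAHCircuits} is invoked downstream; I would want to state that hypothesis explicitly (mirroring the $\mathbb{A}(V,V_r)\Rightarrow\mathbb{P}(V_r,V)$ step used in Lemmas \ref{lem:NoCycles} and \ref{lem:APropertyImpliesCircuitsOrTrees}) so that the innermost-cycle contradiction goes through cleanly.
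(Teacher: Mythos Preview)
Your argument has a genuine gap at the step where you conclude $V_D=\emptyset$. In the proof of Lemma~\ref{lem:NoCycles} the passage from ``all vertices in $V_D$ are parallel to $\sigma$'' to ``$V_D=\emptyset$'' uses that every vertex is joined to an antiparallel vertex by a \emph{switch edge}; you notice mid-sentence that $RF_P$ has no switch edges, but then assert that this ``forces $V_D=\emptyset$'' anyway. It does the opposite: without switch edges there is nothing preventing parallel vertices from sitting inside $D$, so the emptying step simply fails. The earlier paragraph ruling out antiparallel vertices is also shaky---you yourself flag that making the $\mathbb{P}$-property precise here is an obstacle---but even granting it, the argument stalls at the parallel case.

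The paper avoids this entirely by not attempting to empty $D$. It applies Lemma~\ref{lem:GLIndexLemma} to \emph{both} disks bounded by $\sigma$ (the index computation you sketch in your final paragraph works symmetrically on each side). Since $RF_P$ has no switch edges and, by standing assumption, no source/sink faces, each of the two disks must contain a fat vertex with uniform dual orientation. The key input you are missing is then Lemma~\ref{lem:HoffmanLemma}(2)(d): in $RF_P$ only the special vertices $1,2$ can have uniform dual orientation. Since $1$ and $2$ are joined by Scharlemann-cycle edges they lie on the same side of $\sigma$, so the other disk cannot contain a uniform fat vertex---contradiction. This is both shorter and sidesteps the need to control $V_D$ at all.
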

\begin{proof}
By Lemma \ref{lem:GLIndexLemma}, any such directed cycle $\sigma$ would 
bound faces each containing a switch edge, a source/sink face, or a
source/sink fat vertex. Since $RF_P$ has no switch edges or source/sink 
faces, each face bounded by $\sigma$ contains a source/sink face.

However by Lemma \ref{lem:HoffmanLemma}, only the special vertices will be
source/sink vertices in $RF_P$. Since the special vertices have edges
between them (from any Scharlemann cycle), they must be on the same side
of $\sigma$, a contradiction.
\end{proof}

The following can be seen as a generalization of Theorem \ref{thm:HoffmanTree}. Note that if $T$ is a trivial type, this section
does not apply, but we get similar trees from Theorem
\ref{thm:TrivialTypeSDisks}.
\begin{cor}
\label{cor:RFTrees}
$RF_P$ has disjoint $(A(RF)\to 1)$ and $(A(RF)\to 2)$ trees. Similarly for
$C(RF)$.
\end{cor}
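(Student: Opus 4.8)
The plan is to mirror the one-line deduction of Theorem~\ref{thm:HoffmanTree} from its cycle-obstruction lemma, but now using the rotation-free analogue Lemma~\ref{lem:NoAHCircuits} in place of Lemma~\ref{lem:NoCycles}. First I would package the relevant part of $RF_P$ as a directed graph $\mathbb{G}$: take $V_P$ as the vertex set, and as directed edges exactly those edges of $RF_P$ incident to some label in $A(RF)$, each oriented so that its tail lies at that $A(RF)$-label. This orientation is well defined, since an edge incident to two anticlockwise switch labels would be a switch edge and $RF_P$ has none, so each edge of $\mathbb{G}$ has a unique $A(RF)$-end. By Lemma~\ref{lem:NoAHCircuits}, $\mathbb{G}$ is acyclic.

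The structural input I need is that every regular vertex of $RF_P$ is incident to at least one edge at an $A(RF)$-label; equivalently, every regular vertex has out-degree at least $1$ in $\mathbb{G}$, so the only possible sinks of $\mathbb{G}$ lie in $\{1,2\}$. This is exactly the place Lemma~\ref{lem:HoffmanLemma} is used: since $[T]$ is nontrivial and neither $A(T)$ nor $C(T)$ carries a new $x$-cycle, alternative~(2) of that lemma holds, and part~(d) forces each regular vertex to still carry an anticlockwise switch label in $RF_P$, once one tracks how the black/white reversal acts on the dual-orientation pattern around a vertex. Granting this, a directed path in $\mathbb{G}$ starting at any regular vertex cannot repeat a vertex (acyclicity) and cannot stop at a regular vertex, so, $V_P$ being finite, it terminates at $1$ or at $2$.

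It then remains to split this reachability data into two vertex-disjoint trees. Process the vertices of $\mathbb{G}$ in reverse topological order; declare $1$ to be the root of the first tree and $2$ the root of the second (giving these two no chosen out-edge); and for each remaining vertex $v$ — whose $\mathbb{G}$-out-neighbours have all been processed — pick one out-edge $v\to w$ and put $v$, together with that edge, into whichever tree already contains $w$. Each chosen edge is an edge of $RF_P$ with tail at an $A(RF)$-label, and inside each tree the chosen edges form a directed path from every vertex to the root (the unique member of that tree with no chosen out-edge, since every non-root vertex is regular and hence did get one), so we obtain an $(A(RF)\to 1)$ tree and an $(A(RF)\to 2)$ tree; they are disjoint, and together cover $V_P\supset V_r$, because every vertex was assigned to exactly one. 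The $C(RF)$ statement is identical, using the clockwise halves of Lemmas~\ref{lem:HoffmanLemma} and~\ref{lem:NoAHCircuits}.

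The main obstacle is the structural input in the second paragraph — verifying that reversing the black faces does not wipe out every anticlockwise switch at some regular vertex — since that is what converts the acyclicity of $\mathbb{G}$ into the desired spanning forest. Once it is in hand, the rest (acyclicity from Lemma~\ref{lem:NoAHCircuits}, the ``follow out-edges to a sink'' argument, and the topological-sort bookkeeping that separates the two trees) is routine; it is precisely the content compressed into the one-sentence proof of Theorem~\ref{thm:HoffmanTree}.
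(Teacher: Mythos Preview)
Your proposal is correct and follows essentially the same approach as the paper, which also cites exactly Lemma~\ref{lem:HoffmanLemma} and Lemma~\ref{lem:NoAHCircuits}; you have simply unpacked the one-line proof into its constituent steps (acyclicity, out-degree at regular vertices, path-following, tree assembly). The structural input you flag as the main obstacle---that regular vertices retain an anticlockwise switch after reversal---is precisely the assertion ``only the special vertices will be source/sink vertices in $RF_P$'' already made inside the proof of Lemma~\ref{lem:NoAHCircuits}, so you can cite that directly rather than re-deriving it from part~(d).
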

\begin{proof}
This follows immediately from Lemma \ref{lem:HoffmanLemma} and Lemma
\ref{lem:NoAHCircuits}.
\end{proof}

Suppose now that $\tau$ is a $V_Q$-type.
We can $2$-color the entire graph $G_P$, since $q$ is always even.
Let $\mathbb{F}$ be the black faces. The parity rule allows
us to partition the corners of $T$ into black corners and white corners.
Thus $Rev(G_P(L), \mathbb{F})$ defines a star $\widehat{T}$, the
\define{conjugate} of $T$. The above results
therefore imply that $G_P$ represents $\tau$ if and only if $G_P$
represents $\widehat{\tau} = [\widehat{T}]$.
Although the conjugate is defined up to a choice of coloring,
$G_P$ represents $\widehat{T}$ if and only if $G_P$ represents
$\overline{(\widehat{T})}$, so generally the choice is not specified.
A coherent $V_Q$-star $T$ is
\define{bicoherent} if $\widehat{T}$ is also coherent, or (equivalently)
if $B_+(T)$ and $B_-(T)$ are each of uniform sign.
Note that for some $s\in \{+,-\}$, $A(T) = B_s(\widehat{T})$ and
$C(T) = B_{-s}(\widehat{T})$. We are therefore describing a symmetry
between the $A(T), C(T)$ sets and the $B_+(T), B_-(T)$ sets.
 \begin{figure}[h]
  \centering
  \includegraphics[width=0.8\textwidth]{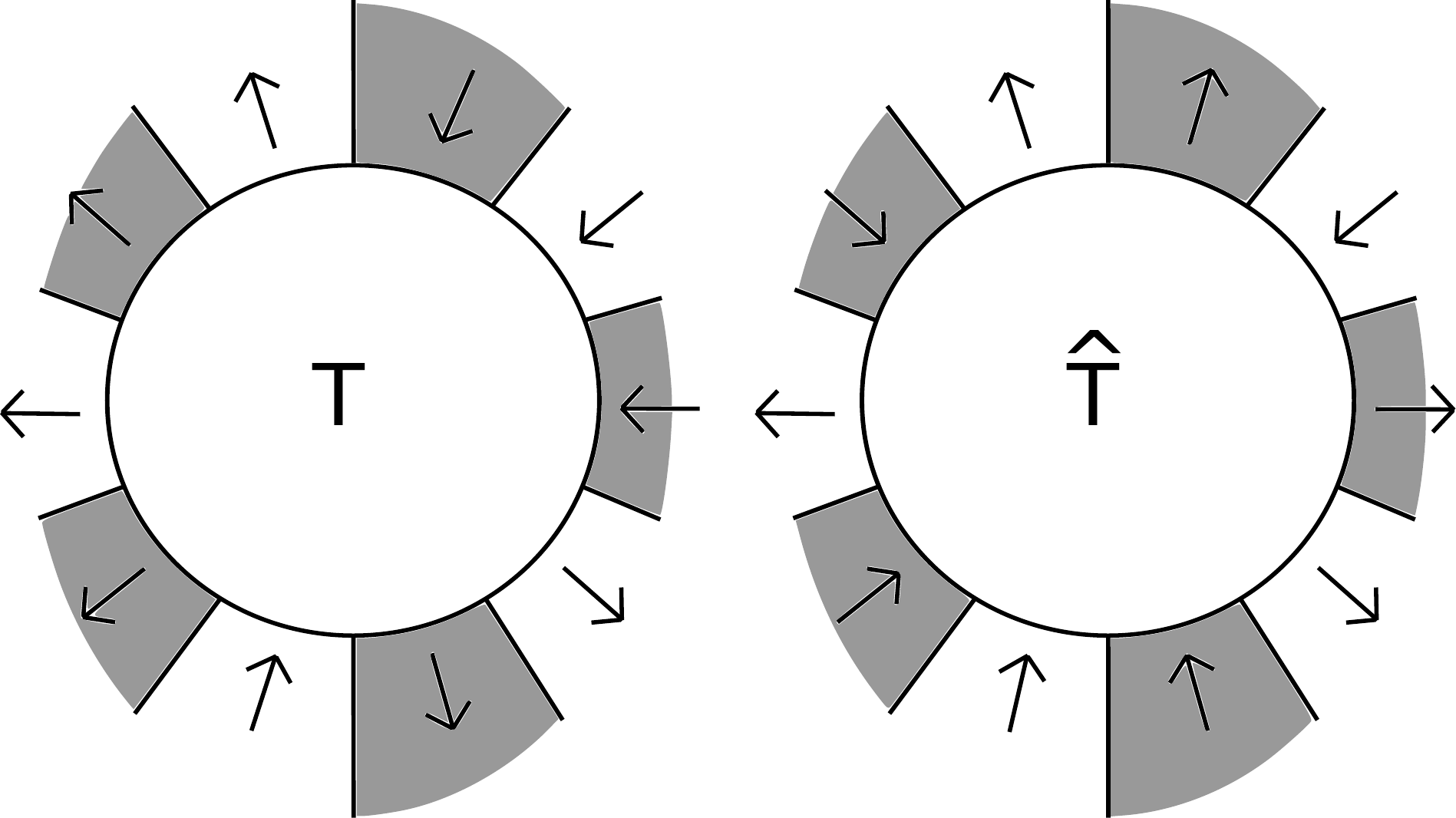}
  \caption{The star $T$ and its conjugate $\widehat{T}$.}
  \label{fig:ConjugateType}
 \end{figure}

\begin{thm}
Let $T$ be bicoherent with $[T]$ a nontrivial $V_Q$-type, such that
$[\widehat{T}]$ is also nontrivial. If $G_P$ has no new
$x$-cycle on $A(T)$ or $C(T)$, then $G_P$ has a new
$x$-cycle on $B_+(T)$ or $B_-(T)$.
\end{thm}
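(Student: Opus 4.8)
The plan is to reduce the statement to a single application of Hoffman's Lemma (Lemma~\ref{lem:HoffmanLemma}) to the conjugate star $\widehat T$, exploiting the symmetry between $\{A(T),C(T)\}$ and $\{B_+(T),B_-(T)\}$ provided by bicoherence. Throughout I work under the standing hypotheses of this section, so in particular $G_P(V_Q)$ is not representative.

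First I would set up the reduction. Suppose, for contradiction, that $G_P$ has no new $x$-cycle on $B_+(T)$ or $B_-(T)$. Since $T$ is bicoherent, $\widehat T$ is coherent, so all elements of $C(\widehat T)$ share a parity and all elements of $A(\widehat T)$ share a parity; moreover, by the symmetry between the sets $\{A(T),C(T)\}$ and $\{B_+(T),B_-(T)\}$ noted before the theorem (applied to $\widehat T$ in place of $T$, using that conjugating $\widehat T$ returns $T$ up to reversal and the choice of two-coloring), we have $\{A(\widehat T),C(\widehat T)\}=\{B_+(T),B_-(T)\}$. Because $G_P(V_Q)$ is not representative it does not represent $[T]$, hence by the conjugate relation (a consequence of Lemma~\ref{lem:ReversingNotRepresentative}) it does not represent $[\widehat T]$. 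As $[\widehat T]$ is nontrivial and $\vert V_Q\vert\ge 2$, Lemma~\ref{lem:HoffmanLemma} applies to $\widehat T$. Its first alternative yields a new $x$-cycle in $G_Q$ whose vertices lie in $A(\widehat T)$ or in $C(\widehat T)$, that is, in $B_+(T)$ or $B_-(T)$ --- contradicting the assumption --- so the structural alternative (2)(a)--(h) of Lemma~\ref{lem:HoffmanLemma} must hold for $\widehat T$.

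Next I would extract the parallel structure for $T$ itself: the theorem's hypothesis that there is no new $x$-cycle on $A(T)$ or $C(T)$, together with Lemma~\ref{lem:HoffmanLemma} applied to $T$, forces alternative (2)(a)--(h) for $T$ as well. Combining the two conclusions, and invoking Lemma~\ref{lem:ScharlemannOrder} for a common Scharlemann order $n$, one obtains: the sets $A(T),C(T),B_+(T),B_-(T)$ partition $V_Q$; each is of uniform sign; each is connected in $G_Q$ (by (h)); each carries a $1,2$-Scharlemann cycle of order $n$ (by (e)), hence has at least $n$ vertices; and exactly $p-2$ edges leave each of them (by (g)). The aim is then to promote each of these four sets to a great web; Corollary~\ref{cor:MinWebSize} would then force $\vert A(T)\vert,\vert C(T)\vert,\vert B_+(T)\vert,\vert B_-(T)\vert\ge 4$, so $q=\vert A(T)\vert+\vert C(T)\vert+\vert B_+(T)\vert+\vert B_-(T)\vert\ge 16$, contradicting $q\le 2b\le 10$ (alternatively, the contradiction can be repackaged through Theorems~\ref{thm:GreatWebDivisibility} and~\ref{thm:SchCycleNotWeb}).

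I expect this last step to be the main obstacle: a connected, uniform-sign vertex set with exactly $p-2$ edges leaving it is a great web only after one checks that it sits inside a disk of $Q$ containing no other vertices. The natural approach is an innermost-disk argument: if the induced subgraph on, say, $A(T)$ contained a cycle, this would be a cycle on parallel vertices bounding a subdisk of $Q$; passing to an innermost such subdisk and applying Corollary~\ref{cor:NoXEdgesLeavingImpliesXCycle} together with the non-existence of new great $x$-cycles (Theorem~\ref{thm:NoNewGreatXCycle}) should either trap a new great $x$-cycle --- impossible --- or show that the only vertices of $G_Q$ inside that subdisk already belong to $A(T)$, which lets one absorb the face and exhibit $A(T)$ as a web. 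Everything preceding this step is a bookkeeping assembly of the conjugate relation and two applications of Lemma~\ref{lem:HoffmanLemma}.
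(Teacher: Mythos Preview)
Your reduction to $\widehat T$ and the application of Lemma~\ref{lem:HoffmanLemma} to both $T$ and $\widehat T$ is sound: under the standing hypotheses of the section, $G_P$ does not represent $[T]$, hence not $[\widehat T]$, and your identification $\{A(\widehat T),C(\widehat T)\}=\{B_+(T),B_-(T)\}$ is correct. The problem is the endgame. You aim for $q\ge 16$ and then invoke $q\le 2b\le 10$, but $b\le 5$ is \emph{not} a hypothesis of this theorem. The statement is a general structural fact about bicoherent $V_Q$-types, proved in Section~\ref{sec:RotFreeGraph} and only later applied (via Corollary~\ref{cor:BicoherentImpliesCycle}) inside the proof of Theorem~\ref{thm:CCFiveBridgeKnots}. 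So even granting your great-web claim for all four sets, no contradiction follows, and your argument is circular with respect to the paper's main theorem.

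There is also a secondary gap in the step you flagged: connectedness in $G_Q$, uniform sign, and $\lvert[A(T),V_Q\setminus A(T)]\rvert=p-2$ do not by themselves place $A(T)$ inside a disk of $Q$ containing no other vertices; your innermost-disk sketch does not supply this, since a cycle in the subgraph on $A(T)$ could bound a disk containing vertices of the other three sets without producing any new great $x$-cycle. But this is moot given the first issue.

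The paper's proof avoids all of this by working internally to $RF_P$. From Corollary~\ref{cor:RFTrees} one has disjoint $(A(RF)\to 1)$ and $(A(RF)\to 2)$ trees, and likewise $(C(RF)\to j)$ trees, together spanning $V_P$. The assumption of no new $x$-cycle on $B_+(T)$ or $B_-(T)$ forces, at every leaf of an $A(RF)$-tree, at least two incoming $C(RF)$-edges (one from a $B_+$ label, one from a $B_-$ label), so every $A(RF)$-leaf has at least two children in the $C(RF)$-trees; the symmetric statement holds with $A$ and $C$ swapped. This gives a purely combinatorial leaf-counting contradiction, with no appeal to the size of $q$.
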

\begin{proof}
Suppose $G_P$ has no new $x$-cycle on $B_+$ or $B_-$. Suppose $v$ is a
leaf in the $(A(RF)\to 1)$ tree which exists by Corollary \ref{cor:RFTrees}.
This means that no
$B_+$ or $B_-$ label on $v$ can be adjacent to an edge going to an
$A(RF)$ label. But $B_+$ and $B_-$ are each of uniform sign, and
$G_Q$ cannot have a new $v$-cycle, so at least one edge leaving
$v$ at a $B_+$ label must meet another
vertex at a $C(RF)$ label, and similarly for $B_-$. Therefore two edges
of the $(C(RF)\to 2)$ tree come into $v$. Hence $v$ is a vertex of the 
$(C(RF)\to 2)$ tree with at least two children.

Since $v$ was an arbitrary leaf of the $(A(RF)\to 1)$ tree, every leaf of
the $(A(RF)\to 1)$ and $(A(RF)\to 2)$ trees must have two children in the
$(C(RF)\to j)$ trees. Thus if the
$(A(RF)\to i)$ trees together have $r$ leaves, the $(C(RF)\to j)$ trees
will together have more than $r$ leaves. But every leaf
of the $(C(RF)\to j)$ tree can similarly be shown to have at least
two children in the $(A(RF)\to i)$ trees, implying
that the $(A(RF)\to i)$ trees have more than $r$ leaves, a contradiction.
\end{proof}

\begin{cor}
\label{cor:BicoherentImpliesCycle}
Let $T$ be bicoherent with $[T]$ a nontrivial $V_Q$-type, such that
$\widehat{T}$ is also nontrivial, and such that
$G_P$ has no new $x$-cycle on $A(T)$ or $C(T)$. Then there is a new
$x$-cycle on $A(\widehat{T})$ or $C([\widehat{T}])$.
\end{cor}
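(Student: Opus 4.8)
The plan is to read this Corollary as a one-line translation of the preceding Theorem through the conjugation symmetry set up in this section. First I would check that the hypotheses of the Theorem hold verbatim under the Corollary's hypotheses: $T$ is bicoherent, $[T]$ is a nontrivial $V_Q$-type, $[\widehat{T}]$ is nontrivial, and $G_P$ has no new $x$-cycle on $A(T)$ or $C(T)$. These are exactly the assumptions of the Theorem, so the Theorem applies and yields a new $x$-cycle whose vertices lie in $B_+(T)$ or in $B_-(T)$.

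Second, I would appeal to the identity recorded just after the definition of bicoherence: for some $s\in\{+,-\}$ one has $A(T)=B_s(\widehat{T})$ and $C(T)=B_{-s}(\widehat{T})$ as subsets of $V_Q$. Applying this with $\widehat{T}$ in the role of $T$, and using that $\widehat{\widehat{T}}$ coincides with $T$ up to the choice of two-coloring and the replacement $T\mapsto\overline{T}$ — neither of which alters the unordered pair $\{B_+,B_-\}$ of vertex subsets — I obtain $\{B_+(T),B_-(T)\}=\{A(\widehat{T}),C(\widehat{T})\}$ as an unordered pair of vertex sets of $G_Q$. Hence the new $x$-cycle supplied by the Theorem has its vertices in $A(\widehat{T})$ or in $C(\widehat{T})$, which is the assertion.

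The only delicate point, and it is a mild one, is making the identification $\{B_+(T),B_-(T)\}=\{A(\widehat{T}),C(\widehat{T})\}$ precise enough that the statement ``there is a new $x$-cycle on $A(\widehat{T})$ or $C(\widehat{T})$'' is unambiguous: since the conjugate star is only defined up to a coloring choice and up to the symmetry $T\mapsto\overline{T}$, I would verify that the resulting pair of vertex subsets of $G_Q$ does not depend on that choice, so that the conclusion transports cleanly. Everything else is the routine dictionary between a star and its conjugate already developed in this section, so I expect this step to be essentially bookkeeping rather than the crux.
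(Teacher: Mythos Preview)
Your proposal is correct and is essentially the paper's own proof: the paper's argument is the single line ``Taking the conjugate of a type swaps $B_s$ with $A$ and $B_{-s}$ with $C$ for some choice of sign $s$,'' which is exactly your step two applied after invoking the preceding Theorem. Your extra care about the coloring/$\overline{T}$ ambiguity is more than the paper bothers with, but the underlying idea is identical.
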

\begin{proof}
 Taking the conjugate of a type swaps $B_s$ with $A$ and $B_{-s}$ with $C$ 
 for some choice of sign $s$.
\end{proof}

\section{The Cabling Conjecture}

\begin{thm}
\label{thm:CCFiveBridgeKnots}
The cabling conjecture is true for $b$-bridge knots with $b\leq 5$.
\end{thm}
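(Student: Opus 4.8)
The plan is to assume the theorem fails and contradict the width bound. Suppose $k\subset S^3$ is a nontrivial knot with bridge number $b\le 5$ that violates the Cabling Conjecture but admits a reducing surgery $M(\pi)$. Put $k$ in thin position, choose an essential sphere $P\subset M(\pi)$ meeting $k'$ transversely and minimally, let $Q\subset M(\gamma)\cong S^3$ be the level sphere produced by Proposition \ref{prop:GabaiGL}, and form the intersection graphs $G_P$ and $G_Q$. Since $k$ is not a cable knot we have $p=|P\cap k'|>2$, and since $k$ is thin, $q=|Q\cap k|\le 2b\le 10$; so it suffices to prove $q>10$.

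First I would carry out the reduction already packaged in Zufelt's results. By Theorem \ref{thm:SchCycleExistence} (whose proof in \cite{GordonLuecke89} in fact produces a great web) $G_Q$ contains a great web $\Lambda$, and by the remark following Lemma \ref{lem:EdgesInSDisks} $\Lambda$ contains a Scharlemann cycle; let $n\ge 2$ be its order and $v=|V(\Lambda)|$. All vertices of $\Lambda$ share a sign and the two signs occur equally often among the $q$ vertices of $G_Q$, so $v\le q/2\le 5$. Theorems \ref{thm:GreatWebDivisibility} and \ref{thm:SchCycleNotWeb} give $n\mid v$ and $n\ne v$, hence $v\ge 2n\ge 4$; together with $v\le 5$ this forces $v=4$, $n=2$, and $q\in\{8,10\}$. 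Thus the theorem reduces to excluding Scharlemann order $2$ together with $q\le 10$.

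For that configuration I would invoke the representation machinery developed above. Because $M(\gamma)\cong S^3$ has torsion-free first homology, Theorem \ref{thm:AllTypesTorsion} forbids $G_P$ from representing every $V_Q$-type; fix an unrepresented type $\tau$. Using Lemmas \ref{lem:CircuitImpliesRepsTrivialType} and \ref{lem:TrivialInduction} I would reduce to $\tau$ nontrivial, and after replacing $T$ by the terminal star of a sequence of coherence (and its conjugate if needed) I may take a bicoherent star $T$ with $[T]=\tau$ and $[\widehat T]$ nontrivial. Running $\tau$ — and, through the conjugate, $\widehat\tau$ — through Lemma \ref{lem:HoffmanLemma} and Corollary \ref{cor:BicoherentImpliesCycle} then forces, for the relevant members of the partition $V_Q=A(T)\cup C(T)\cup B_+(T)\cup B_-(T)$, one of two features: either a \emph{web-like set} (connected in $G_Q$, with exactly $p-2$ edges leaving, carrying a $1,2$-Scharlemann cycle), which by an extension of the divisibility in Theorem \ref{thm:GreatWebDivisibility} has at least $2n=4$ vertices; or a new $x$-cycle on that member, which is never great (Theorem \ref{thm:NoNewGreatXCycle}) and whose disk $D_0$ cannot have $G_P$ representing all $D_0$-types — otherwise Lemma \ref{lem:NontrivialInduction}, applied to $T$ or to $\widehat T$, would make $G_P$ represent $\tau$ — so by Theorem \ref{thm:ScharlemannCycleInNewXCycle} and the web-extraction of \cite{GordonLuecke89}, $D_0$ contains a further great web of at least $4$ vertices, of the sign opposite the $x$-cycle. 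Since the four classes exhaust the at most ten vertices of $G_Q$, and great webs of a common sign have disjoint vertex sets, totalling the vertices occupied by these forced structures (all with Scharlemann order $2$ by Lemma \ref{lem:ScharlemannOrder}) would yield $q>10$, a contradiction; the Cabling Conjecture would then hold for all $b\le 5$.

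The main obstacle is the bookkeeping of the last step: splitting into cases according to whether $T$ and $\widehat T$ land in alternative (1) or (2) of Lemma \ref{lem:HoffmanLemma}, tracking signs so that the webs and web-like sets are tallied without overlap, and disposing of the degenerate cases in which $\tau$ or $\widehat\tau$ is trivial or no bicoherent star exists — there I would fall back on the $(s)$-set analysis of Lemmas \ref{lem:SDiskProperties}--\ref{lem:EdgesInSDisks}, the trees of Theorems \ref{thm:TrivialTypeSDisks} and \ref{thm:HoffmanTree}, and the rotation-free graph of Section \ref{sec:RotFreeGraph}. Establishing the divisibility bound for web-like sets (not merely genuine great webs) is the other place where real work is needed; with both in hand the vertex count closes and $q>10$ follows, contradicting $q\le 2b\le 10$ and completing the proof.
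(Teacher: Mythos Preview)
Your overall strategy matches the paper's: assume an unrepresented $V_Q$-type exists, extract enough web-like structures in $G_Q$ from the representation machinery, and count vertices to force $q>10$. However, two of the steps you flag as ``obstacles'' are in fact the heart of the argument, and the paper resolves them differently from what you sketch.

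First, the reduction ``replace $T$ by the terminal star of a sequence of coherence (and its conjugate if needed) [so that] $T$ is bicoherent'' is not available. The terminal star $T_n$ is coherent but has label set $L(T_n)\subsetneq V_Q$ when $n>1$, and the conjugate $\widehat{T}$ is defined only for full $V_Q$-stars (it depends on the global $2$-colouring of $G_P$, which requires the parity coming from all $q$ labels). So you cannot in general pass to a bicoherent star representing the same (or an equivalent) unrepresented type. The paper handles this not by forcing bicoherence but by a four-way case split on the original $\tau$: trivial (ruled out directly), coherent nontrivial with no ``two-sided'' cycle (reduced via the conjugate to one of the next two cases), incoherent with no two-sided cycle, and the two-sided--cycle case; the last case is then handled by a recursive descent into the two $\sigma$-disks until one reaches base cases.

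Second, the divisibility bound for ``web-like sets'' is not proved in the paper and is not needed. In the paper's incoherent case the extra hypothesis (no cycle on $A(T_n)$ or $C(T_n)$ bounds two nontrivial disks) combined with Lemma~\ref{lem:HoffmanLemma}(2)(g),(h) forces $A(T_n)$ and $C(T_n)$ to be genuine great webs, so Corollary~\ref{cor:MinWebSize} applies directly. In the two-sided--cycle case the paper instead recurses: inside each $\sigma$-disk one repeats the whole analysis for the inherited type, eventually reaching either a nontrivial coherent sub-type (giving two great webs via Lemma~\ref{lem:HoffmanLemma}) or a trivial sub-type (giving one great web via Theorem~\ref{thm:TrivialTypeSDisks}); a short argument using Lemma~\ref{lem:DeltaCornersHaveDisjointLabels} disposes of the minimal configuration $q=2v+2$. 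Your proposal would need an independent proof that the sets in Lemma~\ref{lem:HoffmanLemma}(2) satisfy the $2n$ lower bound, which is not supplied by Zufelt's results as stated.

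In short, the plan is on the right track, but the two points you identify as bookkeeping are exactly where the paper's substantive case analysis lives; without the recursion into $\sigma$-disks (Lemma~\ref{lem:NontrivialInduction} iterated) and the separate treatment of the trivial type (the paper's Theorem~6.2), the vertex count does not close.
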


\begin{proof}
Recall that by Theorem \ref{thm:AllTypesTorsion}, if $G_P$ represents all
$V_Q$-types, then $H_1(M(\gamma))$ has torsion. But $M(\gamma)$ is $S^3$,
so $G_P$ cannot represent all $V_Q$-types. We will show that if a $V_Q$-type
$\tau$ exists which $G_P$ does not represent, then $q > 2v + 2$, where
$v$ is the number of vertices in the smallest great web in $G_Q$.
By Corollary \ref{cor:MinWebSize}, $v\geq 4$, so $q > 10$.

If $G_P$ does not represent the $V_Q$-type $[T] = \tau$, then $\tau$ falls
in one of the following cases:
\begin{enumerate}
 \item \label{itm:trivType} $\tau$ is the trivial $V_Q$-type;
 \item \label{itm:coherentNontriv} $\tau$ is coherent and nontrivial,
 and no cycle on $A(T)$ ($C(T)$) bounds two disks each containing
 vertices which are not in $A(T)$ ($C(T)$);
 \item \label{itm:incoherent} $\tau$ is incoherent with sequence of
 coherence $(T_1,\ldots,T_n)$, and no cycle on $A(T_n)$
 ($C(T_n)$) bounds two disks each containing vertices which are not in
 $A(T_n)$ ($C(T_n)$);
 \item \label{itm:TwoBases} $\tau$ is nontrivial with
 sequence of coherence $(T_1,\ldots,T_n)$, and there
 exists a cycle on $A(T_n)$ ($C(T_n)$) which bounds two disks both
 containing vertices not in $A(T_n)$ ($C(T_n)$).
\end{enumerate}

\begin{thm}
 \label{thm:trivTypeRepped}
 $G_P$ represents the trivial $V_Q$-type.
\end{thm}
\begin{proof}
 Suppose $G_P$ does not represent the trivial $V_Q$-type.
 By Lemma \ref{lem:AntiparallelEdgesInQGiveCircuitInP}, if $G_Q$ contains
 more than $p - 2$ edges between
 antiparallel vertices, then $G_P$ has a cycle on parallel vertices.
 $G_P$ has $\mathbb{P}(V_r, V_Q)$ by Lemma \ref{lem:NoIsolatedVertices},
 so by Lemma \ref{lem:CircuitImpliesRepsTrivialType}, $G_P$ cannot have a
 cycle on parallel vertices.
 Thus $G_Q$ contains at most $p - 2$ edges between antiparallel vertices.
 
 $G_Q$ contains at least two innermost $(s)$-sets, say $V$ and $W$.
 By Lemma \ref{lem:TrivialInduction}, $G_P(V)$ does not represent the
 trivial $V$-type and $G_P(W)$ does not represent the trivial $W$-type.
 Theorem \ref{thm:TrivialTypeSDisks}(\ref{itm:NEdgesLeave}) shows that
 precisely $p-2$ edges leave $V$ and precisely $p-2$ edges leave $W$.
 But since $V$ (respectively $W$) is an $(s)$-set, any edge leaving $V$
 (respectively $W$) is an edge between antiparallel vertices.
 Thus $V$ must contain all (we may assume) positive
 vertices and $W$ must contain all negative vertices.
 By Theorem \ref{thm:TrivialTypeSDisks}(\ref{itm:RegVertsLeave}),
 $V^* = W^* = V_r$.
 This implies that there can be no edge between $V$ and $W$ which meets
 the label $1$ at either end, a contradiction with
 Theorem \ref{thm:TrivialTypeSDisks}(\ref{itm:TreeExistence}).
 Thus $G_P$ must represent the trivial $V_Q$-type.
\end{proof}


Note that by Corollary \ref{cor:BicoherentImpliesCycle}, if $\tau$ is in
case \ref{itm:coherentNontriv}, then $\widehat{\tau}$ is in case
\ref{itm:incoherent} or \ref{itm:TwoBases}.
Since $G_P$ does not represent $\tau$ if and only if $G_P$ does not
represent $\widehat{\tau}$, it is sufficient to consider cases
\ref{itm:incoherent} and \ref{itm:TwoBases}.

Case \ref{itm:incoherent}:
Suppose $\tau$ is incoherent with sequence of coherence $(T_1,\ldots,T_n)$,
with no cycle on $A(T_n)$ ($C(T_n)$) bounding two disks each containing
vertices which are not in $A(T_n)$ ($C(T_n)$). Since $T_n$ is the first
coherent type in the sequence, $n > 1$ and $T_n$ is nontrivial.
By Lemma \ref{lem:HoffmanLemma}, $A(T_n)$ and $C(T_n)$ are great webs,
so $\vert L(T_n) \vert \geq 2v$.
But notice that by the definition of derivative type,
either $A(T_n)\cup C(T_n)\subset A(T)$ or $A(T_n)\cup C(T_n)\subset C(T)$.
Thus $\vert A(T) \vert = \vert C(T) \vert \geq 2v$. This immediately
implies that $q \geq 4v$. In fact, if $q = 4v$, then
$\vert A(T) \vert = \vert C(T) \vert = 2v$, so $\widehat{\tau}$ is the trivial
$V_Q$-type. Theorem \ref{thm:trivTypeRepped} implies that $G_P$ represents
$\widehat{\tau}$ and therefore $\tau$, contrary to our assumptions.
Thus $q \geq 4v + 2$.

Case \ref{itm:TwoBases}:
Suppose $\tau$ is nontrivial and has a sequence of coherence
$(T_1,\ldots,T_n)$ and a cycle $\sigma$ on $A(T_n)$ ($C(T_n)$) such that
$\sigma$ bounds two disks each containing vertices other than $A(T_n)$
($C(T_n)$).
By Corollary \ref{cor:GL2.4.2}, $G_P(L(T_n))$ does not represent $[T_n]$.
Let $D_1$ and $D_2$ be the two $\sigma$-disks, with $\sigma$-sets $V_1$ and
$V_2$. By Lemma \ref{lem:NontrivialInduction},
$G_P(V_1)$ does not represent some $V_1$-type
$\tau_1$ and $G_P(V_2)$ does not represent some $V_2$-type $\tau_2$.


Consider just $D^1 = D_1$ and $V^1 = V_1$, and let $\sigma^1 = \sigma$. 
For $[T^i] = \tau^i$ a $V^i$-type with sequence of coherence
$(T^i_1,\ldots,T^i_n)$, we are interested in defining $\sigma^{i+1}$ as follows,
if possible (i.e. if such cycles exist):
\begin{enumerate}
 \item a new $x$-cycle on $A(T^i_n)$ or $C(T^i_n)$ if $\tau^i$ is nontrivial; or
 \item a cycle on $A(T^i_n)$ ($C(T^i_n)$) such that $\sigma^{i + 1}$ bounds two
 disks each containing vertices other than $A(T_n)$ ($C(T_n)$), if $\tau^i$
 is nontrivial and no new $x$-cycle exists on $A(T^i_n)$ or $C(T^i_n)$; or
 \item a new $x$-cycle, if $\tau^i$ is trivial.
\end{enumerate}
When we find such a $\sigma^{i+1}$, let $D^{i+1}$ be the $\sigma^{i+1}$-disk
contained in $D_1$, and let $V^{i + 1}$ be the corresponding $\sigma^{i+1}$-set.
By Lemmas \ref{lem:TrivialInduction} and \ref{lem:NontrivialInduction},
there is some $V^{i + 1}$-type $\tau^{i + 1}$ such that $G_P(V^{i + 1})$
does not represent $\tau^{i + 1}$.
By finiteness we can find $T^\alpha$, $D^\alpha\subset D_1$,
$\sigma^\alpha$, and $V^\alpha\subset V_1$
such that no qualifying $\sigma^{\alpha+1}$ cycle exists.
In the same way, we can find $T^\beta$, $D^\beta\subset D_2$,
$\sigma^\beta$, and $V^\beta\subset V_2$
such that no qualifying $\sigma^{\beta+1}$ cycle exists.


\begin{lem}
 \label{lem:nontrivInductionDisks}
 $D^\alpha$ and $D^\beta$ are nontrivial $\sigma^\alpha$ and $\sigma^\beta$
 disks, respectively.
\end{lem}
\begin{proof}
 For $\gamma\in \{\alpha, \beta\}$, if $\sigma^\gamma$ is a new $x$-cycle, the result follows immediately from
 Theorem \ref{thm:NoNewGreatXCycle}.
 The only way $\sigma^\gamma$ is not a new $x$-cycle is if $\tau^{\gamma - 1}$
 is nontrivial and there is no new $x$-cycle on
 $A(T^{\gamma - 1}_{n})$ or $C(T^{\gamma - 1}_{n})$.
 Thus the second part of Lemma \ref{lem:HoffmanLemma} applies, and since
 $\sigma^\gamma$ is on vertices of either $A(T^{\gamma - 1}_{n})$ or
 $C(T^{\gamma - 1}_{n})$ (without loss of generality, suppose
 $A(T^{\gamma - 1}_{n})$), at most $p - 2$ edges leave $V^\gamma$.
 
 If $D^\gamma$ is trivial, then every $V^\gamma$ vertex has the same sign.
 By Corollary \ref{cor:NoXEdgesLeavingImpliesXCycle} and Theorem
 \ref{thm:NoNewGreatXCycle}, exactly $p - 2$ edges
 leave $V^\gamma$, one at each label in $V_r$. Thus every edge leaving
 $A(T^{\gamma - 1}_{n})$ goes to a label in $V_r$, a contradiction of
 Theorem \ref{thm:HoffmanTree}.
\end{proof}

We break case \ref{itm:TwoBases} from above
into the following three subcases:
 \begin{enumerate}[(i)]
  \item \label{itm:ZeroTriv} $[T^\alpha_n],[T^\beta_n]$ are both nontrivial;
  \item \label{itm:OneTriv} Exactly one of $[T^\alpha_n],[T^\beta_n]$ is
  nontrivial;
  \item \label{itm:TwoTriv} $[T^\alpha_n],[T^\beta_n]$ are both trivial.
 \end{enumerate}
 
 Case \ref{itm:ZeroTriv}:
 
 Part 2 of Lemma \ref{lem:HoffmanLemma} can be applied to both disks,
 implying the existence of at least $2$ great webs in each.
 This implies that $\vert V^\alpha \vert \geq 2v$ and
 $\vert V^\beta \vert \geq 2v$.
 Since $\Sigma$ is disjoint from both, we have $q > 4v$.

 Case \ref{itm:OneTriv}:
 
 Suppose that $\tau^\alpha_n$ is nontrivial and $\tau^\beta_n$ is trivial.
 Then as in Case \ref{itm:ZeroTriv}, $\vert V^\alpha \vert \geq 2v$.
 Meanwhile, Lemma \ref{lem:nontrivInductionDisks} implies that $V^\beta$
 is antiparallel to $\sigma^\beta$. Thus by Theorem
 \ref{thm:TrivialTypeSDisks}, $V^\beta$ is a great web. Therefore
 by Corollary \ref{cor:MinWebSize}, $\vert V^\beta \vert \geq v$.
 Since $T^\alpha_n$ is coherent, either both $A(T^\alpha_n)$ and
 $C(T^\alpha_n)$ are the same sign, or one is the same sign as $V^\beta$.
 Therefore at least $2v$ vertices must exist of each sign, so $q \geq 4v$.
 
 Case \ref{itm:TwoTriv}:
 
 Suppose both $\tau^\alpha_n$ and $\tau^\beta_n$ are trivial. As in Case
 \ref{itm:OneTriv}, $\vert V^\alpha \vert \geq v$ and
 $\vert V^\beta \vert \geq v$. Since $\sigma$ is disjoint from both,
 $q > 2v$.
 
 Suppose $q = 2v + 2$. Then there is an antiparallel vertex $x^\alpha$ to
 $V^\alpha$ with a loop bounding a disk containing only $V^\alpha$.
 By Theorem \ref{thm:TrivialTypeSDisks}, $G([V^\alpha,\{x^\alpha\}])$
 consists entirely of a negative $(V^\alpha \to \{x^\alpha\}\to 1)$ tree
 and a positive $(V^\alpha \to \{x^\alpha\}\to 2)$ tree.
 Note that the edges between the special vertices $1$ and $2$ split
 $G_P$ into several disks $\Delta_i$. By Lemma 
 \ref{lem:DeltaCornersHaveDisjointLabels}, the negative 
 $(V^\alpha \to \{x^\alpha\}\to 1)$ and the positive
 $(V^\alpha \to \{x^\alpha\}\to 2)$ must be in distinct $\Delta_i$.
 Hence there can be no edges between parallel regular vertices
 in $G_Q$, contradicting the existence of the great web $V^\alpha$.
 Thus $q > 2v + 2$.
 
We have shown that if $G_P$ does not represent a $V_Q$-type $\tau$,
$q > 2v + 2$. By Corollary \ref{cor:MinWebSize}, $v\geq 4$, so
$q > 10$.
Thus either $G_P$ represents all types or the bridge number
$b > 5$. By Theorem \ref{thm:AllTypesTorsion}, $G_P$ cannot represent
all types, hence $b > 5$.
\end{proof}

\begin{rem}
 Note that $q = 12$ is only possible if the type $\tau$ which is not represented
 by $G_P$ falls in case \ref{itm:TwoBases}(\ref{itm:TwoTriv}), and 
 $b = 6$ is only possible if $q = 12$ and every thin presentation of $k$
 is also bridge position.
 Similarly, $q = 14$ (and hence $b = 7$) is only possible if $\tau$ falls
 in case \ref{itm:TwoBases}(\ref{itm:TwoTriv}).
\end{rem}

\printbibliography

\end{document}